\newtheorem{thm}{Theorem}[section]
\newtheorem{cor}[thm]{Corollary}
\newtheorem{prop}[thm]{Proposition}
\newtheorem{lem}[thm]{Lemma}
\newtheorem{defin}[thm]{Definition}
\newcommand*{\house}[1]{%
  \mathord{%
    \mathpalette\@house{#1}%
  }%
}
\newcommand*{\@house}[2]{%
  % #1: math style
  % #2: expression that gets the "house"
  % get the line width of `\overline' in the current math font size
  \dimen@=\fontdimen8 %
      \ifx#1\scriptscriptstyle\scriptscriptfont
      \else\ifx#1\scriptstyle\scriptfont
      \else\textfont\fi\fi
      3 %
  \sbox0{%
    $#1%
      \vrule width\dimen@\relax
      \overline{%
        \kern2\dimen@
        \begingroup % to keep changes of \dimen@ in #2 local
          #2%
        \endgroup
        \kern2\dimen@
      }%
      \vrule width\dimen@\relax
      \mathsurround=1.5\dimen@ % outside margin
    $%
  }%
  % TeX adds an empty space above `\overline', it needs to be
  % removed to get the correct height for the `\vrule's
  \ht0=\dimexpr\ht0-\dimen@\relax
  \dp0=\dimexpr\dp0+2\dimen@\relax
  \vbox{%
    \kern\dimen@ % reinsert previously removed space
    \copy0 %
  }%
}
\newcommand{\CC}{\mathbb C}
\newcommand{\R}{\mathbb R}
\newcommand{\Z}{\mathbb Z}
\newcommand{\Q}{\mathbb Q}
\DeclareMathOperator{\Aut}{\operatorname{Aut}}
\DeclareMathOperator{\orb}{\operatorname{orb}}
\DeclareMathOperator{\Res}{\operatorname{Res}}
\DeclareMathOperator{\GL}{\operatorname{GL}}
\begin{document}

\title{On the Generalization of the Gap Principle}

\author{Anton Mosunov}
\affil{University of Waterloo}

\date{}

\maketitle

\begin{abstract}
Let $\alpha$ be a real algebraic number of degree $d \geq 3$ and let $\beta \in \mathbb Q(\alpha)$ be irrational. Let $\mu$ be a real number such that $(d/2) + 1 < \mu < d$ and let $C_0$ be a positive real number. We prove that there exist positive real numbers $C_1$ and $C_2$, which depend only on $\alpha$, $\beta$, $\mu$ and $C_0$, with the following property. If $x_1/y_1$ and $x_2/y_2$ are rational numbers in lowest terms such that
$$
H(x_2, y_2) \geq H(x_1, y_1) \geq C_{1}
$$
and
$$
\left|\alpha - \frac{x_1}{y_1}\right| < \frac{C_0}{H(x_1, y_1)^\mu}, \quad \left|\beta - \frac{x_2}{y_2}\right| < \frac{C_0}{H(x_2, y_2)^\mu},
$$
then either $H(x_2, y_2) > C_{2}^{-1} H(x_1, y_1)^{\mu - d/2}$, or there exist integers $s, t, u, v$, with $sv - tu \neq 0$, such that
$$
\beta = \frac{s\alpha + t}{u\alpha + v} \quad \text{and} \quad \frac{x_2}{y_2} = \frac{sx_1 + ty_1}{ux_1 + vy_1},
$$
or both. Here $H(x, y) = \max(|x|, |y|)$ is the height of $x/y$. Since $\mu - d/2$ exceeds one, our result demonstrates that, unless $\alpha$ and $\beta$ are connected by means of a linear fractional transformation with integer coefficients, the heights of $x_1/y_1$ and $x_2/y_2$ have to be exponentially far apart from each other. An analogous result is established in the case when $\alpha$ and $\beta$ are $p$-adic algebraic numbers.
\end{abstract}

\section{Introduction}
The theory of Diophantine approximation concerns the question of how well real numbers can be approximated by rationals, and its variations. If $\alpha$ is a real number and $x/y$ is a rational number, with $x, y \in \mathbb Z$ and $y \geq 1$, then the quality of approximation of $\alpha$ by $x/y$ can be measured by means of a quantity $\mu$ such that the inequality
\begin{equation} \label{eq:diophantine_inequality}
\left|\alpha - \frac{x}{y}\right| < \frac{1}{y^\mu}
\end{equation}
is satisfied. The larger $\mu$ is, the better the approximation of $x/y$ with respect to $\alpha$ is. It was observed by Dirichlet that for $\mu = 2$ the inequality above can be achieved for infinitely many integers $x$ and $y$, as long as $\alpha$ is real and irrational. On the other hand, Liouville pointed out that, if $\alpha$ is an irrational algebraic number of degree $d$ and $\mu > d$, then (\ref{eq:diophantine_inequality}) has only finitely many solutions in integers $x$ and $y$ with $y \geq 1$. In other words, algebraic numbers cannot be approximated by rationals too well.

It is not difficult to count distinct $x/y$ satisfying (\ref{eq:diophantine_inequality}), with $y$ varying in a fixed range. Indeed, if it so happens that $C_1 \leq y_1 < y_2 \leq C_2$, then the fact that $x_1/y_1 \neq x_2/y_2$ yields
$$
\frac{1}{y_1y_2} \leq \left|\frac{x_1}{y_1} - \frac{x_2}{y_2}\right| \leq \left|\alpha - \frac{x_1}{y_1}\right| + \left|\alpha - \frac{x_2}{y_2}\right| < \frac{1}{y_1^\mu} + \frac{1}{y_2^\mu} < \frac{2}{y_1^\mu},
$$
resulting in the inequality
$$
2y_2 > y_1^{\mu - 1},
$$
which is known as the \emph{gap principle}. For $\mu > 2$ this inequality states that, if two distinct rationals satisfy (\ref{eq:diophantine_inequality}), then their denominators must be exponentially far apart from each other.

Unfortunately, as the quantity $C_2$ can be arbitrarily large, the gap principle itself does not allow us to count the number of distinct solutions to (\ref{eq:diophantine_inequality}). However, it was established by Thue \cite{thue} that, when $\alpha$ is an irrational algebraic number of degree $d \geq 3$ and $(d/2) + 1 < \mu < d$, then there exist computable positive real numbers $C_1$ and $\eta > 1$, which depend only on $\alpha$ and $\mu$, such that every solution $x_i/y_i$ with $C_1 \leq y_1 < \ldots < y_\ell$ satisfies $y_i < y_1^\eta$. This phenomenon is known as the \emph{Thue-Siegel principle} and it was vastly generalized by Bombieri and Mueller \cite{bombieri-mueller}. When combined with the gap principle, the Thue-Siegel principle enables us to count the number of solutions $x/y$ to (\ref{eq:diophantine_inequality}) such that $y \geq C_1$.

For a rational number $x/y$ in lowest terms, let $H(x, y) = \max(|x|, |y|)$ denote the \emph{height} of $x/y$. In this article, we generalize the gap principle as follows. Notice that the positive real numbers $C_1, C_2, \ldots$ occurring throughout the article are all computable. 

\begin{thm} \label{thm:archimedean_gap_principle}
\emph{(A generalized Archimedean gap principle)} Let $\alpha$ be a real algebraic number of degree $d \geq 3$ over $\Q$ and let $\beta$ be irrational and in $\Q(\alpha)$. Let $\mu$ be a real number such that $(d/2) + 1 < \mu < d$ and let $C_0$ be a positive real number. There exist positive real numbers $C_1$ and $C_2$, which depend only on $\alpha$, $\beta$, $\mu$ and $C_0$, with the following property. If $x_1/y_1$ and $x_2/y_2$ are rational numbers in lowest terms such that $H(x_2, y_2) \geq H(x_1, y_1) \geq C_{1}$ and
$$
\left|\alpha - \frac{x_1}{y_1}\right| < \frac{C_0}{H(x_1, y_1)^\mu}, \quad \left|\beta - \frac{x_2}{y_2}\right| < \frac{C_0}{H(x_2, y_2)^\mu},
$$
then at least one of the following holds:
\begin{itemize}
\item  $H(x_2, y_2) > C_{2}^{-1} H(x_1, y_1)^{\mu - d/2}$;
 \item There exist integers $s, t, u, v$, with $sv - tu \neq 0$, such that
$$
\beta = \frac{s\alpha + t}{u\alpha + v} \quad \text{and} \quad \frac{x_2}{y_2} = \frac{sx_1 + ty_1}{ux_1 + vy_1}.
$$
\end{itemize}
\end{thm}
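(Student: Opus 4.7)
My plan is to construct an integer polynomial $R(X, Y)$ vanishing at $(\alpha, \beta)$, of small bidegree, and evaluate it at $(x_1/y_1, x_2/y_2)$ to produce the required dichotomy. Since $[\Q(\alpha) : \Q] = d$, a dimension count applied to the $\Q$-linear evaluation map $(P, Q) \mapsto P(\alpha) - \beta Q(\alpha)$ from $\Q[X]_{\leq e} \oplus \Q[X]_{\leq e}$ into $\Q(\alpha)$ shows that its kernel is nontrivial whenever $2(e+1) > d$. Let $e \geq 1$ be the smallest integer for which the kernel contains a nonzero element; then $e \leq \lfloor d/2 \rfloor \leq d/2$. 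Clearing denominators and dividing out any common polynomial factor, I obtain coprime $P, Q \in \Z[X]$ with $\deg P, \deg Q \leq e$ and $P(\alpha) = \beta Q(\alpha)$; moreover $Q \not\equiv 0$ (otherwise $P(\alpha) = 0$ would contradict $\deg P < d$). Set $R(X, Y) = P(X) - Y Q(X)$, so $R(\alpha, \beta) = 0$.

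Writing $\tilde P(x, y) = y^e P(x/y)$ and $\tilde Q(x, y) = y^e Q(x/y)$, the quantity
\[
y_1^e y_2 \cdot R(x_1/y_1,\, x_2/y_2) \;=\; y_2 \tilde P(x_1, y_1) - x_2 \tilde Q(x_1, y_1)
\]
is an integer. A two-variable Taylor expansion of $R$ at $(\alpha, \beta)$, combined with the approximation hypotheses and $H_2 \geq H_1$, yields $|R(x_1/y_1, x_2/y_2)| \leq C' H_1^{-\mu}$ for a constant $C'$ depending only on $\alpha, \beta, \mu, C_0$. If the displayed integer is nonzero, then $|R(x_1/y_1, x_2/y_2)| \geq 1/(H_1^e H_2)$, and combining the two bounds gives $H_2 \geq H_1^{\mu - e}/C' \geq H_1^{\mu - d/2}/C'$, which is the first alternative of the theorem.

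Otherwise the integer vanishes, so $x_2/y_2 = P(x_1/y_1)/Q(x_1/y_1)$. If $e = 1$, writing $P(X) = sX + t$ and $Q(X) = uX + v$ immediately produces $\beta = (s\alpha+t)/(u\alpha+v)$ and $x_2/y_2 = (sx_1+ty_1)/(ux_1+vy_1)$ with $sv - tu \neq 0$ by coprimality: the M\"obius conclusion. If $e \geq 2$, I show this case cannot occur for $H_1$ sufficiently large. A homogenized B\'ezout identity $AP + BQ = \Res(P, Q)$ yields a constant $G = G(P, Q)$ such that $\gcd(\tilde P(x_1, y_1), \tilde Q(x_1, y_1)) \leq G$; since $x_1/y_1$ is close to $\alpha$ and $P(\alpha), Q(\alpha) \neq 0$, both $|\tilde P(x_1, y_1)|$ and $|\tilde Q(x_1, y_1)|$ are of order $H_1^e$, forcing $H_2 \gg H_1^e$. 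On the other hand, $h := P/Q$ satisfies $h'(\alpha) \neq 0$ (else $h$ would be constant and $\beta \in \Q$), so
\[
\left|\frac{x_2}{y_2} - \beta\right| = |h(x_1/y_1) - h(\alpha)| \;\gg\; \left|\frac{x_1}{y_1} - \alpha\right| \;\gg\; H_1^{-d}
\]
by Liouville's inequality for $\alpha$. Combined with $|x_2/y_2 - \beta| < C_0/H_2^\mu$, this gives $H_2 \ll H_1^{d/\mu}$. Since $\mu > d/2 + 1$ forces $d/\mu < 2 \leq e$, the bounds $H_2 \gg H_1^e$ and $H_2 \ll H_1^{d/\mu}$ are inconsistent once $H_1$ exceeds an effectively computable threshold, which we absorb into $C_1$.

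The most delicate step is the elimination of the $R = 0$ case when $e \geq 2$. It rests on the resultant-based bound for $\gcd(\tilde P(x_1, y_1), \tilde Q(x_1, y_1))$ and on the interplay between Liouville's inequality for $\alpha$ and the approximation hypothesis for $\beta$; the condition $\mu > d/2 + 1$ enters precisely through $d/\mu < 2$. The remaining work is routine bookkeeping to verify that the constants $C_1$ and $C_2$ depend only on $\alpha, \beta, \mu$ and $C_0$.
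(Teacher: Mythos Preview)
Your proposal is correct and follows essentially the same route as the paper: construct a minimal-degree auxiliary $R(X,Y)=P(X)-YQ(X)$ with $R(\alpha,\beta)=0$, derive the gap when $R(x_1/y_1,x_2/y_2)\neq0$, read off the M\"obius alternative when $e=1$, and for $e\geq2$ combine the resultant-bounded gcd (giving $H_2\gg H_1^{e}$) with the nonvanishing of the Wronskian at~$\alpha$ (giving $|\beta-x_2/y_2|\gg|\alpha-x_1/y_1|$) and Liouville's inequality to force a contradiction via $e>d/\mu$. The only minor deviation is that for the bound $H_2\gg H_1^{e}$ you exploit $P(\alpha),Q(\alpha)\neq0$ together with the proximity of $x_1/y_1$ to $\alpha$, whereas the paper proves a general pointwise lower bound on $\max\{|\tilde P(x_1,y_1)|,|\tilde Q(x_1,y_1)|\}$ valid for arbitrary $(x_1,y_1)$; your shortcut is legitimate in this setting and slightly simpler.
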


Since the exponent $\mu - d/2$ exceeds one, our result demonstrates that, unless $\alpha$ and $\beta$ are connected by means of a linear fractional transformation with integer coefficients, the heights of $x_1/y_1$ and $x_2/y_2$ have to be exponentially far apart from each other.

Next, let $|\quad|_p$ denote the $p$-adic absolute value on the field of $p$-adic numbers $\Q_p$, normalized so that $|p|_p = p^{-1}$. An analogous result for $p$-adic algebraic numbers is as follows.

\begin{thm} \label{thm:non-archimedean_gap_principle}
\emph{(A generalized non-Archimedean gap principle)} Let $p$ be a rational prime. Let $\alpha \in \Q_p$ be a $p$-adic algebraic number of degree $d \geq 3$ over $\Q$ and let $\beta$ be irrational and in $\Q(\alpha)$. Let $\mu$ be a real number such that $(d/2) + 1 < \mu < d$ and let $C_0$ be a positive real number. There exist positive real numbers $C_{3}$ and $C_{4}$, which depend only on $\alpha$, $\beta$, $\mu$ and $C_0$, with the following property. If $x_1/y_1$ and $x_2/y_2$ are rational numbers in lowest terms such that $H(x_2, y_2) \geq H(x_1, y_1) \geq C_{3}$ and
$$
\left|y_1\alpha - x_1\right|_p < \frac{C_0}{H(x_1, y_1)^\mu}, \quad \left|y_2\beta - x_2\right|_p < \frac{C_0}{H(x_2, y_2)^\mu},
$$
then at least one of the following holds:
\begin{itemize}
\item  $H(x_2, y_2) > C_{4}^{-1} H(x_1, y_1)^{\mu - d/2}$;
 \item There exist integers $s, t, u, v$, with $sv - tu \neq 0$, such that
$$
\beta = \frac{s\alpha + t}{u\alpha + v} \quad \text{and} \quad \frac{x_2}{y_2} = \frac{sx_1 + ty_1}{ux_1 + vy_1}.
$$
\end{itemize}
\end{thm}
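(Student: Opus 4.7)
The plan is to adapt the strategy of Theorem~\ref{thm:archimedean_gap_principle} to the $p$-adic setting, with the product formula for $\Q$ playing the role of the Archimedean gap argument. First, since $\beta \in \Q(\alpha)$ and $\dim_\Q \Q(\alpha) = d$, the $2(\lfloor d/2\rfloor+1)>d$ elements $\alpha^i\beta^j$ with $0\leq i\leq\lfloor d/2\rfloor$ and $0\leq j\leq 1$ are $\Q$-linearly dependent. Clearing denominators yields a nonzero polynomial $F(X,Y) = A(X) + YB(X) \in \Z[X,Y]$ of bidegree $(\lfloor d/2\rfloor,1)$ with $F(\alpha,\beta) = 0$ and coefficients bounded solely in terms of $\alpha$ and $\beta$.

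Set $N := y_1^{\lfloor d/2\rfloor} y_2\, F(x_1/y_1, x_2/y_2) \in \Z$. A direct monomial estimate gives $|N|_\infty \leq c_1 H(x_1,y_1)^{\lfloor d/2\rfloor} H(x_2,y_2)$. For the $p$-adic estimate, I would subtract $F(\alpha,\beta)=0$ and use the factorization $U^k-V^k=(U-V)(U^{k-1}+\cdots+V^{k-1})$ to rewrite
\[
N \;=\; (x_1-y_1\alpha)\bigl(y_2 P + x_2 Q\bigr) \;+\; (x_2-y_2\beta)\, y_1^{\lfloor d/2\rfloor} B(\alpha),
\]
where $P,Q$ are polynomials in $(x_1,y_1)$ with coefficients in $\Z[\alpha]$ of total degree at most $\lfloor d/2\rfloor-1$. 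Since $|x_i|_p,|y_i|_p \leq 1$ and $|\alpha|_p$ is fixed, the cofactors satisfy uniform $p$-adic bounds depending only on $\alpha,\beta,F$, and the hypothesis gives
\[
|N|_p \;\leq\; c_2\,\max\!\bigl(|y_1\alpha-x_1|_p,\ |y_2\beta-x_2|_p\bigr) \;\leq\; c_2 C_0\, H(x_1,y_1)^{-\mu}.
\]
If $N\neq 0$, the product formula forces $|N|_\infty\cdot|N|_p \geq 1$, and combining the two bounds produces $H(x_2,y_2) \geq c_3^{-1} H(x_1,y_1)^{\mu-\lfloor d/2\rfloor} \geq c_3^{-1} H(x_1,y_1)^{\mu-d/2}$, which is the first alternative.

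The principal obstacle is the case $N=0$. It forces $x_2/y_2 = -A(x_1/y_1)/B(x_1/y_1) =: \phi(x_1/y_1)$ with $\phi(\alpha) = \beta$, and one must extract integers $s,t,u,v$ with $sv-tu\neq 0$ such that $\beta = (s\alpha+t)/(u\alpha+v)$ and $x_2/y_2 = (sx_1+ty_1)/(ux_1+vy_1)$. For $d=3$ the polynomial $F$ already has bidegree $(1,1)$, so $\phi$ is automatically a Möbius transformation and the conclusion is immediate. For $d\geq 4$ the rational function $\phi$ may have degree up to $\lfloor d/2\rfloor$, and one proceeds as follows: when $d$ is even the evaluation-at-$(\alpha,\beta)$ map has at least two-dimensional kernel in bidegree $(\lfloor d/2\rfloor,1)$, so a second auxiliary polynomial $F'$ is available, and a B\'ezout-type count on $\mathbb{P}^1\times\mathbb{P}^1$ shows that the simultaneous vanishing of both $N$'s would force $(x_1/y_1,x_2/y_2)$ to lie in the Galois orbit of $(\alpha,\beta)$, contradicting rationality; for $d$ odd a separate reduction, analogous to the corresponding step in the proof of Theorem~\ref{thm:archimedean_gap_principle}, is required. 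This Möbius reduction constitutes the technical heart of the argument.
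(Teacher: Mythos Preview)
Your treatment of the non-vanishing case $N\neq 0$ is essentially the paper's argument: bound $|N|_\infty$ trivially, bound $|N|_p$ by a Taylor-type expansion around $(\alpha,\beta)$, and invoke the product formula. That part is fine.

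The genuine gap is in the vanishing case $N=0$ for $d\geq 4$. Your B\'ezout proposal does not work. The difficulty is that any two auxiliary polynomials $F,F'$ of bidegree $(\lfloor d/2\rfloor,1)$ vanishing at $(\alpha,\beta)$ may share a common factor: if one lets $r$ be the minimal degree for which polynomials $P,Q\in\Z[X]$ with $P(\alpha)+\beta Q(\alpha)=0$ exist (the paper's ``minimal pair''), then every such $F$ factors as $G(X)\bigl(P(X)+YQ(X)\bigr)$ whenever $\lfloor d/2\rfloor\leq d-1-r$, so $F$ and $F'$ lie on the same curve $P+YQ=0$ and the intersection argument collapses. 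For $d$ odd you offer only a forward reference to Theorem~\ref{thm:archimedean_gap_principle}, which is circular since that theorem requires the same step.

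The paper's resolution is quite different and is the heart of the matter. One works from the outset with the \emph{minimal} pair $P,Q$ of degree $r\leq\lfloor d/2\rfloor$. If $r=1$ the M\"obius conclusion is immediate. If $r\geq 2$ and $R(x_1/y_1,x_2/y_2)=0$, one does not seek a second auxiliary polynomial at all; instead one shows this case is \emph{vacuous} once $C_3$ is large enough, via three ingredients:
\begin{enumerate}
\item Since $x_2/y_2=-P(x_1,y_1)/Q(x_1,y_1)$ with $P,Q$ coprime of degree $r$, a resultant/height argument (Proposition~\ref{prop:alternative_gap_principle}) forces $H(x_2,y_2)\gg H(x_1,y_1)^r$.
\item A Wronskian computation shows $|y_2\beta-x_2|_p\gg |y_1\alpha-x_1|_p$, because the leading term in the Taylor expansion of $P(\alpha)Q(x_1,y_1)-Q(\alpha)P(x_1,y_1)$ is $W(\alpha)(x_1-\alpha y_1)y_1^{r-1}$ with $W(\alpha)\neq 0$.
\item Combining (1), (2), the hypothesis $|y_2\beta-x_2|_p<C_0 H(x_2,y_2)^{-\mu}$, and the $p$-adic Liouville inequality $|y_1\alpha-x_1|_p\gg H(x_1,y_1)^{-d}$ yields $H(x_1,y_1)^{r\mu-d}\ll 1$. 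Since $r\geq 2$ and $\mu>d/2+1$ give $r\mu>d$, this bounds $H(x_1,y_1)$ absolutely, contradicting $H(x_1,y_1)\geq C_3$.
\end{enumerate}
This Wronskian--Liouville contradiction is the missing idea in your proposal.
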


We apply our result to establish an absolute bound on the number of large primitive solutions of certain Thue inequalities. A \emph{Thue inequality} is an inequality of the form
\begin{equation} \label{eq:thue-inequality}
0 < |F(x, y)| \leq m,
\end{equation}
where $m$ is a positive integer and $F \in \Z[x, y]$ is an irreducible binary form of degree $d \geq 3$. A solution $(x, y) \in \Z^2$ to the above inequality is called \emph{primitive} when $x$ and $y$ are coprime. In \cite[Theorem 1]{gyory}, Gy\H{o}ry proved that there exists a positive real number $Y_0$, which depends only on $m$ and $F$, such that the number of primitive solutions $(x, y)$ to (\ref{eq:thue-inequality}) with $H(x, y) \geq Y_0$ does not exceed $25d$ (here the solutions $(x, y)$ and $(-x, -y)$ are regarded as the same). Using Theorem \ref{thm:archimedean_gap_principle}, we can improve Gy\H{o}ry's result in the case when $F$ is irreducible and the field extension $\Q(\alpha)/\Q$ is Galois, where $\alpha$ is a root of $F(x, 1)$.

To state our result, we need to introduce the notion of \emph{enhanced automorphism group} of $F$. For a $2 \times 2$ matrix $M = \left(\begin{smallmatrix}s & u\\t & v\end{smallmatrix}\right)$, with complex entries, define the binary form $F_M$ by
$$
F_M(x, y) = F(sx + uy, tx + vy).
$$
Let $\overline{\Q}$ denote the algebraic closure of the rationals and let $K$ be a field containing $\mathbb Q$. We say that a matrix $M = \left(\begin{smallmatrix}s & u\\t & v\end{smallmatrix}\right) \in \operatorname{M}_2(K)$ is a \emph{$K$-automorphism of $F$} (resp., $|F|$) if $F_M = F$ (resp., $F_M = \pm F$). The set of all $K$-automorphisms of $F$ (resp., $|F|$) is denoted by $\Aut_K F$ (resp., $\operatorname{Aut}_K |F|$). We define
\begin{equation} \label{eq:G}
\Aut' |F| = \left\{\frac{1}{\sqrt{|sv - tu|}}\begin{pmatrix}s & u\\t & v\end{pmatrix} \colon s, t, u, v \in \Z\right\} \cap \Aut_{\overline{\Q}} |F|
\end{equation}
and refer to it as the \emph{enhanced automorphism group} of $F$. One can verify that $\operatorname{Aut}' |F|$ is a group. In Section \ref{sec:automorphisms} we will show that, under the conditions on $F$ specified above, it is finite and contains at most $24$ elements. We prove the following.

\begin{thm} \label{thm:thue_large}
Let $F \in \Z[x, y]$ be an irreducible binary form of degree \mbox{$d \geq 3$}. Let $\alpha$ be a root of $F(x, 1)$ and assume that the field extension $\Q(\alpha)/\Q$ is Galois. For a positive integer $m$ consider the Thue inequality (\ref{eq:thue-inequality}). Let $\mu$ be a real number such that $(d/2) + 1 < \mu < d$. There exists a positive real number $C_5$, which depends only on $m$, $F$ and $\mu$, such that the number of primitive solutions $(x, y)$ to (\ref{eq:thue-inequality}) with $H(x, y) \geq C_5$ does not exceed
$$
\#\Aut'|F|\cdot \left\lfloor 1 + \frac{11.51 + 1.5\log d + \log \mu}{\log(\mu - d/2)}\right\rfloor.
$$
Here the solutions $(x, y)$ and $(-x, -y)$ are regarded as the same.
\end{thm}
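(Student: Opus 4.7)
The plan is to iterate Theorem~\ref{thm:archimedean_gap_principle} over the set of large primitive solutions, and combine with Bombieri--Mueller's effective Thue--Siegel principle. Writing $F(x,y) = a_0 \prod_{i=1}^{d}(x - \alpha_i y)$, the Galois hypothesis ensures that every root $\alpha_i$ lies in $\Q(\alpha)$. A standard Liouville-type argument shows that every primitive solution $(x,y)$ with $H(x,y)$ larger than a suitable threshold has a unique nearest root $\alpha_{i(x,y)}$ satisfying $|\alpha_{i(x,y)} - x/y| < C_0/H(x,y)^\mu$, where $C_0$ can be prescribed in advance so as to feed Theorem~\ref{thm:archimedean_gap_principle}.

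Let $T$ denote the set of primitive solutions with $H \geq C_5$, and define a recursion: $T_0 := T$ and, given $T_{k-1}$ with minimum element $s_k$, put $T_k := \{(x,y) \in T_{k-1} : H(x,y) > C_2^{-1} H(s_k)^{\mu - d/2}\}$. For any $(x,y) \in T_{k-1}\setminus T_k$ other than $s_k$, Theorem~\ref{thm:archimedean_gap_principle} applied with $\alpha := \alpha_{i(s_k)}$ and $\beta := \alpha_{i(x,y)}$ must land in the M\"obius alternative, producing integers $(s,t,u,v)$ with $sv-tu \neq 0$ such that the map $\phi(\alpha) := (s\alpha + t)/(u\alpha + v)$ sends $\alpha_{i(s_k)}$ to $\alpha_{i(x,y)}$ and $s_k$ projectively to $(x,y)$. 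The Galois hypothesis is decisive: for any other root $\alpha_\ell$ there is $\sigma \in \Gal(\Q(\alpha)/\Q)$ with $\sigma(\alpha_{i(s_k)}) = \alpha_\ell$, and because $\sigma$ fixes $s,t,u,v \in \Z$, one has $\phi(\alpha_\ell) = \sigma(\phi(\alpha_{i(s_k)}))$, which is again a root, so $\phi$ permutes $\{\alpha_1,\ldots,\alpha_d\}$. A direct computation gives $F_M = (F(s,u)/a_0)\, F$ for $M = \bigl(\begin{smallmatrix}s&t\\u&v\end{smallmatrix}\bigr)$, and the transformation rule for the discriminant of a binary form under linear substitution then forces $|F(s,u)/a_0| = |sv-tu|^{d/2}$. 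Hence the normalized matrix $\frac{1}{\sqrt{|sv-tu|}} M$, transposed to match the convention of~(\ref{eq:G}), lies in $\Aut_{\overline{\Q}}|F|$ and therefore in $\Aut'|F|$. Because $\phi$ is determined by its value on $s_k$, distinct siblings give distinct elements of $\Aut'|F|$, yielding $|T_{k-1}\setminus T_k|\leq \#\Aut'|F|$.

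To bound the number of iterations, set $m_k := H(s_k)$. The recurrence $m_{k+1} > C_2^{-1} m_k^{\mu-d/2}$ yields $\log m_k > \tfrac12(\mu-d/2)^{k-1}\log m_1$ once $m_1 \geq C_5$ is taken large enough to absorb the additive constant $\log C_2$. Applying Bombieri and Mueller's \cite{bombieri-mueller} effective Thue--Siegel principle to each of the at most $d$ roots of $F$ produces a uniform upper bound $m_k \leq m_1^{\eta}$ with $\log \eta \leq 11.51 + 1.5\log d + \log \mu$, where the $\log d$ reflects the maximum over roots. Comparing these two inequalities gives $(\mu-d/2)^{k-1} < 2\eta$, so the number of nonempty $T_k$ is at most $\lfloor 1 + (11.51 + 1.5\log d + \log \mu)/\log(\mu - d/2)\rfloor$. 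Multiplying by the sibling bound $\#\Aut'|F|$ yields the theorem.

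The principal obstacle is the verification that the M\"obius datum produced by Theorem~\ref{thm:archimedean_gap_principle} genuinely lies in $\Aut'|F|$: the Galois hypothesis is precisely what promotes the pointwise assertion $\phi(\alpha_{i(s_k)}) = \alpha_{i(x,y)}$ to a permutation of all $d$ roots, and the matrix conventions between~(\ref{eq:G}) and $\beta = (s\alpha + t)/(u\alpha + v)$ in Theorem~\ref{thm:archimedean_gap_principle} must be reconciled carefully via transposition. A secondary subtlety is extracting the precise constant $11.51 + 1.5\log d + \log\mu$ from Bombieri--Mueller, which requires choosing their auxiliary parameters so as to optimize $\eta$.
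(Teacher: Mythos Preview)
Your strategy matches the paper's: associate each large primitive solution to its nearest root (which lies in $\Q(\alpha)$ by the Galois hypothesis, so that Theorem~\ref{thm:archimedean_gap_principle} applies), iterate the generalized gap principle, bound the number of ``siblings'' at each level by $\#\Aut'|F|$ via the argument of Proposition~\ref{prop:automorphisms}, and cap the number of levels using Thue--Siegel. The paper factors this through an intermediate result (Theorem~\ref{thm:count_large_approximations}) and, via Lemma~\ref{lem:lewis-mahler}, splits into approximations to $\alpha_j$ and to $\alpha_j^{-1}$, incurring a factor of $2$ that is later absorbed by $\gamma \le \tfrac12\#\Aut'|F|$; your direct organization is equivalent.

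There is, however, a genuine gap in your Thue--Siegel step. The Bombieri--Mueller principle (Lemma~\ref{lem:thue-siegel} in the paper) does not yield a bound of the form $m_k \le m_1^{\eta}$ with $\log\eta \le 11.51 + 1.5\log d + \log\mu$; it gives $\log m_\ell < \delta^{-1}\bigl(\log(4e^{A}) + \log m_1\bigr)$ with $\delta^{-1}$ of order $d^2$, plus an additive term that does not go away. The precise constant $11.51 + 1.5\log d + \log\mu$---in particular the exponent $3/2$ on $d$ rather than $2$, and the presence of the factor $\mu$---does not come from optimizing Bombieri--Mueller's auxiliary parameters alone. It emerges only after one combines the Thue--Siegel inequality with the gap-principle lower bound and chooses the height threshold $C_5$ large enough that $\log(4e^A)/\log C_5 < (\mu - 1.42\sqrt d)/(1.42\sqrt d - 1)$; this is what converts $83334\,d^2$ into $98897\,d^{3/2}\mu$. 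Your remark that ``the $\log d$ reflects the maximum over roots'' is incorrect: Thue--Siegel is applied once, to the single pair $(\alpha_{i(s_1)}, \alpha_{i(s_k)})$, not root by root. Without the balancing just described your argument delivers only a bound of the shape $\lfloor 1 + (c + 2\log d)/\log(\mu - d/2)\rfloor$ for some absolute $c$, which is weaker than the theorem claims.
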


Let $\mu = (3d + 2)/4$. Then the function
$$
f(d) = 1 + \frac{11.51+ 1.5\log d + \log((3d + 2)/4)}{\log((d + 2)/4)}
$$
is monotonously decreasing on the interval $[3, \infty)$. To see that this is the case, it is sufficient to prove that $g(x) = \frac{\log x}{\log((x + 2)/4)}$ and $h(x) = \frac{\log((3x + 2)/4)}{\log((x + 2)/4)}$ are monotonously decreasing on the specified interval. We leave it as an exercise to the reader to prove that the derivatives of $g(x)$ and $h(x)$ take negative values when evaluated at any $x_0 \geq 3$. Since $f(3) \approx 64.5$, we can use the upper bound $\#\Aut'|F| \leq 24$ established in Lemma \ref{lem:G_is_finite} as well as Theorem \ref{thm:thue_large} to conclude that the number of primitive solutions $(x, y)$ to (\ref{eq:thue-inequality}) such that $H(x, y) \geq C_5$ does not exceed $24 \cdot \lfloor f(3)\rfloor = 1536$ when $d \geq 3$. Furthermore, since $f(10^{14}) < 4$ and $\lim\limits_{d \rightarrow \infty} f(d) = 3.5$, we can also conclude that it does not exceed $24 \cdot \lfloor f(10^{14})\rfloor = 72$ when $d \geq 10^{14}$. While it is an interesting task to compare the value of $C_5$ to the quantity $Y_L$ in \cite{mueller-schmidt} (see equation 2.9) or to the quantity $Y_0$ in \mbox{\cite[Theorem 1]{gyory}}, it lies outside the scope of this article.

The article is structured as follows. In Section \ref{sec:theory} we outline a number of auxiliary results, which are used in the later sections. We recommend the reader to skip this section and use it as a reference. In Section \ref{sec:minimal_pairs} we introduce the notion of a \emph{minimal pair} $P, Q \in \Z[x]$ for a tuple of algebraic numbers $(\alpha, \beta) \in \overline{\Q} \times (\Q(\alpha) \setminus \Q)$, and summarize the properties of minimal pairs. Minimal pairs enable us to construct a nonzero polynomial $R(x, y) = P(x) + yQ(x)$, which vanishes at the point $(\alpha, \beta)$. When $R(x, y)$ does not vanish at the rational point $\left(x_1/y_1,\ x_2/y_2\right)$, establishing the gap principle is a rather easy task. In \mbox{Section \ref{sec:gap-principle-with-vanishing}}, we prove that \emph{despite the vanishing} of $R(x, y)$ at $\left(x_1/y_1,\ x_2/y_2\right)$, it is still possible to prove that the heights of $(x_1, y_1)$ and $(x_2, y_2)$ are exponentially far apart, provided that $\alpha$ and $\beta$ are not connected by means of a linear fractional transformation with integer coefficients. In Sections \ref{sec:generalized_archimedean_gap_principle} and \ref{sec:generalized_non-archimedean_gap_principle} we prove Theorems \ref{thm:archimedean_gap_principle} and \ref{thm:non-archimedean_gap_principle}, respectively. In Section \ref{sec:automorphisms} we investigate the properties of the enhanced automorphism group $\Aut' |F|$. Finally, in Section \ref{sec:thue-inequality} we prove \mbox{Theorem \ref{thm:thue_large}}.

\section{Auxiliary Results} \label{sec:theory}

This section contains several definitions and results, which we utilize in the remaining part of the article. We recommend the reader to skip this section and refer to it when reading the proofs outlined in sections that follow it.

Let us begin with the number of definitions. For an arbitrary polynomial $R \in \Z[x_1, x_2, \ldots, x_n]$, we let $H(R)$ denote the maximum of Archimedean absolute values of its coefficients, and refer to this quantity as a \emph{height} of $R$. For an algebraic number $\alpha$ with the minimal polynomial $f$, we write $H(\alpha) = H(f)$. For a point $(x_1, x_2, \ldots, x_n) \in \CC^n$, we define
$$
H(x_1, x_2, \ldots, x_n) = \max\limits_{i = 1, 2, \ldots, n}\left\{|x_i|\right\}
$$
and refer to this quantity as the \emph{height} of $(x_1, x_2, \ldots, x_n)$.

In this section, as well as all the subsequent ones, we write
$$
D_{i,j} = \frac{1}{i!j!}\frac{\partial^{i + j}}{\partial X^i\partial Y^j} \quad \text{and} \quad D_i = \frac{1}{i!}\frac{d^i}{dX^i}.
$$

\begin{lem} \label{lem:liouville}
\emph{(Liouville's Theorem)} Let $\alpha \in \CC$ be an algebraic number of degree $d$ over $\Q$. There exists a positive number $C_6$, which depends only on $\alpha$, such that for all integers $x$ and $y$, with $y \neq 0$ and $x \neq y\alpha$, the inequality
\begin{equation} \label{eq:liouville_inequality}
\left|\alpha - \frac{x}{y}\right| \geq \frac{C_6}{H(x, y)^d}
\end{equation}
holds.
\end{lem}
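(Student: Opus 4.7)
The plan is to follow the classical argument due to Liouville. Let $f(X) = a_d X^d + a_{d-1}X^{d-1} + \cdots + a_0 \in \Z[X]$ be a minimal polynomial of $\alpha$ over $\Z$, and factor it over $\CC$ as $f(X) = a_d\prod_{i=1}^{d}(X - \alpha_i)$, with $\alpha_1 = \alpha$ and $\alpha_2,\ldots,\alpha_d$ its Galois conjugates. The key arithmetic input is that, since $f$ is irreducible over $\Q$ of degree $d \geq 1$ and $x/y \neq \alpha$, we have $f(x/y) \neq 0$, and clearing denominators shows that $y^d f(x/y)$ is a nonzero integer. Hence
\[
|f(x/y)| \;\geq\; \frac{1}{|y|^d} \;\geq\; \frac{1}{H(x,y)^d}.
\]

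To turn this into a lower bound for $|\alpha - x/y|$, I would split into two cases. If $|\alpha - x/y| \geq 1$, then the desired inequality holds trivially provided we take $C_6 \leq 1$. Otherwise $|x/y| \leq |\alpha| + 1$, so for each $i \geq 2$ we have the crude bound
\[
\left|\frac{x}{y} - \alpha_i\right| \;\leq\; 1 + |\alpha| + |\alpha_i|,
\]
and therefore $\prod_{i=2}^{d}|x/y - \alpha_i| \leq M$, where $M = \prod_{i=2}^d(1 + |\alpha| + |\alpha_i|)$ depends only on $\alpha$. Combining with the factorization $|f(x/y)| = |a_d| \cdot |\alpha - x/y| \cdot \prod_{i=2}^{d}|x/y - \alpha_i|$ gives
\[
|a_d| \cdot M \cdot \left|\alpha - \frac{x}{y}\right| \;\geq\; \frac{1}{H(x,y)^d}.
\]

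Taking $C_6 = \min\!\left(1,\ \tfrac{1}{|a_d| M}\right)$ yields the desired bound in both cases, and $C_6$ clearly depends only on $\alpha$. There is no real obstacle here; the only mild subtlety is remembering to handle the case $|\alpha - x/y| \geq 1$ separately, since otherwise the bound on the conjugate factors $|x/y - \alpha_i|$ is not uniform in $(x,y)$.
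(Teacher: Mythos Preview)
Your argument is the classical Liouville proof and is correct as written. The paper does not actually prove this lemma; it simply cites \cite[Theorem 1E]{schmidt3}, where precisely this standard argument appears, so your proposal is essentially the same approach written out in full.
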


\begin{proof}
See \cite[Theorem 1E]{schmidt3}.
\end{proof}

\begin{lem} \label{lem:p-adic_liouville}
\emph{($p$-adic Liouville's Theorem)} Let $p$ be a rational prime and $\alpha \in \Q_p$ a $p$-adic algebraic number of degree $d$ over $\Q$. There exists a positive number $C_7$, which depends only on $\alpha$, such that for all integers $x$ and $y$, with $x \neq y\alpha$, the inequality
\begin{equation} \label{eq:p-adic_liouville_inequality}
|y\alpha - x|_p \geq \frac{C_7}{H(x, y)^d}
\end{equation}
holds.
\end{lem}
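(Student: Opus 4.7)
The plan is to carry out the standard Archimedean Liouville argument (Lemma \ref{lem:liouville}) with the $p$-adic absolute value, replacing the mean value estimate by the factorization of the minimal polynomial over $\overline{\Q_p}$. Let $f(X) = a_d X^d + a_{d-1} X^{d-1} + \cdots + a_0 \in \Z[X]$ be a primitive minimal polynomial of $\alpha$, and set $F(X, Y) = a_d X^d + a_{d-1} X^{d-1} Y + \cdots + a_0 Y^d$. Since $f$ is irreducible over $\Q$ and $x \neq y\alpha$, the integer $F(x, y)$ is nonzero (the case $d = 1$ is elementary and is handled separately).

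From $|n|_p = p^{-\ord_p(n)} \geq 1/|n|$ for any nonzero integer $n$, combined with the trivial estimate $|F(x, y)| \leq (d+1)H(f)\,H(x, y)^d$, I would first derive
$$
|F(x, y)|_p \geq \frac{1}{(d+1) H(f)\, H(x, y)^d}.
$$
Next, fix an embedding of $\overline{\Q}$ into an algebraic closure $\overline{\Q_p}$ sending $\alpha$ to itself, let $\alpha = \alpha_1, \alpha_2, \ldots, \alpha_d$ denote the Galois conjugates of $\alpha$ viewed inside $\overline{\Q_p}$, and take $|\cdot|_p$ to be the unique extension to $\overline{\Q_p}$. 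Factoring $F(X, Y) = a_d \prod_{i=1}^d (X - \alpha_i Y)$ and isolating the $i = 1$ factor gives
$$
|y\alpha - x|_p = \frac{|F(x, y)|_p}{|a_d|_p \prod_{i=2}^d |x - \alpha_i y|_p}.
$$
Since $|a_d|_p \leq 1$, and since the ultrametric inequality combined with $|x|_p, |y|_p \leq 1$ implies $|x - \alpha_i y|_p \leq \max(1, |\alpha_i|_p)$, the denominator is at most $\prod_{i=2}^d \max(1, |\alpha_i|_p)$, a constant depending only on $\alpha$. Combining this with the lower bound on $|F(x, y)|_p$ delivers the claim with
$$
C_7 = \Bigl((d+1) H(f) \prod_{i=2}^d \max(1, |\alpha_i|_p)\Bigr)^{-1}.
$$

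The only subtlety is bookkeeping: one must fix the embedding $\overline{\Q} \hookrightarrow \overline{\Q_p}$ so that the distinguished element $\alpha \in \Q_p$ is identified with a root of its own minimal polynomial, and appeal to the unique extension of $|\cdot|_p$ to $\overline{\Q_p}$ in order to make sense of $|x - \alpha_i y|_p$ for the conjugates $\alpha_i \notin \Q_p$. With these conventions in place the remainder of the argument is a routine non-Archimedean analog of the classical Liouville bound.
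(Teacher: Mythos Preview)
Your argument is correct. Both you and the paper start from the same trivial lower bound $|F(x,y)|_p \geq 1/|F(x,y)| \geq \bigl((d+1)H(f)H(x,y)^d\bigr)^{-1}$; the difference lies in how the complementary upper bound $|F(x,y)|_p \leq (\text{const})\cdot|y\alpha-x|_p$ is obtained. You pass to $\overline{\Q_p}$, factor $F(X,Y)=a_d\prod_i(X-\alpha_iY)$, and bound each foreign factor by $\max(1,|\alpha_i|_p)$ via the ultrametric inequality. The paper instead stays inside $\Q_p$: it Taylor-expands $F(x,y)$ about $(\alpha,1)$, writes $F(x,y)=(x-\alpha y)\sum_{i\geq 1}D_if(\alpha)(x-\alpha y)^{i-1}y^{d-i}$, and uses that $c_d\alpha$ and $c_d^{d-i}D_if(\alpha)$ are algebraic integers (hence have $p$-adic absolute value $\leq 1$) to bound the sum by a power of $|c_d|_p^{-1}$. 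Your route is arguably more transparent but needs the extension of $|\cdot|_p$ to $\overline{\Q_p}$ and a fixed embedding $\overline{\Q}\hookrightarrow\overline{\Q_p}$, which you correctly flag; the paper's route avoids this machinery at the cost of a slightly less obvious manipulation, and produces a constant expressed purely in terms of $c_d$ and $H(\alpha)$ rather than the $p$-adic sizes of the conjugates.
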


\begin{proof}
Let
$$
f(x) = c_dx^d + \cdots + c_1x + c_0
$$
be the minimal polynomial of $\alpha$ and let $F(x, y) = y^df(x/y)$ be its associated binary form. Since $f(\alpha) = 0$, it follows from Taylor's Theorem that
\begin{align*}
F(x ,y)
& = (x - \alpha y)\sum\limits_{i = 1}^dD_if(\alpha)(x - \alpha y)^{i - 1}y^{d - i}\\
& = (x - \alpha y)\sum\limits_{i = 1}^d\frac{D_if(\alpha)}{c_d^{i - 1}}(c_dx - c_d\alpha y)^{i - 1}y^{d - i}.
\end{align*}
Since $c_d\alpha$ and  $c_d^{d - i}D_if(\alpha)$ are algebraic integers, their $p$-adic absolute values do not exceed one, so
\begin{align*}
\left|F(x, y)\right|_p
& \leq |y\alpha - x|_p\cdot \max\limits_{i = 1, \ldots, d}\left\{\left|\frac{D_if(\alpha)}{c_d^{i - 1}}\right|_p\right\}\\
& = |y\alpha - x|_p\cdot \max\limits_{i = 1, \ldots, d}\left\{\left|\frac{c_d^{d - i}D_if(\alpha)}{c_d^{d - 1}}\right|_p\right\}\\
& \leq |y\alpha - x|_p \cdot |c_d|_p^{-{d+1}}.
\end{align*}
Since $x \neq y\alpha$, it must be the case that $F(x, y) \neq 0$. By the product formula, the following trivial lower bound holds:
$$
\left|F(x, y)\right|_p \geq \frac{1}{\left|F(x, y)\right|} \geq \frac{1}{(d + 1)H(\alpha)H(x, y)^d}.
$$
The result follows once we combine the upper and lower bounds on $|F(x, y)|_p$ and use the inequality $|c_d|_p \geq c_d^{-1}$.
\end{proof}

\begin{lem} \label{lem:siegels_lemma}
\emph{(Siegel's lemma, \cite{bombieri-vaaler})} Let $N$ and $M$ be positive integers with \mbox{$N > M$}. Let $a_{i, j}$ be integers of absolute value at most $A \geq 1$ for $i = 1, \ldots, N$ and $j = 1, \ldots, M$. Then there exist integers $t_1, \ldots, t_N$, not all zero, such that
$$
|t_i| \leq (NA)^{\frac{M}{N - M}}, \quad\,\, \sum\limits_{i = 1}^N a_{i, j}t_i = 0, \quad\,\, j = 1, \ldots, M.
$$
\end{lem}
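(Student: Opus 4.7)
The plan is to prove this by the Dirichlet box principle (pigeonhole), exactly as in the classical argument of Siegel. The map to analyze is the linear map $T \colon \Z^N \to \Z^M$ defined by $T(t_1,\ldots,t_N) = \bigl(L_1(t),\ldots,L_M(t)\bigr)$, where $L_j(t) = \sum_{i=1}^N a_{i,j} t_i$. The goal is to find two distinct integer vectors in a suitable box that collide under $T$; their difference will be the desired nonzero integer kernel vector with small coordinates.

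First I would set $H = \lfloor (NA)^{M/(N-M)} \rfloor$ and consider the finite set of all integer vectors $(t_1,\ldots,t_N)$ with $0 \le t_i \le H$. This set has exactly $(H+1)^N$ elements. Next, for each $j$, split the indices according to the sign of $a_{i,j}$ and write $L_j(t)$ as a difference of a nonnegative and nonpositive contribution; since $\sum_i |a_{i,j}| \le NA$, the values of $L_j$ on the box lie in an integer interval of length at most $NAH$, so there are at most $NAH+1$ possible values. Consequently the image of the box under $T$ has cardinality at most $(NAH+1)^M$.

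The key numeric check is then $(H+1)^N > (NAH+1)^M$. Using $A \ge 1$ one has $NAH + 1 \le NA(H+1)$, so it suffices to show $(H+1)^{N-M} > (NA)^M$, i.e.\ $H + 1 > (NA)^{M/(N-M)}$, which is immediate from the definition of $H$. By pigeonhole there exist distinct vectors $t, t'$ in the box with $T(t) = T(t')$. Setting $t_i^\ast = t_i - t_i'$ gives a nonzero integer vector with $|t_i^\ast| \le H \le (NA)^{M/(N-M)}$ and $\sum_i a_{i,j} t_i^\ast = 0$ for each $j$, which is precisely the conclusion.

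There is no serious obstacle here; the only point that requires a moment of care is verifying the inequality $(H+1)^N > (NAH+1)^M$ with the floor, which is handled by the trivial bound $NAH + 1 \le NA(H+1)$ valid since $A \ge 1$. The sharper Bombieri--Vaaler form of the lemma (involving $\det(A A^T)$) requires Minkowski's second theorem on successive minima, but for the stated pigeonhole bound above, the elementary argument suffices and is what I would write out.
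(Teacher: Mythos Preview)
Your argument is correct: this is the classical Siegel pigeonhole proof, and every step checks out, including the floor inequality $H+1 > (NA)^{M/(N-M)}$ and the bound $NAH+1 \le NA(H+1)$ (valid since $NA \ge 1$). The paper does not give its own proof at all; it simply cites \cite[Lemma 2.7]{zannier}. So you have supplied a complete, self-contained proof where the paper only gives a reference, and your approach coincides with the standard argument one finds in that reference.
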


\begin{proof}
See, for example, \cite[Lemma 2.7]{zannier}.
\end{proof}

\begin{lem} \label{lem:bound_on_coefficients_2}
Let $\alpha$ be an algebraic number of degree $d$ over $\Q$. Then for every non-negative integer $r$ there exist rational numbers $a_{r, i}$ such that
$$
\alpha^r = a_{r, d-1}\alpha^{d-1} + \cdots + a_{r, 1}\alpha +  a_{r, 0}.
$$
Furthermore, if we denote the leading coefficient of the minimal polynomial of $\alpha$ by $c_\alpha$ and put
$$
C_8 = 1 + \max\limits_{0 \leq i \leq d - 1}\left\{|a_{d, i}|\right\},
$$
then $c_\alpha^{\max\{0, r - d + 1\}}a_{r, i} \in \Z$ and $|a_{r, i}| \leq C_8^{\max\{0, r - d + 1\}}$ for all $i$ such that $0 \leq i \leq d - 1$.
\end{lem}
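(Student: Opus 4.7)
I would prove this by induction on $r$, using the minimal polynomial of $\alpha$ to reduce $\alpha^d$ modulo powers of degree $< d$, and then propagating the bound through a simple recurrence.

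First I would dispose of the trivial range $0 \le r \le d-1$: here we may set $a_{r,i} = \delta_{r,i}$, and since $\max\{0, r - d + 1\} = 0$ the conclusions $c_\alpha^0 a_{r,i} = \delta_{r,i} \in \Z$ and $|a_{r,i}| \le 1 = C_8^0$ are immediate. For the base case $r = d$, let $f(x) = c_\alpha x^d + c_{d-1}x^{d-1} + \cdots + c_0 \in \Z[x]$ be the minimal polynomial of $\alpha$ (primitive, with $c_\alpha = c_d$). From $f(\alpha) = 0$ we obtain
$$
\alpha^d \;=\; -\frac{c_{d-1}}{c_\alpha}\alpha^{d-1} - \cdots - \frac{c_0}{c_\alpha},
$$
so $a_{d,i} = -c_i / c_\alpha$. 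Then $c_\alpha a_{d,i} = -c_i \in \Z$, and by the definition $C_8 = 1 + \max_i |a_{d,i}|$ we have $|a_{d,i}| \le C_8 - 1 \le C_8^1$, matching the claim at $\max\{0, d - d + 1\} = 1$.

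For $r \geq d$ I would carry out the inductive step via $\alpha^{r+1} = \alpha \cdot \alpha^r$. If $\alpha^r = \sum_{i=0}^{d-1} a_{r,i}\alpha^i$, then
$$
\alpha^{r+1} \;=\; a_{r,d-1}\alpha^d + \sum_{i=0}^{d-2} a_{r,i}\alpha^{i+1} \;=\; a_{r,d-1}\!\sum_{i=0}^{d-1} a_{d,i}\alpha^i + \sum_{i=1}^{d-1} a_{r,i-1}\alpha^i,
$$
which yields the recurrence $a_{r+1,0} = a_{r,d-1}a_{d,0}$ and, for $1 \le i \le d-1$, $a_{r+1,i} = a_{r,d-1}a_{d,i} + a_{r,i-1}$. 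Multiplying by $c_\alpha^{r - d + 2}$ and using the inductive hypotheses $c_\alpha^{r-d+1}a_{r,i} \in \Z$ together with $c_\alpha a_{d,i} \in \Z$ shows integrality. For the size bound, since $|a_{d,i}| \le C_8 - 1$ and $|a_{r,j}| \le C_8^{r-d+1}$, both recurrence lines give
$$
|a_{r+1,i}| \;\le\; (C_8 - 1)\,C_8^{r-d+1} + C_8^{r-d+1} \;=\; C_8^{r-d+2} \;=\; C_8^{\max\{0,\,(r+1)-d+1\}},
$$
completing the induction.

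There is no real obstacle here; the only thing requiring attention is making sure the exponent $\max\{0, r - d + 1\}$ of $c_\alpha$ in the denominator tracks correctly through the recurrence (it increases by exactly one each time we invoke the relation $c_\alpha \alpha^d = -\sum c_i \alpha^i$), and that the trivially sharp constant $C_8 = 1 + \max_i |a_{d,i}|$ is large enough to absorb both terms of the recurrence simultaneously, which is precisely why the ``$1 +$'' appears in its definition.
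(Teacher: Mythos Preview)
Your induction argument is correct and complete. The paper itself does not give a proof but simply cites \cite[Proposition 2.6]{zannier}; your argument is precisely the standard recurrence proof one expects to find there, so there is nothing to add.
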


\begin{proof}
See \cite[Proposition 2.6]{zannier}.
%As in \cite[Proposition 2.6]{zannier}, our proof will proceed by induction. The result trivially holds for all $r$ such that $0 \leq r \leq d$. Now, suppose that the statement is true for some $r \geq d$. Then
%%
%\begin{align*}
%\alpha^{r + 1}
%& = \alpha\cdot (a_{r, d-1}\alpha^{d-1} + a_{r, d - 2}\alpha^{d - 2} + \cdots + a_{r, 1}\alpha +  a_{r, 0})\\
%& = a_{r, d-1}\alpha^{d} + a_{r, d - 2}\alpha^{d - 1} + \cdots + a_{r, 1}\alpha^2 +  a_{r, 0}\alpha\\
%& = a_{r, d - 1}\left(a_{d, d-1}\alpha^{d-1} + \cdots + a_{d, 1}\alpha +  a_{d, 0}\right) + a_{r, d - 2}\alpha^{d - 1} + \cdots + a_{r, 1}\alpha^2 +  a_{r, 0}\alpha\\
%& = \left(a_{r, d - 1}a_{d, d - 1} + a_{r, d - 2}\right)\alpha^{d - 1} + \cdots + (a_{r, d - 1}a_{d, 1} + a_{r, 0})\alpha + a_{r, d - 1}a_{d, 0}.
%\end{align*}
%%
%We conclude that
%%
%$$
%a_{r + 1, 0} = a_{r, d - 1}a_{d, 0}, \quad a_{r + 1, i} = a_{r, d - 1}a_{d, i} + a_{r, i - 1}, \quad 1 \leq i \leq d - 1.
%$$
%%
%Multiplying both sides of the above equalities by $c_\alpha^{r - d + 2}$, we see that $c_\alpha^{r - d + 2}a_{r + 1, i} \in \Z$ for \mbox{all $i$} such that $0 \leq i \leq d - 1$. Also, for $1 \leq i \leq d - 1$,
%%
%$$
%|a_{r + 1, i}| \leq |a_{r, d - 1}|\cdot |a_{d, i}| + |a_{r, i - 1}| \leq C_3^{r - d + 1}\cdot \max\{1, |a_{d, i}|\} + C_3^{r - d + 1} = C_3^{r - d + 2}.
%$$
%%
%An analogous estimate holds for $|a_{r + 1, 0}|$, and so we conclude that the inequality $|a_{r, i}| \leq C_3^{\max\{0, r - d + 1\}}$ is true for all $r$ and $i$.
\end{proof}

%\begin{rem}
%The result in Lemma \ref{lem:p-adic_liouville} can be improved, provided that we know $0 < \eta < 1$ such that $|F(x, y)|_p^{-1} \leq |F(x, y)|^\eta$. Then instead of the trivial lower bound (\ref{eq:p-adic_liouville_trivial_lower_bound}) we can invoke the estimate
%%
%$$
%\left|F(x, y)\right|_p \geq \frac{1}{\left|F(x, y)\right|^\eta} \geq \frac{1}{\left(H(\alpha)H(x, y)^d\right)^\eta}.
%$$
%%
%Then our lower bound on $|y\alpha - x|_p$ takes the form
%%
%$$
%|y\alpha - x|_p \geq \frac{1}{\mathcal P(c_\alpha)^{d - 1}H(\alpha)^\eta H(x, y)^{d\eta}}.
%$$
%\end{rem}

Let $P \in \mathbb C[x]$ be a polynomial of degree $d \geq 1$. The \emph{house} of $P$, denoted $\house{P}$, is defined to be
$$
\house{P} = \max\left\{|\alpha_1|, \ldots, |\alpha_d|\right\},
$$
where $\alpha_1, \ldots, \alpha_d \in \mathbb C$ are the roots of $P$. For an algebraic number $\alpha$, we define the \emph{house} of $\alpha$ as $\house{\alpha} = \house{f}$, where $f$ is the minimal polynomial of $\alpha$.

Let $\alpha$ be an algebraic number and $\mathcal O$ the ring of integers of $\Q(\alpha)$. Let $c_\alpha$ denote the leading coefficient of the minimal polynomial of $\alpha$. We define
\begin{equation} \label{eq:theta}
\theta_\alpha = \left[\mathcal O \colon \Z[c_\alpha\alpha]\right].
\end{equation}
That is, $\theta_\alpha$ is equal to the index of $\Z[c_\alpha\alpha]$ in the additive group $\mathcal O$.

\begin{lem} \label{lem:alpha_beta_bound}
Let $\alpha$ be an algebraic number of degree $d$ over $\Q$ and
$$
\beta = b_{d - 1}\alpha^{d - 1} + \cdots + b_1\alpha + b_0,
$$
where $b_0, b_1, \ldots, b_{d - 1} \in \Q$. There exists a positive number $C_9$, which depends only on $\alpha$ and $\beta$, such that
$$
\max\limits_{1 \leq i \leq d}\{|b_i|\} \leq C_9.
$$
Furthermore,
$$
\theta_\alpha c_\beta\beta \in \Z[c_\alpha\alpha],
$$
where $\theta_\alpha$ is defined in (\ref{eq:theta}). In particular, $\theta_\alpha c_\beta b_i \in \Z$ for all \mbox{$i = 0, 1, \ldots, d - 1$}.
\end{lem}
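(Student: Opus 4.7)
The plan is to split the lemma into three claims and dispatch them in order. First, since $\alpha$ has degree $d$, the set $\{1,\alpha,\dots,\alpha^{d-1}\}$ is a $\Q$-basis of $\Q(\alpha)$, so the coefficients $b_0,\dots,b_{d-1}$ in the expansion of $\beta$ are uniquely determined by $\alpha$ and $\beta$. Therefore, setting $C_9 = 1 + \max_{0\leq i\leq d-1}|b_i|$ gives a constant depending only on $\alpha$ and $\beta$, which proves the first claim.

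For the second claim, the key observation is that $c_\beta \beta$ is an algebraic integer. Indeed, if the minimal polynomial of $\beta$ over $\Q$ in $\Z[x]$ is $c_\beta x^{d'} + c_{d'-1}x^{d'-1}+\cdots + c_0$ (with $d' = [\Q(\beta):\Q]$), then multiplying the equation $c_\beta \beta^{d'} + c_{d'-1}\beta^{d'-1}+\cdots + c_0 = 0$ by $c_\beta^{d'-1}$ shows that $c_\beta\beta$ is a root of a monic polynomial in $\Z[x]$. Hence $c_\beta\beta \in \mathcal O$. By the definition (\ref{eq:theta}), the finite additive quotient group $\mathcal O / \Z[c_\alpha\alpha]$ has order $\theta_\alpha$, so multiplication by $\theta_\alpha$ annihilates it; in other words $\theta_\alpha\mathcal O \subseteq \Z[c_\alpha\alpha]$. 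Applying this inclusion to $c_\beta\beta \in \mathcal O$ yields $\theta_\alpha c_\beta \beta \in \Z[c_\alpha\alpha]$.

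For the third claim, note that the element $c_\alpha\alpha$ is a root of the monic polynomial $c_\alpha^{d-1}f(x/c_\alpha) \in \Z[x]$, where $f$ is the minimal polynomial of $\alpha$; this polynomial has degree $d$, so $\Z[c_\alpha\alpha]$ is the free $\Z$-module with basis $1, c_\alpha\alpha, (c_\alpha\alpha)^2,\dots,(c_\alpha\alpha)^{d-1}$. Consequently we can write
$$
\theta_\alpha c_\beta \beta = m_0 + m_1(c_\alpha\alpha) + m_2(c_\alpha\alpha)^2 + \cdots + m_{d-1}(c_\alpha\alpha)^{d-1}
$$
for some $m_0,\dots,m_{d-1}\in\Z$. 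On the other hand, multiplying the given expansion of $\beta$ by $\theta_\alpha c_\beta$ gives $\theta_\alpha c_\beta\beta = \sum_{i=0}^{d-1}(\theta_\alpha c_\beta b_i)\alpha^i$. Comparing these two expressions in the $\Q$-basis $\{1,\alpha,\dots,\alpha^{d-1}\}$ of $\Q(\alpha)$ gives the identity $\theta_\alpha c_\beta b_i = m_i c_\alpha^{\,i}$ for each $i$, and in particular $\theta_\alpha c_\beta b_i \in \Z$, completing the proof.

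There is no genuine obstacle here: the only point that could be missed is the standard fact that $c_\beta\beta$ is an algebraic integer, which reduces the argument to a purely formal application of the definition of the index $\theta_\alpha$.
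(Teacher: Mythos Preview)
Your proof is correct. For the second and third claims your argument is essentially the same as the paper's: the paper also observes $c_\beta\beta\in\mathcal O$ and uses that multiplication by $\theta_\alpha=[\mathcal O:\Z[c_\alpha\alpha]]$ sends $\mathcal O$ into $\Z[c_\alpha\alpha]$, then reads off integrality of $\theta_\alpha c_\beta b_i$ from the $\Q$-basis $\{1,\alpha,\dots,\alpha^{d-1}\}$ (your version, comparing against the $\Z$-basis $\{(c_\alpha\alpha)^i\}$, is in fact a shade more careful).

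The only genuine difference is in the first claim. You dispose of it tautologically: the $b_i$ are uniquely determined by $\alpha$ and $\beta$, hence so is $\max_i|b_i|$. This is logically sufficient for the statement as written. The paper instead gives an explicit construction: writing $\beta_j=\sum_i b_i\alpha_j^i$ for all conjugates $\alpha_j$, it inverts the Vandermonde matrix $V=(\alpha_j^i)$ and applies Gautschi's bound on $\|V^{-1}\|_\infty$ to obtain
\[
|b_i|\le d\cdot\house{\beta}\cdot\max_j\prod_{i\ne j}\frac{1+|\alpha_i|}{|\alpha_i-\alpha_j|}.
\]
Your route is shorter; the paper's route buys an explicit formula for $C_9$ in terms of the conjugates of $\alpha$ and $\beta$, consistent with the paper's running claim that all constants $C_k$ are computable in a concrete sense.
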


\begin{proof}
Let $\alpha = \alpha_1, \ldots, \alpha_d$ denote the conjugates of $\alpha$. For $j = 1, \ldots, d$, let
$$
\beta_j = \sum\limits_{i = 0}^{d - 1}b_i\alpha_j^i.
$$
Then each $\beta_j$ is a conjugate of $\beta = \beta_1$. Further,
\begin{equation} \label{eq:vandermonde}
\begin{pmatrix}
\beta_1\\
\beta_2\\
\vdots\\
\beta_d
\end{pmatrix}
=
\begin{pmatrix}
1 & \alpha_1 & \alpha_1^2 & \ldots & \alpha_1^{d - 1}\\
1 & \alpha_2 & \alpha_2^2 & \ldots & \alpha_2^{d - 1}\\
\vdots & \vdots & \vdots & \ddots & \vdots\\
1 & \alpha_d & \alpha_d^2 & \ldots & \alpha_d^{d - 1}
\end{pmatrix}
\cdot
\begin{pmatrix}
b_0\\
b_1\\
\vdots\\
b_{d-1}
\end{pmatrix}.
\end{equation}
Let us denote the Vandermonde matrix on the right-hand side of the above expression \mbox{by $V$.} Then it follows from the inequality (4.1) in \cite{gautschi} that
$$
\|V^{-1}\|_\infty \leq \max\limits_{1 \leq j \leq d}\prod\limits_{\substack{1 \leq i \leq d\\ i \neq j}}\frac{1 + |\alpha_i|}{|\alpha_i - \alpha_j|},
$$
where $\|\,\cdot\,\|_\infty$ denotes the matrix infinity norm.

Now, let $V^{-1} = (v_{ij})$. Then it follows from (\ref{eq:vandermonde}) that $b_i = \sum_{j = 1}^d v_{ij}\beta_j$, so
$$
|b_i| \leq \sum\limits_{j = 1}^d|v_{ij}|\cdot |\beta_j| \leq d\cdot \house{\beta}\cdot  \max\limits_{1 \leq j \leq d}\prod\limits_{\substack{1 \leq i \leq d\\ i \neq j}}\frac{1 + |\alpha_i|}{|\alpha_i - \alpha_j|}.
$$

Next, let $c_\alpha$ and $c_\beta$ denote the leading coefficients of the minimal polynomials of $\alpha$ and $\beta$, respectively. Note that $\theta_\alpha c_\beta \beta = (\#\mathcal O/\Z[c_\alpha\alpha])c_\beta \beta \in \Z[c_\alpha \alpha]$ due to the fact that $c_\beta\beta \in \mathcal O$. Finally, observe that
$$
\theta_\alpha c_\beta \beta = \sum\limits_{i = 0}^{d-1}\theta_\alpha c_\beta b_i\alpha^i 
\in \Z[c_\alpha\alpha]
$$
Since $\Z[c_\alpha\alpha] \subseteq \Z[\alpha]$, it must be the case that each coefficient $\theta_\alpha c_\beta b_i$ is an integer.
%let $b$ be the least common multiple of denominators of $b_i$'s. Then $b\beta \in \Z[\alpha] = \sum_{i = 0}^{d - 1}\hat b_i\alpha^i$, where $\gcd(\hat b_0, \ldots, \hat b_{d - 1}) = 1$. On the other hand, since $[\mathcal O \colon \Z[\alpha]]\mathcal O \subseteq \Z[\alpha]$ and $c_\beta\beta \in \mathcal O$, we have $[\mathcal O \colon \Z[\alpha]](c_\beta\beta) = \sum_{i = 0}^{d - 1}c_i\alpha^i$ for some $c_i \in \Z$. Thus
%%
%$$
%\sum_{i = 0}^{d - 1}c_i\alpha^i = [\mathcal O \colon \Z[\alpha]](c_\beta\beta) = \frac{[\mathcal O \colon \Z[\alpha]]c_\beta}{b}(b\beta) = \frac{[\mathcal O \colon \Z[\alpha]]c_\beta}{b}\sum\limits_{i = 0}^{d - 1}\hat b_i\alpha^i.
%$$
%%
%Therefore
%%
%$$
%c_i = \frac{[\mathcal O \colon \Z[\alpha]]c_\beta}{b}\hat b_i
%$$
%%
%for all $i = 0, \ldots, d - 1$. Since $c_i \in \Z$, we conclude that $b$ divides $[\mathcal O \colon \Z[\alpha]]c_\beta\hat b_i$ for every $i$. Hence it must divide
%%
%$$
%\gcd\left(\left[\mathcal O \colon \Z[\alpha]\right]c_\beta\hat b_0, \ldots, \left[\mathcal O \colon \Z[\alpha]\right]c_\beta\hat b_{d-1}\right) = \left[\mathcal O \colon \Z[\alpha]\right]c_\beta \cdot \gcd(\hat b_0, \ldots, \hat b_{d - 1}) = \left[\mathcal O \colon \Z[\alpha]\right]c_\beta.
%$$
\end{proof}

%The following lemma is a consequence of \cite[Lemma 2]{thunder2}.
%
%\begin{lem} \label{lem:thunders_lemma}
%Let $p$ be a rational prime and let $\overline{\Q_p}$ denote the algebraic closure of the field of $p$-aidc numbers $\Q_p$. Let $F(x, y) \in \Z[x, y]$ be an irreducible homogeneous polynomial of degree $d \geq 2$ and content one, and denote the roots of $F(x, 1)$ by $\alpha_1, \ldots, \alpha_d \in \overline{\Q_p}$. Let $x$ and $y$ be coprime integers. If $i_0$ is an index with
%%
%$$
%\frac{|x - \alpha_{i_0} y|_p}{\max\{1, |\alpha_{i_0}|_p\}} = \min\limits_{1 \leq i \leq d}\left\{\frac{|x - \alpha_i y|_p}{\max\{1, |\alpha_i|_p\}}\right\},
%$$
%%
%then
%%
%$$
%\frac{|x - \alpha_{i_0}y|_p}{\max\{1, |\alpha_{i_0}|_p\}} \leq \frac{|F(x, y)|_p}{|D(F)|_p^{1/2}}.
%$$
%%
%Further, if $|F(x, y)|_p < |D(F)|_p^{1/2}$, then the index $i_0$ above is unique and \mbox{$\alpha_{i_0} \in \Q_p$}.
%\end{lem}

Let $P \in \mathbb C[x]$ be a polynomial that is not identically equal to zero, with leading coefficient $c_P$. The \emph{Mahler measure} of $P$, denoted $M(P)$, is defined to be $M(P) = |c_P|$ if $P$ is the constant polynomial and
$$
M(P) = |c_P|\prod\limits_{i = 1}^d\max\{1, |\alpha_i|\}
$$
otherwise, where $\alpha_1, \ldots, \alpha_d \in \mathbb C$ are the roots of $P$. For a binary form $Q \in \mathbb C[x, y]$, we define the Mahler measure of $Q$ as $M(Q) = M(Q(x, 1))$. Finally, for an algebraic number $\alpha$, we define the Mahler measure of $\alpha$ to be $M(\alpha) = M(f)$, where $f$ is the minimal polynomial of $\alpha$.

The following lemma is a reformulation of a well-known result of Lewis and Mahler \cite{lewis-mahler}.

\begin{lem} \label{lem:lewis-mahler}
Let
$$
F(x, y) = c_dx^d + c_{d-1}x^{d-1}y + \cdots + c_0y^d
$$
be a binary form of degree $d \geq 2$ with integer coefficients such that $c_0c_d \neq 0$. Let $x_1$ and $y_1$ be nonzero integers. There exists a root $\alpha$ of $F(x, 1)$ such that
$$
\min\left\{\left|\alpha - \frac{x_1}{y_1}\right|, \left|\alpha^{-1} - \frac{y_1}{x_1}\right|\right\} \leq \frac{C_{10}|F(x_1, y_1)|}{H(x_1, y_1)^d},
$$
where
$$
C_{10} =  \frac{2^{d-1}d^{(d-1)/2}M(F)^{d-2}}{|D(F)|^{1/2}}.
$$
\end{lem}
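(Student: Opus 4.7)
The plan is to factor $F$ over $\CC$ and locate a root close to $r = x_1/y_1$. Write $F(x, y) = c_d\prod_{i = 1}^d (x - \alpha_i y)$, so that
$$|F(x_1, y_1)| = |c_d|\cdot |y_1|^d \cdot \prod_{i = 1}^d|r - \alpha_i|.$$
I would reduce to the case $|y_1| = H(x_1, y_1)$; the opposite case $|x_1| = H(x_1, y_1)$ is handled by applying the same argument to the reciprocal binary form $\tilde F(x, y) = F(y, x) = c_0 x^d + \cdots + c_d y^d$, whose roots are $\alpha_i^{-1}$ and whose Mahler measure and discriminant coincide with $M(F)$ and $|D(F)|$ respectively, and this is precisely the origin of the $|\alpha^{-1} - y_1/x_1|$ term appearing in the minimum.

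Next, let $\alpha = \alpha_j$ be a root minimizing $|r - \alpha_i|$. For every $i \neq j$, the triangle inequality $|\alpha_i - \alpha| \leq |r - \alpha_i| + |r - \alpha|$ combined with the minimality $|r - \alpha| \leq |r - \alpha_i|$ forces $|r - \alpha_i| \geq |\alpha_i - \alpha|/2$. Substituting these lower bounds into the factorization of $|F(x_1, y_1)|$ yields
$$\left|\alpha - \frac{x_1}{y_1}\right| \leq \frac{2^{d - 1}|F(x_1, y_1)|}{|c_d|\cdot |y_1|^d \cdot \prod_{i \neq j}|\alpha_i - \alpha|}.$$

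The crux is then to bound $\prod_{i \neq j}|\alpha_i - \alpha|$ from below in terms of $|D(F)|^{1/2}$ and $M(F)$. I would exploit the factorization
$$\frac{|D(F)|^{1/2}}{|c_d|^{d - 1}} = \prod_{i < k}|\alpha_i - \alpha_k| = \Bigl(\prod_{i \neq j}|\alpha_i - \alpha|\Bigr)\cdot \Bigl(\prod_{\substack{i < k\\ i, k \neq j}}|\alpha_i - \alpha_k|\Bigr).$$
The second factor equals the absolute value of the Vandermonde determinant on the $d - 1$ roots $\{\alpha_i : i \neq j\}$, and Hadamard's inequality applied row-wise, together with the elementary bound $\sum_{k = 0}^{d - 2}|\alpha_i|^{2k} \leq d\max\{1, |\alpha_i|\}^{2(d - 2)}$, yields
$$\prod_{\substack{i < k\\ i, k \neq j}}|\alpha_i - \alpha_k| \leq d^{(d - 1)/2}\left(\frac{M(F)}{|c_d|\max\{1, |\alpha|\}}\right)^{d - 2}.$$
Dividing and substituting back, using $\max\{1, |\alpha|\} \geq 1$ and $|y_1| = H(x_1, y_1)$, produces the stated bound with constant $C_{10}$.

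The main obstacle is this last step: getting $|D(F)|^{1/2}$ rather than $|D(F)|$ in the denominator of $C_{10}$. The naïve approach of lower-bounding each $\prod_{i \neq k}|\alpha_i - \alpha_k|$ individually through the symmetric identity $\prod_k \prod_{i \neq k}|\alpha_i - \alpha_k| = |D(F)|/|c_d|^{2d - 2}$ introduces a full power of $|D(F)|$ in the denominator, which is far too weak. The decisive move is to isolate the $d - 1$ factors of the Vandermonde product that involve $\alpha$ from the complementary $(d-1)(d-2)/2$ factors, which form a smaller Vandermonde to which Hadamard can be applied directly, yielding a square-root-discriminant estimate with the $d^{(d - 1)/2}$ factor appearing naturally.
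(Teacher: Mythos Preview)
Your proposal is correct. The overall structure---reduce to the case $|y_1|=H(x_1,y_1)$ and handle the complementary case by passing to the reciprocal form $F(y,x)$, whose roots are the $\alpha_i^{-1}$ and whose Mahler measure and discriminant agree with those of $F$---is exactly what the paper does.

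The difference is that the paper does not carry out your steps (3)--(5). Instead it invokes \cite[Lemma~3]{stewart2} as a black box to obtain directly
\[
\left|\alpha-\frac{x_1}{y_1}\right|\le \frac{C_{10}\,|F(x_1,y_1)|}{|y_1|^d},
\]
and then only supplies the $|x_1|>|y_1|$ case via the reciprocal form. Your argument is precisely the standard proof of that cited lemma: the triangle-inequality trick $|r-\alpha_i|\ge\tfrac12|\alpha_i-\alpha|$, the splitting of the Vandermonde product $\prod_{i<k}|\alpha_i-\alpha_k|$ into the $d-1$ factors involving $\alpha$ and the residual $(d-1)\times(d-1)$ Vandermonde, and Hadamard's inequality on the latter. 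So your route is not genuinely different, just more self-contained: you reprove the cited result rather than quote it, which has the advantage of making transparent where the constants $2^{d-1}$, $d^{(d-1)/2}$, $M(F)^{d-2}$, and $|D(F)|^{1/2}$ each originate.
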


\begin{proof}
Let $\alpha$ be a root of $F(x, 1)$ that minimizes $|\alpha - x/y|$. By \cite[Lemma 3]{stewart2},
$$
\left|\alpha - \frac{x_1}{y_1}\right| \leq \frac{C_{10}|F(x_1, y_1)|}{|y_1|^d}.
$$
If $|y_1| \geq |x_1|$, then $H(x_1, y_1) = |y_1|$, and so the result holds. Otherwise, since $c_0c_d \neq 0$, we see that the roots of $F(x, 1)$ and $F(1, x)$ are nonzero, meaning that all roots of $F(1, x)$ are of the form $\alpha^{-1}$, where $\alpha$ is a root of $F(x, 1)$. If we let $\beta^{-1}$ be a root of $F(1, x)$ that minimizes $|\beta^{-1} - y_1/x_1|$, then it follows from \mbox{\cite[Lemma 3]{stewart2}} that
$$
\left|\beta^{-1} - \frac{y_1}{x_1}\right| \leq \frac{C_{10}|F(x_1, y_1)|}{|x_1|^d}.
$$
Since $|x_1| > |y_1|$, the result follows.
\end{proof}

\begin{lem} \label{lem:unique}
Let $K = \R$ or $\Q_p$, where $p$ is a rational prime, and let $\overline K$ denote the algebraic closure of $K$. Denote the standard absolute value on $K$ by $|\quad|$. Let $\alpha$ and $\beta$ be distinct numbers in $\overline{K}$. Let $\mu$ and $C_0$ be positive real numbers.

If $x_1/y_1$ is a rational number such that $H(x_1, y_1) \geq (2C_0/|\alpha - \beta|)^{1/\mu}$ and
$$
\left|\alpha - \frac{x_1}{y_1}\right| < \frac{C_0}{H(x_1, y_1)^\mu},
$$
then
$$
\left|\beta - \frac{x_1}{y_1}\right| \geq \frac{C_0}{H(x_1, y_1)^\mu}.
$$
\end{lem}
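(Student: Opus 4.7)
The plan is to prove the contrapositive-style bound by a single application of the triangle inequality, combined with the hypothesis that $H(x_1,y_1)^\mu$ is large enough to make $|\alpha-\beta|$ itself at least $2C_0/H(x_1,y_1)^\mu$. The key observation is that we do not need to distinguish between $K=\R$ and $K=\Q_p$: in both cases the absolute value satisfies
$$
|\alpha-\beta| \leq \left|\alpha-\frac{x_1}{y_1}\right| + \left|\beta-\frac{x_1}{y_1}\right|,
$$
with the $p$-adic case following at once from the ultrametric inequality, which is sharper than the usual triangle inequality.

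First, I would rewrite the height hypothesis $H(x_1,y_1) \geq (2C_0/|\alpha-\beta|)^{1/\mu}$ in the equivalent form
$$
|\alpha-\beta| \geq \frac{2C_0}{H(x_1,y_1)^\mu}.
$$
Next, I would rearrange the triangle inequality to obtain the lower bound
$$
\left|\beta-\frac{x_1}{y_1}\right| \geq |\alpha-\beta| - \left|\alpha-\frac{x_1}{y_1}\right|.
$$
Plugging in the two hypotheses, the right-hand side is at least
$$
\frac{2C_0}{H(x_1,y_1)^\mu} - \frac{C_0}{H(x_1,y_1)^\mu} = \frac{C_0}{H(x_1,y_1)^\mu},
$$
which is precisely the desired conclusion. (In fact the inequality comes out strict, but the lemma only asks for the weaker non-strict form.)

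There is essentially no obstacle here; the lemma is a routine triangle-inequality argument whose only subtlety is that it must be phrased to cover both the Archimedean and the $p$-adic settings simultaneously, which is immediate since each absolute value under consideration satisfies the ordinary triangle inequality. The hypothesis on the height has been calibrated exactly so that $|\alpha-\beta|$ dominates twice the approximation error $C_0/H(x_1,y_1)^\mu$, leaving enough room to conclude that $x_1/y_1$ cannot be as close to $\beta$ as it is to $\alpha$.
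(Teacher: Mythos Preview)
Your proposal is correct and takes essentially the same approach as the paper: both arguments reduce to a single application of the triangle inequality, with the paper phrasing it as a proof by contradiction and you giving the equivalent direct version.
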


\begin{proof}
Suppose that the statement is false. Then it follows from the triangle inequality that
$$
|\alpha - \beta| \leq \left|\alpha - \frac{x_1}{y_1}\right| + \left|\beta - \frac{x_1}{y_1}\right| < \frac{2C_0}{H(x_1, y_1)^\mu},
$$
and so $H(x_1, y_1) < (2C_0/|\alpha - \beta|)^{1/\mu}$, leading us to a contradiction.
\end{proof}

\begin{cor} \label{cor:unique}
Let $K = \R$ or $\Q_p$, where $p$ is a rational prime, and let $\overline K$ denote the algebraic closure of $K$. Denote the standard absolute value on $K$ by $|\quad|$. Let $f(x) \in \Z[x]$ be an irreducible polynomial of degree $d \geq 2$ with roots $\alpha_1, \ldots, \alpha_d \in \overline{K}$. Let $\mu$  and $C_0$ be positive real numbers. There exists a positive number $C_{11}$, which depends only on $f$, $\mu$ and $C_0$, with the following property. If $x_1/y_1$ is a rational number such that $H(x_1, y_1) \geq C_{11}$ and
$$
\left|\alpha_i - \frac{x_1}{y_1}\right| < \frac{C_0}{H(x_1, y_1)^\mu}
$$
for some $i \in \{1, \ldots, d\}$, then
$$
\left|\alpha_j - \frac{x_1}{y_1}\right| \geq \frac{C_0}{H(x_1, y_1)^\mu}
$$
for all $j \neq i$.
\end{cor}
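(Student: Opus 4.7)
The plan is to reduce this corollary to a straightforward pairwise application of Lemma \ref{lem:unique}. The key observation is that, since $f \in \Z[x]$ is irreducible of degree $d \geq 2$ over $\Q$ and $\Q$ has characteristic zero, $f$ is separable, so its roots $\alpha_1, \ldots, \alpha_d \in \overline{K}$ are pairwise distinct. In particular, $|\alpha_i - \alpha_j| > 0$ for every pair $i \neq j$ (where $|\ \cdot\ |$ denotes the unique extension of the absolute value on $K$ to $\overline{K}$).

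With this in hand, I would define
$$
C_{11} = \max_{1 \leq i < j \leq d}\left(\frac{2C_0}{|\alpha_i - \alpha_j|}\right)^{1/\mu}.
$$
Since the roots of $f$ are determined by $f$ alone and the maximum is taken over a finite nonempty set of positive numbers, $C_{11}$ depends only on $f$, $\mu$, and $C_0$, as required.

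Then I would argue: suppose $x_1/y_1$ satisfies $H(x_1, y_1) \geq C_{11}$ and
$$
\left|\alpha_i - \frac{x_1}{y_1}\right| < \frac{C_0}{H(x_1, y_1)^\mu}
$$
for some fixed $i$. Fix any $j \neq i$. By the definition of $C_{11}$, we have $H(x_1, y_1) \geq (2C_0/|\alpha_i - \alpha_j|)^{1/\mu}$, so the hypotheses of Lemma \ref{lem:unique} are satisfied with $\alpha = \alpha_i$ and $\beta = \alpha_j$. The conclusion of that lemma gives exactly
$$
\left|\alpha_j - \frac{x_1}{y_1}\right| \geq \frac{C_0}{H(x_1, y_1)^\mu}.
$$
Since $j \neq i$ was arbitrary, this holds for all such $j$, finishing the proof. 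There is no substantive obstacle: the only point that might warrant a brief remark is the separability of $f$ and the existence of a canonical extension of $|\ \cdot\ |$ from $K$ to $\overline{K}$, both of which are standard in characteristic zero.
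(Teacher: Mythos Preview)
Your proposal is correct and is precisely the intended derivation: the paper states this result as an immediate corollary of Lemma~\ref{lem:unique} without giving a separate proof, and your argument---taking $C_{11}$ to be the maximum of the thresholds $(2C_0/|\alpha_i-\alpha_j|)^{1/\mu}$ over all pairs $i\neq j$ and applying the lemma pairwise---is exactly how one unpacks it.
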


\begin{lem} \label{lem:thue-siegel}
\emph{(The Thue-Siegel Principle \cite{bombieri-mueller})} Let $K = \R$ or $\Q_p$, where $p$ is a rational prime, and denote the standard absolute value on $K$ by $|\quad|$. Let $\alpha_1 \in K$ be an algebraic number of degree $d \geq 3$ over $\Q$ and let $\alpha_2 \in \Q(\alpha_1)$ have degree $d$. Let $t$ and $\tau$ be such that
\begin{equation} \label{eq:thue-siegel_intervals}
\frac{2 + \sqrt{2d^3 + 2d^2 - 4d}}{d(d + 1)} < t < \sqrt{\frac{2}{d}}, \quad \sqrt{2 - dt^2} < \tau < t - \frac{2}{d},
\end{equation}
and put $\lambda = 2/(t - \tau)$, so that $\lambda < d$. Define
$$
A_i = \frac{t^2}{2 - dt^2}\left(\log M(\alpha_i) + \frac{d}{2}\right) \quad \text{for $i = 1, 2$}.
$$
Let $x_1/y_1$ and $x_2/y_2$ be rational numbers in lowest terms that satisfy the inequalities
$$
\left|\alpha_i - \frac{x_i}{y_i}\right| < \frac{1}{\left(4e^{A_i}H(x_i, y_i)\right)^\lambda} \quad \text{for $i = 1, 2$.}
$$
Then
$$
\log(4e^{A_2}) + \log H(x_2, y_2) \leq \delta^{-1}\left(\log(4e^{A_1}) + \log H(x_1, y_1)\right),
$$
where
$$
\delta = \frac{dt^2 + \tau^2 - 2}{d - 1}.
$$
\end{lem}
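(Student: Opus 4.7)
The plan is the classical Thue-Siegel method in the sharp Bombieri-Mueller form: construct an auxiliary polynomial in two variables vanishing to controlled order at $(\alpha_1, \alpha_2)$, argue that the same polynomial cannot vanish to nearly the same order at $(x_1/y_1, x_2/y_2)$, and then sandwich an appropriate derivative of it between an analytic upper bound (from the approximation hypotheses) and a product-formula lower bound.

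First I would fix positive integers $r_1, r_2$ with ratio $r_2/r_1$ chosen close to $(\log(4e^{A_1}) + \log H(x_1,y_1))/(\log(4e^{A_2}) + \log H(x_2,y_2))$, and apply Siegel's lemma (Lemma \ref{lem:siegels_lemma}) to produce a nonzero polynomial $P(X_1, X_2) \in \Z[X_1, X_2]$ of bidegree at most $(r_1, r_2)$ whose index at $(\alpha_1, \alpha_2)$, measured with weights $(1/r_1, 1/r_2)$, is at least $t$. The number of monomials with index at least $t$ is asymptotic to $(1 - dt^2/2)r_1 r_2$, and the linear system encoding the vanishing conditions (made rational by symmetrizing over the Galois conjugates of $\alpha_1$ and $\alpha_2$) has strictly fewer than $r_1 r_2$ equations precisely when $t < \sqrt{2/d}$. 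The coefficient bound coming out of Siegel's lemma takes the shape $\exp(A_1 r_1 + A_2 r_2 + o(r_1 + r_2))$, where the factors $A_i$ absorb the Mahler-measure contributions together with the factor $d/2$ coming from the house estimates of Lemma \ref{lem:bound_on_coefficients_2} and Lemma \ref{lem:alpha_beta_bound}.

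Second, I would invoke a Dyson-type non-vanishing lemma: under the hypothesis $\tau > \sqrt{2 - dt^2}$, the index of $P$ at the rational point $(x_1/y_1, x_2/y_2)$ with the same weights is at most $\tau$. Pick nonnegative integers $i_1, i_2$ with $i_1/r_1 + i_2/r_2$ close to $\tau$ and $D_{i_1, i_2}P(x_1/y_1, x_2/y_2) \neq 0$. Taylor-expanding $D_{i_1, i_2}P$ around $(\alpha_1, \alpha_2)$ and feeding in the approximation hypothesis $|\alpha_j - x_j/y_j| < (4e^{A_j}H(x_j, y_j))^{-\lambda}$ with $\lambda = 2/(t - \tau)$ gives
$$
|D_{i_1, i_2}P(x_1/y_1, x_2/y_2)| \ll H(P)\prod_{j=1}^{2}\bigl(4e^{A_j}H(x_j, y_j)\bigr)^{-\lambda(t - i_j/r_j)}.
$$
On the other hand, $y_1^{r_1}y_2^{r_2}\cdot D_{i_1, i_2}P(x_1/y_1, x_2/y_2)$ is a nonzero rational integer, so its absolute value is at least one, producing the matching lower bound.

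Taking logarithms, combining the two bounds, and optimizing over the choice of $r_1, r_2, i_1, i_2$ yields the stated conclusion with $\delta = (dt^2 + \tau^2 - 2)/(d - 1)$; the factor $d - 1$ in the denominator traces to the number of conjugates of $\alpha_i$ that do \emph{not} lie near $x_i/y_i$ (separated by Corollary \ref{cor:unique}), which contribute larger factors and must be discounted after invoking Liouville-type estimates (Lemma \ref{lem:liouville}) on the other conjugates. The main obstacle is Step 2: the sharp non-vanishing (Dyson) lemma is the deepest ingredient, and the elaborate lower bound on $t$ in (\ref{eq:thue-siegel_intervals}) is dictated precisely by the hypotheses of that lemma. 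Getting the constant $\delta$ to come out exactly as stated — rather than with a weaker exponent — requires careful bookkeeping of the lower-order terms in both the Siegel construction and the Taylor expansion, which is where a naive version of the argument loses the sharpness needed in the rest of the paper.
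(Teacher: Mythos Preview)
Your proposal sketches the full Thue--Siegel--Bombieri--Mueller argument from scratch: Siegel's lemma to build the auxiliary polynomial, a Dyson-type non-vanishing step, and the final comparison of analytic and arithmetic bounds. That is indeed the machinery underlying the result, and your outline is broadly faithful to it. However, the paper does not reprove any of this. Its proof is essentially a \emph{citation}: the case $|\alpha_1| \le 1$, $|\alpha_2| \le 1$ is read off directly from \cite[Section~II]{bombieri-mueller} (with a short list of remarks borrowed from \cite{bombieri-schmidt} about admissibility of the data and minor typos in the source). The only genuine work in the paper's proof is a reduction covering the cases $|\alpha_i| > 1$, where one replaces $\alpha_i$ by $\alpha_i^{-1}$ and $x_i/y_i$ by $y_i/x_i$, checking separately in the archimedean and $p$-adic settings that the approximation hypothesis with constant $4$ transfers to one with constant $3$ for the inverted data; this is why the statement carries a $4$ rather than the $3$ appearing in Bombieri--Mueller.

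So the two approaches differ in kind: the paper treats the lemma as a black-box import and spends its effort on the inversion trick, while you propose to rebuild the black box. Your sketch does not mention the $|\alpha_i| > 1$ reduction or the $p$-adic subtlety (showing $|x_i|_p = 1$ via the strong triangle inequality), which is precisely what the paper supplies. Conversely, what you sketch is exactly what the paper outsources. One small caution on your sketch itself: your explanation that the $d-1$ in $\delta$ ``traces to the number of conjugates of $\alpha_i$ that do not lie near $x_i/y_i$'' via Liouville-type separation is not how the constant arises in Bombieri--Mueller; there the $d-1$ comes out of the Dyson/Esnault--Viehweg inequality bounding the total weighted index over all $d$ conjugate algebraic points together with the single rational point, not from a Liouville argument on the side.
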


\begin{proof}
Since $d \geq 3$, the intervals in (\ref{eq:thue-siegel_intervals}) are guaranteed to be non-empty, so the statement is not vacuously true. Since
$$
\left|\alpha_i - \frac{x_i}{y_i}\right| < \frac{1}{\left(4e^{A_i}H(x_i, y_i)\right)^\lambda} < \frac{1}{\left(3e^{A_i}H(x_i, y_i)\right)^\lambda} \quad \text{for $i = 1, 2$},
$$
the case when $|\alpha_1| \leq 1$ and $|\alpha_2| \leq 1$ follows directly from \mbox{\cite[Section II]{bombieri-mueller}}. More precisely, the comment on \mbox{p. 184} of \cite{bombieri-mueller} states that the triple $(A_1, A_2, \tau)$ is admissible for the data $(\alpha_1, \alpha_2, x_1/y_1, x_2/y_2, t, \vartheta, \delta)$, where we take $\vartheta = 1$. Note also that the comments on p. 74 of \cite{bombieri-schmidt} apply in our situation:
\begin{enumerate}[(i)]
\item the hypothesis $K_{\tilde v} = k_v$ is not used in the proof and therefore may be omitted;
\item $c(\vartheta t) \leq \log 3$;
\item the chosen value for $A_i$ implies a fortiori $\left|\alpha_i - \frac{x_i}{y_i}\right| < \vartheta(t - \tau)$ for $i = 1, 2$;
\item $h(x_i/y_i) = H(x_i, y_i)$ for $i = 1, 2$;
\item the exponent in (5A), p. 179 of \cite{bombieri-mueller} should be $2\vartheta/(t - \tau)$, not $2\vartheta^{-1}/(t - \tau)$.
\end{enumerate}

Next, we consider the case when $|\alpha_i| > 1$ for some $i \in \{1, 2\}$. If $K = \mathbb R$, then $|x_i/y_i| > 1$, and so
$$
\left|\alpha_i^{-1} - \frac{y_i}{x_i}\right| < |\alpha_i|^{-1}|y_i/x_i| \left(4e^{A_i}H(x_i, y_i)\right)^{-\lambda} < \left(3e^{A_i}H(x_i, y_i)\right)^{-\lambda}.
$$
If $K = \mathbb Q_p$, then $|y_i| \leq 1$ and we claim that $|x_i| = 1$. For suppose not and $|x_i| < 1$. Since $x_i/y_i$ is in lowest terms it must be the case that $|y_i| = 1$, so
$$
|x_i\alpha_i^{-1}| = |x_i| \cdot |\alpha_i^{-1}| < 1 = |y_i|.
$$
Since $|x_i\alpha_i^{-1}| \neq |y_i|$, it follows from the strong triangle inequality that
$$
|x_i\alpha_i^{-1} - y_i| = \max\left\{|x_i\alpha_i^{-1}|,\ |y_i|\right\} = \max\left\{|x_i\alpha_i^{-1}|,\ 1\right\} \geq 1.
$$
Thus,
$$
1 \leq |x_i\alpha_i^{-1} - y_i| = |y_i| |\alpha_i^{-1}| \left|\alpha_i - \frac{x_i}{y_i}\right| < \left(4e^{A_i}H(x_i, y_i)\right)^{-\lambda},
$$
which is impossible. Hence $|x_i| = 1$, so
$$
\left|\alpha_i^{-1} - \frac{y_i}{x_i}\right| < |\alpha_i|^{-1}|y_i/x_i| \left(4e^{A_i}H(x_i, y_i)\right)^{-\lambda} < \left(3e^{A_i}H(x_i, y_i)\right)^{-\lambda}.
$$
We conclude that, as long as $\left|\alpha_i - \frac{x_i}{y_i}\right| < \left(4e^{A_i}H(x_i, y_i)\right)^{-\lambda}$ for $i = 1, 2$, the inequalities
$$
\left|\alpha_i - \frac{x_i}{y_i}\right| < \frac{1}{\left(3e^{A_i}H(x_i, y_i)\right)^{\lambda}} \quad \text{and} \quad \left|\alpha_i^{-1} - \frac{y_i}{x_i}\right| < \frac{1}{\left(3e^{A_i}H(x_i, y_i)\right)^{\lambda}}
$$
hold whenever $|\alpha_i| > 1$ for some $i \in \{1, 2\}$. Consequently, we can always choose $r, s \in \{-1, 1\}$ so that $|\alpha_1^r| \leq 1$, $|\alpha_2^s| \leq 1$ and $(A_1, A_2, \tau)$ is admissible for the data $\left(\alpha_1^r, \alpha_2^s, (x_1/y_1)^r, (x_2/y_2)^s, t, \vartheta, \delta\right)$. The result now follows from \mbox{\cite[Section II]{bombieri-mueller}}.
\end{proof}

\section{Minimal Pairs} \label{sec:minimal_pairs}

Let $\alpha$ be an algebraic number of degree $d$ over $\Q$ and let $\beta \in \Q(\alpha)$ be irrational. With a pair $(\alpha, \beta)$ we associate two polynomials $P, Q \in \Z[x]$, which possess certain minimal properties listed in Definition \ref{defin:minimal_pair}. The properties of minimal pairs summarized in Proposition \ref{prop:properties_of_minimal_pairs} will play a crucial role in proofs of Archimedean and non-Archimedean gap principles, which are outlined in Sections \ref{sec:generalized_archimedean_gap_principle} and \ref{sec:generalized_non-archimedean_gap_principle}, respectively.

\begin{defin} \label{defin:minimal_pair}
Let $\alpha$ be an algebraic number of degree $d$ and let $\beta \in \Q(\alpha)$ be irrational. We say that two univariate polynomials, $P$ and $Q$, not both identically equal to zero, form a \emph{minimal pair} for $(\alpha, \beta)$, if they satisfy the following four properties:
\begin{enumerate}[(1)]
\item $P, Q \in \Z[x]$.

\item $P(\alpha) + \beta Q(\alpha) = 0$.

\item The quantity $\max\{\deg P, \deg Q\}$ is minimal among all polynomials satisfying properties (1) and (2).

\item The quantity $\max\{H(P), H(Q)\}$ is minimal among all polynomials satisfying properties (1), (2) and (3).
\end{enumerate}
If $P, Q$ is a minimal pair for $(\alpha, \beta)$, we write
$$
r(\alpha, \beta) = \max\{\deg P, \deg Q\}.
$$
\end{defin}

If $P, Q$ is a minimal pair for $(\alpha, \beta)$ then $-P, -Q$ is also a minimal pair for $(\alpha, \beta)$. This already demonstrates that minimal pairs are not unique. Furthermore, the uniqueness is not guaranteed even if we impose an additional condition that the leading coefficient of $Q$ is equal to one. Indeed, let
$$
\alpha = 2\cos\left(\frac{2\pi}{15}\right) \quad \text{and} \quad \beta = 2\cos\left(\frac{4\pi}{15}\right).
$$
Then both
$$
P_1(x) = -x^2 + 2, \quad Q_1(x) = 1
$$
and
$$
P_2(x) = -x^2 + 2x - 1, \quad Q_2(x) = x^2 - x - 1
$$
are minimal pairs for $(\alpha, \beta)$.

If $P, Q$ is a minimal pair for $(\alpha, \beta)$, then we can define a polynomial
$$
R(x, y) = P(x) + yQ(x).
$$
Polynomials of such form were used by Thue \cite{thue} for the purpose of establishing the first instance of the Thue-Siegel principle \cite{bombieri-mueller}. More precisely, they were constructed as to achieve high vanishing at the point $(\alpha, \alpha)$, i.e., \mbox{$D_iR(\alpha, \alpha) = 0$} for \mbox{$i = 0, 1, \ldots, \ell$} for some large $\ell$ (see the exposition of Thue's method in \mbox{\cite[Chapter 2]{zannier}}). In turn, we construct $R(x, y)$ so to achieve $R(\alpha, \beta) = 0$ for arbitrary irrational $\beta \in \Q(\alpha)$ for the purpose of obtaining a generalized gap principle. The following proposition summarizes various properties of minimal pairs.

\begin{prop} \label{prop:properties_of_minimal_pairs}
Let $\alpha$ be an algebraic number of degree $d$ over $\Q$ and let \mbox{$\beta \in \Q(\alpha)$} be irrational. Let $P, Q$ be a minimal pair for $(\alpha, \beta)$ and put $r = r(\alpha, \beta)$. Then the polynomials $P$, $Q$, and their Wronskian $W = PQ' - QP'$ possess the following properties.
\begin{enumerate}
\item
\begin{equation} \label{eq:r_bound}
1 \leq r \leq \left\lfloor d/2\right\rfloor.
\end{equation}

\item $P$ and $Q$ are coprime.

\item If $\hat P, \hat Q \in \Z[x]$ satisfy $\hat P(\alpha) + \beta \hat Q(\alpha) = 0$ and $\max\{\deg \hat P, \deg \hat Q\} \leq d - 1 - r$, then $\hat P = GP$, $\hat Q = GQ$ for some $G \in \Z[x]$.

\item There exists a positive number $C_{12}$, which depends only on $\alpha$ and $\beta$, such that
\begin{equation} \label{eq:height_PQ}
\max\{H(P), H(Q)\} \leq C_{12}.
\end{equation}

%Let $\alpha = \alpha_1, \ldots, \alpha_d$ be the conjugates of $\alpha$. Let $c_\alpha, c_\beta$ be the leading coefficients of the minimal polynomials of $\alpha, \beta$, respectively. Then
%
%\begin{equation} \label{eq:height_PQ}
%\begin{array} {l}
%\max\{H(P), H(Q)\} \leq\\
%2^{-d/2}\left(d(d + 2)^2
%(c_\alpha + H(\alpha))^{d/2} \cdot \theta_\alpha c_\beta \house{\beta} \cdot \max\limits_{1 \leq j \leq d}\prod\limits_{\substack{1 \leq i \leq d\\ i \neq j}}\frac{1 + |\alpha_i|}{|\alpha_i - \alpha_j|}\right)^d,
%\end{array}
%\end{equation}
%
%where $\theta_\alpha$ is defined in (\ref{eq:theta}).

\item If $\alpha \in \CC$, there exists a positive real number $C_{13}$, which depends only on $\alpha$ and $\beta$, such that
\begin{equation} \label{eq:W_alpha_archimedean_lower_bound}
|W(\alpha)| \geq C_{13}.
\end{equation}
Similarly, if $\alpha \in \Q_p$ for some rational prime $p$, there exists a positive real number $C_{14}$, which depends only on $\alpha$ and $\beta$, such that
\begin{equation} \label{eq:W_alpha_non-archimedean_lower_bound}
|W(\alpha)|_p \geq C_{14}.
\end{equation}
%
%\begin{equation} \label{eq:W_alpha_non-archimedean_lower_bound}
%|W(\alpha)|_p \geq (d + 1)^{-(2r - 1)/2}(2r)^{-3d/2}H(\alpha)^{-(2r + d - 2)}(r\max\{H(P), H(Q)\}^2)^{-d}.
%\end{equation}
\end{enumerate}
\end{prop}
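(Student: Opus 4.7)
The plan is to establish the five claims in order, using Siegel's lemma (Lemma \ref{lem:siegels_lemma}) for the quantitative parts (1) and (4), and the minimality built into Definition \ref{defin:minimal_pair} for the structural parts (2), (3), and (5).

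For (1), the bound $r \geq 1$ is immediate: if $P, Q$ were both constants then $Q \neq 0$ (else $P = 0$, contradicting the pair being nonzero), whence $\beta = -P/Q \in \Q$, violating the irrationality of $\beta$. For the upper bound, I would expand the vanishing condition for a trial pair of degree at most $k$ in the basis $1, \alpha, \ldots, \alpha^{d-1}$, reducing powers of $\alpha$ via Lemma \ref{lem:bound_on_coefficients_2} and expanding $\beta$ via Lemma \ref{lem:alpha_beta_bound}. After clearing denominators by $\theta_\alpha c_\beta c_\alpha^{\max(0, k - d + 1)}$, this becomes $d$ homogeneous integer linear equations in $N = 2(k+1)$ unknowns, with coefficients bounded in terms of $\alpha, \beta$ alone. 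Taking $k = \lfloor d/2 \rfloor$ gives $N > d$, and Siegel's lemma produces a nonzero integer pair $(\tilde P, \tilde Q)$ of degree $\tilde r \leq \lfloor d/2 \rfloor$ with heights at most some constant $S$ depending only on $\alpha, \beta$, proving $r \leq \lfloor d/2 \rfloor$.

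For (2), a common polynomial factor $D$ of $P, Q$ of positive degree would have $D(\alpha) \neq 0$ (as $\deg D \leq r < d$), so dividing out would yield a pair of strictly smaller degree satisfying the vanishing condition, violating the minimality of $r$; a common integer factor $|D| > 1$ would shrink the heights, violating minimality of $\max\{H(P), H(Q)\}$. For (3), the degree constraint forces $\hat Q(\alpha) \neq 0$ unless $\hat Q = 0$ (in which case $\hat P = 0$ and $G = 0$ works), and similarly for $Q$. Writing $\beta = -P(\alpha)/Q(\alpha) = -\hat P(\alpha)/\hat Q(\alpha)$ gives $(P\hat Q - \hat P Q)(\alpha) = 0$, and since this polynomial has degree at most $r + (d-1-r) = d - 1 < d$, it vanishes identically. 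Coprimality of $P, Q$ in $\Q[x]$ then produces $\hat P = GP$, $\hat Q = GQ$ for some $G \in \Q[x]$; writing $G = G_0/n$ with $G_0 \in \Z[x]$ primitive and $n \geq 1$ coprime to $c(G_0)$, Gauss's lemma applied to $n \hat P = G_0 P$ forces $n \mid c(P)$, and similarly $n \mid c(Q)$, so $n = 1$ by (2), giving $G \in \Z[x]$.

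For (4), combine the Siegel pair with (3): when $\tilde r = r$ the bound on $\max\{H(P), H(Q)\}$ follows directly from $S$, while when $\tilde r > r$ we have $r + \tilde r \leq 2\lfloor d/2 \rfloor - 1 \leq d - 1$, so (3) applies and gives $\tilde P = GP$, $\tilde Q = GQ$ for some $G \in \Z[x]$ with $M(G) \geq 1$; multiplicativity of the Mahler measure together with the Mignotte-type inequalities $M(P) \leq \sqrt{r+1}\, H(P)$ and $H(P) \leq 2^r M(P)$ then bounds $\max\{H(P), H(Q)\}$ by a constant $C_{12}$ depending only on $\alpha, \beta$. For (5), if $W \equiv 0$ then $P/Q$ is constant in $\Q(x)$, forcing $\beta \in \Q$; thus $W \neq 0$, and $\deg W \leq 2r - 1 < d$ ensures $W(\alpha) \neq 0$. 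The heights and degree of $W$ are bounded by constants depending only on $\alpha, \beta$ via (1) and (4), so $\gamma = W(\alpha)$ ranges over a finite set of nonzero algebraic numbers determined by $(\alpha, \beta)$, and applying Lemmas \ref{lem:liouville} and \ref{lem:p-adic_liouville} with $x = 0, y = 1$ gives $|W(\alpha)| \geq C_{13}$ and $|W(\alpha)|_p \geq C_{14}$ respectively. I expect the main technical nuisance to be the integrality step in (3), where Gauss's lemma must be applied carefully to produce $G \in \Z[x]$ rather than merely $\Q[x]$.
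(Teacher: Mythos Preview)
Your outline is correct and follows the paper's strategy for parts (1)--(4) almost exactly: Siegel's lemma on the $d$ linear conditions in $2(\lfloor d/2\rfloor+1)$ unknowns for (1), cancellation of a common factor against degree minimality for (2), the identity $P\hat Q - Q\hat P \equiv 0$ forced by degree $<d$ for (3), and the Siegel pair combined with (3) for (4). Your case split in (4) (on $\tilde r = r$ versus $\tilde r > r$) is a harmless repackaging of the paper's split (on $\tilde r \le d-1-r$ versus $\tilde r > d-1-r$), and your use of Mahler-measure multiplicativity plays the same role as the paper's appeal to Gelfond's lemma. Your treatment of (2)--(3) is in fact more careful than the paper's: you explicitly rule out a common integer content via height minimality and then push $G$ from $\Q[x]$ into $\Z[x]$ via Gauss's lemma, a point the paper leaves implicit.

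The only genuine methodological difference is in (5). The paper, after checking $W\not\equiv 0$ and $\deg W<d$ so $W(\alpha)\neq 0$, derives \emph{explicit} lower bounds: in the Archimedean case it uses that $c_\alpha^{\deg W}W(\alpha)$ is a nonzero algebraic integer and bounds $|W(\alpha)|^{-1}$ by the product of the conjugates; in the $p$-adic case it writes $\psi(\alpha)W(\alpha)=\Res(f,W)$ and bounds $|W(\alpha)|_p$ below via $|\Res(f,W)|^{-1}$. Your route---observe that $W$ lies in a finite set of integer polynomials determined by the degree and height bounds from (1) and (4), hence $W(\alpha)$ lies in a finite set of nonzero algebraic numbers, and take the minimum---is perfectly valid and arguably cleaner, but it is non-constructive. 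The invocation of Lemmas~\ref{lem:liouville} and~\ref{lem:p-adic_liouville} with $x=0$, $y=1$ is a slightly roundabout way of saying ``a nonzero algebraic number has positive absolute value''; you could simply take $C_{13}$ and $C_{14}$ to be the minimum of $|W(\alpha)|$ (resp.\ $|W(\alpha)|_p$) over the finite set of admissible $W$ without appealing to those lemmas at all.
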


\begin{proof}
Let us prove each of the above statements.

\begin{enumerate}
\item First, we prove that $r \geq 1$. If not, then $r = \max\{\deg P, \deg Q\} = 0$, which means that $P = p$ and $Q = q$ for some integers $p$ and $q$, not both equal to zero. If $q \neq 0$, then $P(\alpha) + \beta Q(\alpha) = 0$ implies $\beta = -p/q$, which contradicts the fact that $\beta$ is irrational. If $q = 0$, then $p = 0$, which is impossible, since we assumed that both $p$ and $q$ cannot be equal to zero. Thus, $r \geq 1$.

Next, we prove that $r \leq s$, where $s = \lfloor d/2\rfloor$. Write
\begin{equation} \label{eq:P_hat_Q_hat}
\hat P(x) = \sum\limits_{i = 0}^s a_ix^i \quad \text{and} \quad \hat Q(x) = \sum\limits_{i = 0}^s a_{s + 1 + i}x^i.
\end{equation}
We view the $2s + 2$ integer coefficients $a_0, \ldots, a_{2s + 1}$ as variables. Since $\alpha$ is algebraic of degree $d$ over $\Q$ and $\beta \in \Q(\alpha)$, the equation $\hat P(\alpha) + \beta \hat Q(\alpha) = 0$ defines $d$ linear equations over $\Q$, which we will define in the proof of Part 4. Since $2s + 2 > d$, the existence of a non-trivial integer solution to the system of $d$ linear equations over $\Q$ in $2s + 2$ variables is guaranteed by Lemma \ref{lem:siegels_lemma}. Therefore, there exist polynomials $\hat P, \hat Q$, not both zero, such that $\max\{\deg \hat P, \deg \hat Q\} \leq s$. Consequently, the polynomials $P, Q$ with $\max\{\deg P, \deg Q\}$ minimal satisfy
$$
\max\{\deg P, \deg Q\} \leq \max\{\deg \hat P, \deg \hat Q\} \leq s.
$$

\item Let $G = \gcd(P, Q)$ and suppose that $\deg G \geq 1$. Then certainly $G(\alpha) \neq 0$, because $\alpha$ has degree $d$ and $\deg G \leq \deg P < d$. Put $\hat P = P/G$ and $\hat Q = Q/G$. Then
$$
\hat P(\alpha) + \beta\hat Q(\alpha) = 0
$$
and
$$
\max\{\deg \hat P, \deg \hat Q\} < \max\{\deg P, \deg Q\},
$$
in contradiction to our assumption that $\max\{\deg P, \deg Q\}$ is minimal. This means that $\deg G = 0$, and so $P$ and $Q$ are coprime.

\item Since
$$
P(\alpha) + \beta Q(\alpha) = \hat P(\alpha) + \beta \hat Q(\alpha) = 0,
$$
we have
$$
P(\alpha)\hat Q(\alpha) - Q(\alpha)\hat P(\alpha) = 0.
$$
Since $\alpha$ has degree $d$ and
\begin{align*}
\deg\left(P\hat Q - Q\hat P\right)
& \leq \max\{\deg P, \deg Q\} + \max\{\deg \hat P, \deg \hat Q\}\\
& \leq r + (d - 1 - r)\\
& < d,
\end{align*}
we conclude that $P\hat Q - Q\hat P$ is identically equal to zero. If $\hat Q = 0$, then $\hat P = 0$, and so $G = 0$. Otherwise $P/Q = \hat P/\hat Q$. If we put $G = \gcd(\hat P, \hat Q)$, then it becomes clear that $\hat P = GP$, $\hat Q = GQ$.

\item Define $b_i, c_{k, i} \in \Q$ as follows:
$$
\alpha^k = c_{k, d - 1}\alpha^{d - 1} + \cdots + c_{k, 1}\alpha + c_{k, 0},
$$
$$
\beta = b_{d - 1}\alpha^{d - 1} + \cdots + b_1\alpha + b_0.
$$ 
Let $\hat P, \hat Q$ be as in (\ref{eq:P_hat_Q_hat}). Then,
\begin{align*}
\hat P(\alpha) + \beta\hat Q(\alpha)
& = \sum\limits_{i = 0}^s a_i\alpha^i + \left(\sum\limits_{i = 0}^{d - 1}b_i\alpha^i\right)\cdot\left(\sum\limits_{j = 0}^s a_{s + 1 + j}\alpha^j\right)\\
& = \sum\limits_{i = 0}^sa_i\alpha^i + \sum\limits_{j = 0}^sa_{s + 1 + j}\sum\limits_{i = 0}^{d - 1}b_i\alpha^{i + j}\\
& = \sum\limits_{i = 0}^sa_i\alpha^i + \sum\limits_{j = 0}^sa_{s + 1 + j}\left(\sum\limits_{i = j}^{d - 1}b_{i-j}\alpha^i + \sum\limits_{k = d}^{d - 1 + j}b_{k - j}\alpha^k\right)\\
& = \sum\limits_{i = 0}^sa_i\alpha^i + \sum\limits_{j = 0}^sa_{s + 1 + j}\sum\limits_{i = j}^{d - 1}b_{i-j}\alpha^i + \sum\limits_{j = 0}^sa_{s + 1 + j}\sum\limits_{k = d}^{d - 1 + j}b_{k - j}\sum\limits_{i = 0}^{d - 1}c_{k, i}\alpha^i\\
& = \sum\limits_{i = 0}^s\left(a_i + \sum\limits_{j = 0}^ib_{i - j}a_{s + 1 + j} + \sum\limits_{j = 0}^s\sum\limits_{k = d}^{d - 1 + j}b_{k - j}c_{k, i}a_{s + 1 + j}\right)\alpha^i +\\
& + \sum\limits_{i = s + 1}^{d - 1}\sum\limits_{j = 0}^s\left(b_{i-j} + \sum\limits_{k = d}^{d - 1 +  j}b_{k - j}c_{k, i}\right)a_{s + 1 + j}\alpha^i\\
& = \sum\limits_{i = 0}^{d - 1}L_i(\vec a)\alpha^i,
\end{align*}
where $\vec a = (a_0, a_1, \ldots, a_{2s + 1})$ and
\small
$$
L_i(\vec a) =
\begin{cases}
a_i + \sum\limits_{j = 0}^i\left(b_{i - j} + \sum\limits_{k = d}^{d - 1 + j}b_{k - j}c_{k, i}\right)a_{s + 1 + j} + \sum\limits_{j = i + 1}^s\left(\sum\limits_{k = d}^{d - 1 + j}b_{k - j}c_{k, i}\right)a_{s + 1 + j} & \text{if $i \leq s$,}\\
\sum\limits_{j = 0}^s\left(b_{i-j} + \sum\limits_{k = d}^{d - 1 +  j}b_{k - j}c_{k, i}\right)a_{s + 1 + j} & \text{if $i \geq s + 1$.}
\end{cases}
$$
\normalsize
We conclude that the equation $\hat P(\alpha) + \beta \hat Q(\alpha) = 0$ is equivalent to the system of $d$ linear equations $L_0(\vec a) = \ldots = L_{d - 1}(\vec a) = 0$ over $\Q$.

Put
$$
B = \max\limits_{0 \leq i \leq d - 1}\{|b_i|\} \quad \text{and} \quad C = \max\limits_{\substack{0 \leq k \leq d - 1 + s\\0 \leq i \leq d - 1}}\{|c_{k, i}|\}.
$$
Then we can bound the (rational) coefficients of $L_i(\vec a)$ from above by $B(1 + sC)$:
$$
\left|b_{i - j} + \sum\limits_{k = d}^{d - 1 + j}b_{k - j}c_{k, i}\right| \leq B + jBC \leq B(1 + sC),
$$
$$
\left|\sum\limits_{k = d}^{d - 1 + j}b_{k - j}c_{k, i}\right| \leq jBC \leq sBC < B(1 + sC).
$$

By Lemma \ref{lem:alpha_beta_bound} we have $\theta_\alpha c_\beta b_i \in \Z$ for all $i$ and $B \leq C_9$. Further, by Lemma \ref{lem:bound_on_coefficients_2} we have $c_\alpha^{\max\{0, k - d + 1\}}c_{k, i} \in \Z$ for all $i, k$ and $C \leq C_8^s$.
%
%$$
%C \leq \left(1 + c_\alpha^{-1}H(\alpha)\right)^s
%$$
%
Hence the linear forms
$$
\hat L_i(\vec a) = \theta_\alpha c_\beta c_\alpha^sL_i(\vec a)
$$
have integer coefficients and the size of these coefficients is at most
$$
A = \theta_\alpha c_\beta c_\alpha^sC_9(1 + sC_8^s).
$$
By Lemma \ref{lem:siegels_lemma},
\small
\begin{align*}
\max\{H(\hat P), H(\hat Q)\}
& = \max\limits_{0 \leq i \leq 2s + 1}\{|a_i|\}\\
& \leq \left((2s + 2)A\right)^{d/(2s + 2 - d)}\\
& \leq \left((2s + 2)\theta_\alpha c_\beta c_\alpha^sC_9(1 + sC_8^s)\right)^{d/(2s + 2 - d)}.
\end{align*}
\normalsize

Now that we know an upper bound on $\max\{H(\hat P), H(\hat Q)\}$, we can determine an upper bound on $\max\{H(P), H(Q)\}$ by considering the following two cases.

\begin{itemize}
\item[\textbf{Case 1.}] Suppose that $\max\{\deg \hat P, \deg \hat Q\} > d - 1 - r$. Then it follows from Part 1 and the inequality $\max\{\deg \hat P, \deg  \hat Q\} \leq \lfloor d/2\rfloor$ that
$$
d \leq r + \max\{\deg \hat P, \deg \hat Q\} \leq 2\lfloor d/2\rfloor.
$$
Thus, $d$ is even and $\max\{\deg \hat P, \deg \hat Q\} = r = d/2$. Therefore the pair $\hat P, \hat Q$ satisfies Properties (1), (2), (3) in Definition \ref{defin:minimal_pair}. By Property (4), the polynomials $P$ and $Q$ satisfy
$$
\max\{H(P), H(Q)\} \leq \max\{H(\hat P), H(\hat Q)\},
$$
and so the result follows.

\item[\textbf{Case 2.}] Suppose that $\max\{\deg \hat P, \deg \hat Q\} \leq d - 1 - r$. Then we can use \mbox{Part 3} to conclude that $\hat P = GP$, $\hat Q = GQ$ for some $G \in \Z[x]$. Since either $\hat P$ or $\hat Q$ is nonzero, we have $H(G) \geq 1$. By Gelfond's Lemma \mbox{\cite[Lemma 1.6.11]{bombieri-gubler}},
$$
H(P) \leq H(G)H(P) \leq 2^{\deg(GP)}H(GP) \leq 2^{d/2}H(\hat P).
$$
An analogous estimate for $H(Q)$ yields the result.
\end{itemize}

\item Since $P$ and $Q$ are coprime and $r \geq 1$, they are linearly independent \mbox{over $\Q$}, so the Wronskian $W = PQ' - QP'$ is not identically equal to zero. Since $\alpha$ has degree $d$ and
\begin{align*}
\deg W
& = \deg(PQ' - QP')\\
& \leq \max\{\deg P, \deg Q\} + \max\{\deg P', \deg Q'\}\\
& \leq d/2 + (d/2 - 1)\\
& < d,
\end{align*}
we conclude that $W(\alpha) \neq 0$.

With the basic properties of heights listed in \cite[Section 2.4.1]{zannier}, we find the following upper bound on $H(W)$:
\begin{align*}
H(W)
& \leq H(PQ') + H(QP')\\
& \leq r(H(P)H(Q') + H(Q)H(P'))\\
& \leq 2r^2H(P)H(Q)\\
& \leq 2r^2\max\{H(P), H(Q)\}^2\\
& \leq 2(d/2)^2C_{12}^2.
\end{align*}

Suppose that $\alpha \in \CC$. Then $c_\alpha^{\deg W}W(\alpha)$ is a nonzero algebraic integer, so
$$
N_{\Q(\alpha)/\Q}\left(c_\alpha^{\deg W}W(\alpha)\right) = c_\alpha^{d\deg W}\prod_{i = 1}^dW(\alpha_i)
$$
is a nonzero rational integer. Thus,
\begin{align*}
|W(\alpha)|^{-1}
& \leq c_\alpha^{d\deg W}\prod\limits_{i = 2}^d|W(\alpha_i)|\\
& \leq c_\alpha^{d\deg W}\prod\limits_{i = 2}^d(\deg W + 1)H(W)\max\{1, |\alpha_i|\}^{\deg W}\\
& \leq (2rH(W))^{d - 1}\left(\frac{c_\alpha^{d - 1}M(\alpha)}{\max\{1, |\alpha|\}}\right)^{2r - 1}\\
& \leq \left((d^3/2)C_{12}^2\right)^{d - 1}\left(\frac{c_\alpha^{d - 1}M(\alpha)}{\max\{1, |\alpha|\}}\right)^{d - 1}.
\end{align*}
\end{enumerate}

Suppose that $\alpha \in \Q_p$. Let $f(x)$ denote the minimal polynomial of $\alpha$ with the leading coefficient $c_\alpha$. By \cite[Theorem 1.3.2]{prasolov}, there exist polynomials $\varphi, \psi \in \Z[x]$ such that $\deg \varphi < \deg W$, $\deg \psi < d$, and
$$
\varphi(x)f(x) + \psi(x)W(x) = \Res(f, W).
$$
Here $\Res(f, W)$ denotes the resultant of $f$ and $W$. Since $\Res(f, W) \neq 0$ and $\alpha$ is a root of $f(x)$, we see that $\psi(\alpha)W(\alpha) = \Res(f, W)$. Since $c_\alpha^{d - 1}\psi(\alpha)$ is an algebraic integer, its $p$-adic absolute value does not exceed one, so
$$
|W(\alpha)|_p \geq |c_\alpha^{d-1}\psi(\alpha)W(\alpha)|_p = |c_\alpha^{d-1}\Res(f, W)|_p.
$$
Further, it follows from Hadamard's inequality, as well as the upper bound on $H(W)$ established previously, that
\begin{align*}
|\Res(f, W)|
& \leq (\deg f + 1)^{\deg W/2}(\deg W + 1)^{\deg f/2}H(\alpha)^{\deg W}H(W)^{\deg f}\\
& \leq (d + 1)^{(2r - 1)/2}(2r)^{d/2}H(\alpha)^{2r - 1}(2(d/2)^2C_{12}^2)^{d}\\
& \leq (d + 1)^{(d - 1)/2}d^{d/2}H(\alpha)^{d - 1}((d^2/2)C_{12}^2)^{d}.
\end{align*}
Combining the lower bound on $|W(\alpha)|_p$ with an upper bound on $|\Res(f, W)|$ yields the result:
\begin{align*}
|W(\alpha)|_p
& \geq |c_\alpha^{d-1}\Res(f, W)|_p\\
& \geq |c_\alpha^{d-1}\Res(f, W)|^{-1}\\
& \geq H(\alpha)^{-(d - 1)}|\Res(f, W)|^{-1}\\
& \geq (d + 1)^{-(d - 1)/2}d^{-d/2}H(\alpha)^{-2d+2}((d^2/2)C_{12}^2)^{-d}.
\end{align*}
\end{proof}

\section{A Gap Principle in the Presence of Vanishing} \label{sec:gap-principle-with-vanishing}

Let $\alpha$ be an algebraic number over $\Q$ of degree $d \geq 2$ and let $\beta \in \Q(\alpha)$ be irrational. Let $P, Q \in \Z[x]$ be polynomials such that
$$
P(\alpha) + \beta Q(\alpha) = 0.
$$
In this section we prove Proposition \ref{prop:alternative_gap_principle}, which states that \emph{despite the vanishing} of $P(x) + yQ(x)$ at the point $\left(\frac{x_1}{y_1}, \frac{x_2}{y_2}\right) \in \Q^2$, it is still possible to produce a gap principle, provided that the quantity $r = \max\{\deg P, \deg Q\}$ exceeds one.

\begin{prop} \label{prop:alternative_gap_principle}
Let $P, Q \in \Z[x]$ be coprime and such that
$$
r = \max\{\deg P, \deg Q\} \geq 1.
$$
Let $x_1/y_1, x_2/y_2$ be rational numbers in lowest terms such that $H(x_2, y_2) \geq H(x_1, y_1)$ and
\begin{equation} \label{eq:R_vanishes_at_rational_point}
P\left(\frac{x_1}{y_1}\right) + \frac{x_2}{y_2}Q\left(\frac{x_1}{y_1}\right) = 0.
\end{equation}
Then
$$
H(x_2, y_2) \geq \frac{H(x_1, y_1)^r}{C_{15}\max\{H(P), H(Q)\}^{2r^2 + 3r}},
$$
where
$$
C_{15} = 2^{r^2} (r + 1)^{(3r^2 + 2r)/2}.
$$
\end{prop}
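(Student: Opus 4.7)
The plan is to convert the vanishing of $R(x,y) = P(x) + yQ(x)$ at $(x_1/y_1, x_2/y_2)$ into an integer relation between the coordinates, and then to extract a lower bound on $H(x_2, y_2)$ from the coprimality of $P$ and $Q$. Introduce the degree-$r$ binary forms $P^*(x, y) = y^r P(x/y)$ and $Q^*(x, y) = y^r Q(x/y)$; both have integer coefficients of absolute value at most $\max\{H(P), H(Q)\}$. Multiplying \eqref{eq:R_vanishes_at_rational_point} through by $y_1^r y_2$ gives
$$
y_2 P^*(x_1, y_1) + x_2 Q^*(x_1, y_1) = 0.
$$
Since $P, Q$ are coprime in $\Z[x]$ and at least one of them has degree exactly $r$, the forms $P^*, Q^*$ share no common zero in $\mathbb{P}^1(\CC)$, so $P^*(x_1, y_1), Q^*(x_1, y_1)$ are not both zero. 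Combined with $\gcd(x_2, y_2) = 1$, a short divisibility argument then shows that
$$
H(x_2, y_2) = \frac{\max\{|P^*(x_1, y_1)|, |Q^*(x_1, y_1)|\}}{\gcd(P^*(x_1, y_1), Q^*(x_1, y_1))}.
$$

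To bound both quantities, I would use the Bezout identities for coprime binary forms. By Cramer's rule applied to the $2r \times 2r$ Sylvester matrix of $P^*$ and $Q^*$, there exist $A_i, B_i \in \Z[x, y]$ of degree $r - 1$ for $i = 1, 2$ and a nonzero integer $R$ (essentially $\Res(P^*, Q^*)$) such that
$$
A_1 P^* + B_1 Q^* = R \cdot x^{2r-1}, \qquad A_2 P^* + B_2 Q^* = R \cdot y^{2r-1}.
$$
Evaluating these at $(x_1, y_1)$ and using $\gcd(x_1, y_1) = 1$ shows that $\gcd(P^*(x_1, y_1), Q^*(x_1, y_1))$ divides $R$. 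In the other direction, the triangle inequality applied to whichever of the two identities involves the variable $z \in \{x, y\}$ with $|z_1| = H(x_1, y_1)$, combined with the bound $|A_i(x_1, y_1)|, |B_i(x_1, y_1)| \leq r \cdot \max\{H(A_i), H(B_i)\} \cdot H(x_1, y_1)^{r-1}$, yields
$$
\max\{|P^*(x_1, y_1)|, |Q^*(x_1, y_1)|\} \geq \frac{|R| \cdot H(x_1, y_1)^r}{2r \cdot \max_i\{H(A_i), H(B_i)\}}.
$$
Substituting both estimates into the formula for $H(x_2, y_2)$, the factor $|R|$ cancels and leaves a bound of the shape $H(x_2, y_2) \geq H(x_1, y_1)^r / C(r, H(P), H(Q))$.

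The real obstacle is to make the constant $C(r, H(P), H(Q))$ explicit enough to match $C_{15} \cdot \max\{H(P), H(Q)\}^{2r^2 + 3r}$ with $C_{15} = 2^{r^2}(r+1)^{(3r^2+2r)/2}$. I would do this by applying Hadamard's inequality to all $k \times k$ subdeterminants of the Sylvester matrix (for $k \leq 2r$) to bound $|R|$, $H(A_i)$ and $H(B_i)$ explicitly in terms of $r$ and $\max\{H(P), H(Q)\}$; the large exponent $2r^2 + 3r$ reflects that the Cramer's rule entries, when expanded as polynomials in the coefficients of $P$ and $Q$, have combined degree growing quadratically in $r$, and the various Gelfond-type product estimates compound this quadratic growth. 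A brief separate case handles the degeneracy $P^*(x_1, y_1) = 0$: then $x_2 = 0$ and $H(x_2, y_2) = 1$, while $x_1/y_1$ is a rational root of $P$, so by the rational root theorem $H(x_1, y_1) \leq H(P)$ and the desired inequality holds trivially. The symmetric degeneracy $Q^*(x_1, y_1) = 0$ cannot occur, since by coprimality it would force $P^*(x_1, y_1) = 0$ simultaneously, contradicting $\gcd(x_1, y_1) = 1$. Thus the genuine difficulty is the combinatorial bookkeeping, not the underlying idea.
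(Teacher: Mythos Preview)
Your approach is correct and genuinely different from the paper's. Both arguments begin the same way---homogenize to $P^*,Q^*$, write $H(x_2,y_2)=\max\{|P^*(x_1,y_1)|,|Q^*(x_1,y_1)|\}/g$, and control $g$ through the resultant---but they diverge at the lower bound for $\max\{|P^*|,|Q^*|\}$. The paper factors $P^*$ and $Q^*$ over $\CC$ and invokes a root-separation estimate of Bugeaud--Mignotte (this is Lemma~\ref{lem:two_forms_bound} and Corollary~\ref{cor:two_forms_bound}); the quadratic exponent $2r^2+3r$ on $\max\{H(P),H(Q)\}$ comes from raising the separation bound to the $r$-th power. You instead read the lower bound directly off the B\'ezout identity $A_iP^*+B_iQ^*=R\,z^{2r-1}$, which is more elementary---no appeal to root separation is needed---and in fact sharper. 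Your one misdiagnosis is the sentence about the exponent $2r^2+3r$: the Cramer cofactors $A_i,B_i$ are $(2r-1)\times(2r-1)$ minors of a matrix whose columns each contain at most $r+1$ nonzero entries bounded by $H=\max\{H(P),H(Q)\}$, so Hadamard gives $H(A_i),H(B_i)\le (r+1)^{(2r-1)/2}H^{2r-1}$, and after the cancellation $|R|/g\ge 1$ you obtain
\[
H(x_2,y_2)\ \ge\ \frac{H(x_1,y_1)^r}{2r\,(r+1)^{(2r-1)/2}\,H^{2r-1}},
\]
with a \emph{linear} exponent in $r$. This is strictly stronger than the stated bound, so the proposition follows a fortiori. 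One small wording issue: in the degeneracy $Q^*(x_1,y_1)=0$, it is the relation $y_2P^*(x_1,y_1)=0$ together with $y_2\neq 0$ that forces $P^*(x_1,y_1)=0$, and \emph{then} coprimality of $P^*,Q^*$ yields the contradiction; your sentence has the implications slightly tangled, though the conclusion is right.
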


The proof of Proposition \ref{prop:alternative_gap_principle} is given at the end of the section, and it follows directly from the results established below.

\begin{lem} \label{lem:resultant}
Let $P, Q \in \Z[x]$ be coprime polynomials of degrees $r$ and $s$, respectively, such that $r \geq \max\{1, s\}$. Let $c_P$ be the leading coefficient of $P$,
$$
P(x, y) = y^rP(x/y) \quad \text{and} \quad Q(x, y) = y^rQ(x/y).
$$
Then for all coprime integers $a$ and $b$ the number $g = \gcd\left(P(a, b), Q(a, b)\right)$ divides
$$
\varrho = \left|c_P^{r-s}\Res(P, Q)\right|,
$$
where $\Res(P, Q)$ denotes the resultant of $P$ and $Q$. Furthermore,
$$
1 \leq \varrho \leq (r + 1)^r\max\{H(P), H(Q)\}^{2r}.
$$
\end{lem}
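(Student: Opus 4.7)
The plan is to derive the divisibility $g\mid\varrho$ from a pair of Bezout-type identities produced by Cramer's rule on the Sylvester matrix, followed by a prime-by-prime valuation argument, and then to bound $\varrho$ above via Hadamard's inequality.

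Since $P$ and $Q$ are coprime in $\Q[x]$, the resultant $R=\Res(P,Q)$ is a nonzero integer and the Sylvester matrix of $(P,Q)$ is invertible over $\Q$. Applying Cramer's rule with the standard basis vectors as right-hand sides yields, for each $k\in\{0,1,\ldots,r+s-1\}$, integer polynomials $A_k,B_k$ with $\deg A_k\le s-1$ and $\deg B_k\le r-1$ satisfying $A_k(x)P(x)+B_k(x)Q(x)=Rx^k$. I will use only the cases $k=0$ and $k=r+s-1$. Substituting $x=a/b$ and multiplying through by $b^{2r-1}$ turns the auxiliary polynomials into integer binary forms in $(a,b)$; the mismatch between $\deg Q=s$ and the exponent $r$ used in the definition $Q(x,y)=y^rQ(x/y)$ forces a factor $b^{r-s}$ to attach itself to the $P$-side. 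The resulting identities have right-hand sides $b^{2r-1}R$ and $a^{r+s-1}b^{r-s}R$, both of which are divisible by $g$ since $g\mid P(a,b)$ and $g\mid Q(a,b)$. Combining via $\gcd(a,b)=1$ yields $g\mid Rb^{r-s}$.

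To upgrade $b^{r-s}$ to $c_P^{r-s}$, I will argue locally at each prime $p$. When $p\nmid b$, the bound $g\mid Rb^{r-s}$ already gives $v_p(g)\le v_p(R)$. When $p\mid b$ (so $p\nmid a$), the expansion $P(a,b)=c_Pa^r+b\cdot(\cdots)$ is decisive: if $v_p(b)>v_p(c_P)$, the leading term is $p$-adically strictly dominant, forcing $v_p(P(a,b))=v_p(c_P)$ and hence $v_p(g)\le v_p(c_P)\le(r-s)v_p(c_P)$ whenever $r>s$; if $v_p(b)\le v_p(c_P)$, the direct estimate $v_p(g)\le v_p(R)+(r-s)v_p(b)\le v_p(R)+(r-s)v_p(c_P)$ suffices. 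The borderline case $r=s$ is absorbed automatically, because then $b^{r-s}=1$ so $g\mid R$ already. I expect the split according to the relative sizes of $v_p(b)$ and $v_p(c_P)$ to be the most delicate step of the proof.

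For the numerical bounds on $\varrho$, the lower bound $\varrho\ge 1$ is immediate because $c_P$ and $R$ are nonzero integers. For the upper bound I will apply Hadamard's inequality to the $(r+s)\times(r+s)$ Sylvester matrix: its $s$ rows originating from $P$ have $\ell^2$-norm at most $\sqrt{r+1}\,H$ and its $r$ rows originating from $Q$ have $\ell^2$-norm at most $\sqrt{s+1}\,H$, where $H=\max\{H(P),H(Q)\}$. This gives $|R|\le(r+1)^{s/2}(s+1)^{r/2}H^{r+s}\le(r+1)^rH^{r+s}$ after using $s\le r$, and multiplying by $|c_P|^{r-s}\le H^{r-s}$ delivers the claimed estimate $\varrho\le(r+1)^rH^{2r}$.
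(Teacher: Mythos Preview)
Your argument is correct, and it reaches the same conclusion as the paper by a genuinely different route. The paper also splits according to whether $p\mid b$, but in the case $p\mid b$ it does not track $v_p(b)$ against $v_p(c_P)$ at all; instead it passes to the reversed forms $P(1,x)=\mathcal R(P)$ and $Q(1,x)=x^{r-s}\mathcal R(Q)$, applies the single Bezout identity $\varphi P+\psi Q=\Res(P,Q)$ to those, and then computes
\[
\Res\!\left(\mathcal R(P),\,x^{r-s}\mathcal R(Q)\right)=\pm\,c_P^{\,r-s}\Res(P,Q),
\]
so that the factor $c_P^{r-s}$ emerges structurally from the resultant calculus rather than from a valuation comparison. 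Your approach trades this piece of resultant algebra for an elementary ultrametric estimate on $P(a,b)=c_Pa^r+b(\cdots)$; it is a bit more hands-on but has the advantage of not invoking multiplicativity of resultants or the reciprocal-resultant identity. Your use of the two Cramer identities at $k=0$ and $k=r+s-1$ to squeeze out $g\mid Rb^{r-s}$ is a nice intermediate step the paper does not isolate. The Hadamard bound and the lower bound $\varrho\ge 1$ are handled identically in both proofs.
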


\begin{proof}
Let $a$ and $b$ be coprime integers and suppose that a prime power $p^n$ exactly divides $g = \gcd\left(P(a, b), Q(a, b)\right)$. Since $a$ and $b$ are coprime, either $a$ or $b$ is not divisible by $p$. Suppose that $p$ does not divide $b$. By \cite[Theorem 1.3.2]{prasolov}, there exist polynomials $\varphi, \psi \in \Z[x]$ such that
$$
\varphi(x)P(x) + \psi(x)Q(x) = \Res(P, Q).
$$
Let $t = \max\{\deg \varphi, \deg \psi\}$. We evaluate the polynomial on the left-hand side at $x = a/b$ and multiply both sides of the above equality by $b^{r + t}$:
$$
b^t\varphi(a/b)P(a, b) + b^t\psi(a/b)Q(a, b) = \Res(P, Q)b^{r+t}
$$
By definition of $t$, the numbers $b^t\varphi(a/b)$ and $b^t\psi(a/b)$ are integers. Since $p$ does not divide $b$ and $p^n$ divides both $P(a, b)$ and $Q(a, b)$, we conclude that $p^n$ divides $\Res(P, Q)$.

Suppose that $p$ divides $b$. Then $p$ does not divide $a$, and so by analogy with the previous case we see that $p^n$ divides $\Res(P(1, x), Q(1, x))$. Let $\mathcal R(f) = x^{\deg f}f(1/x)$ denote the reciprocal of a polynomial $f(x)$. Then
$$
P(1, x) = \mathcal R(P) \quad \text{and} \quad Q(1, x) = x^{r - s}\mathcal R(Q),
$$
so
\begin{align*}
\Res(P(1, x), Q(1, x))
& = \Res\left(\mathcal R(P), x^{r - s}\mathcal R(Q)\right)\\
& = \Res(\mathcal R(P), x)^{r - s}\Res(\mathcal R(P), \mathcal R(Q))\\
& = \left((-1)^rc_P\right)^{r - s}(-1)^{rs}\Res(P, Q)\\
& = (-1)^rc_P^{r - s}\Res(P, Q).
\end{align*}
Therefore, $p^n$ divides $\left|c_P^{r - s}\Res(P, Q)\right|$, and the result follows.

Finally, since $P(x)$ and $Q(x)$ are coprime and $r \geq 1$, we have $\Res(P, Q) \neq 0$, so $\varrho \geq 1$. Applying Hadamard's inequality and $r \geq s$, we obtain
\begin{align*}
|c_P^{r - s}\Res(P, Q)|
& \leq |c_P|^{r - s}(r + 1)^{s/2}(s + 1)^{r/2}H(P)^sH(Q)^r\\
& \leq (r + 1)^r\max\{H(P), H(Q)\}^{2r}.
\end{align*}
\end{proof}

\begin{lem} \label{lem:two_forms_bound}
Let
$$
P(x, y) = \prod\limits_{i = 1}^r(\alpha_ix + \beta_iy) \quad \text{and} \quad Q(x, y) = \prod\limits_{j = 1}^r(\gamma_jx + \delta_jy)
$$
be binary forms of degree $r \geq 1$, with complex coefficients. Let
\begin{equation} \label{eq:two_forms_bound_C}
C = C(P, Q) = \frac{\min_{i, j}\{|\alpha_i\delta_j -\beta_i\gamma_j|\}}{\max_{i, j}\left\{\max\{|\alpha_i| + |\gamma_j|, |\beta_i| + |\delta_j|\}\right\}}.
\end{equation}
Suppose that $P$ and $Q$ do not have a linear factor in common, so that $C > 0$. Then for all pairs $(a, b) \in \CC^2$ we have
$$
\max\{|P(a, b)|, |Q(a, b)|\} \geq C^rH(a, b)^r.
$$
\end{lem}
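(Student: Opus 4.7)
The plan is to reduce the statement to a pairwise comparison between individual linear factors of $P$ and $Q$. For each $i$ and $j$, write $L_i(a,b) = \alpha_i a + \beta_i b$ and $M_j(a,b) = \gamma_j a + \delta_j b$. By solving the $2 \times 2$ linear system in the unknowns $a$, $b$, one obtains the identities
$$
(\alpha_i\delta_j - \beta_i\gamma_j)\, a = \delta_j L_i(a,b) - \beta_i M_j(a,b), \qquad
(\alpha_i\delta_j - \beta_i\gamma_j)\, b = \alpha_i M_j(a,b) - \gamma_j L_i(a,b).
$$
Applying the triangle inequality to each identity and taking the maximum over the two coordinates, I would obtain
$$
|\alpha_i\delta_j - \beta_i\gamma_j| \cdot H(a,b) \leq \max\{|L_i(a,b)|,|M_j(a,b)|\} \cdot \max\{|\alpha_i|+|\gamma_j|,\, |\beta_i|+|\delta_j|\},
$$
which, after dividing and using the definition (\ref{eq:two_forms_bound_C}) of $C$, gives the key pairwise bound
$$
\max\{|L_i(a,b)|,|M_j(a,b)|\} \geq C\, H(a,b) \qquad \text{for all } i, j.
$$

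With this pairwise estimate in hand, the rest is a clean dichotomy. Either $|L_i(a,b)| \geq C\, H(a,b)$ holds for every $i \in \{1,\ldots,r\}$, in which case multiplying the $r$ inequalities yields $|P(a,b)| \geq C^r H(a,b)^r$; or there exists some index $i_0$ with $|L_{i_0}(a,b)| < C\, H(a,b)$. In the latter case the pairwise estimate applied with $i = i_0$ and each $j = 1,\ldots,r$ forces $|M_j(a,b)| \geq C\, H(a,b)$ for every $j$, and multiplying gives $|Q(a,b)| \geq C^r H(a,b)^r$. In either case, $\max\{|P(a,b)|,|Q(a,b)|\} \geq C^r H(a,b)^r$, which is what we want.

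The only mildly delicate point is confirming that $C > 0$ under the hypothesis that $P$ and $Q$ share no common linear factor: if some $\alpha_i\delta_j - \beta_i\gamma_j$ vanished, then $\alpha_i x + \beta_i y$ and $\gamma_j x + \delta_j y$ would be proportional, contradicting the no-common-factor assumption; hence the minimum in the numerator of (\ref{eq:two_forms_bound_C}) is strictly positive, and the denominator is finite (and indeed positive, unless all coefficients vanish, which is excluded by $r \geq 1$). No other obstacle arises, and the argument requires no auxiliary lemma beyond the triangle inequality.
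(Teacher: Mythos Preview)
Your proof is correct and follows essentially the same approach as the paper: both arguments rest on the Cramer-type identities expressing $(\alpha_i\delta_j-\beta_i\gamma_j)a$ and $(\alpha_i\delta_j-\beta_i\gamma_j)b$ in terms of $L_i$ and $M_j$, followed by the triangle inequality and a dichotomy on which family of linear factors is uniformly large. Your presentation is in fact slightly more streamlined, since you bound $H(a,b)$ directly in a single pairwise estimate, whereas the paper treats $|a|$ and $|b|$ separately and then combines the resulting cases.
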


\begin{proof}
We claim that either
$$
\min\limits_{i = 1, \ldots, r}\{|\alpha_i a + \beta_i b|\} \geq C|b| \quad \text{or} \quad \min\limits_{j = 1, \ldots, r}\{|\gamma_ja + \delta_jb|\} \geq C|b|.
$$
For suppose not. Then for all $i, j$ we have
\begin{align*}
\left|(\alpha_i\delta_j - \beta_i\gamma_j)b\right|
& = |\alpha_i(\gamma_ja + \delta_jb) - \gamma_j(\alpha_ia + \beta_ib)|\\
& \leq (|\alpha_i| + |\gamma_j|)\max\left\{|\alpha_i a + \beta_i b|, |\gamma_j a + \delta_j b|\right\}\\
& < (|\alpha_i| + |\gamma_j|)C|b|\\
& \leq \min\{|\alpha_i\delta_j - \beta_i\gamma_j|\}|b|,
\end{align*}
so we reach a contradiction. Without loss of generality, suppose that $\min\{|\alpha_ia + \beta_i b|\}\geq C|b|$. Then
$$
|P(a, b)| = \prod\limits_{i = 1}^r|\alpha_ia + \beta_i b| \geq \min\left\{|\alpha_ia + \beta_ib|\right\}^r \geq C^r|b|^r.
$$
Analogously, either
$$
\min\limits_{i = 1, \ldots, r}\{|\alpha_i a + \beta_i b|\} \geq C|a| \quad \text{or} \quad \min\limits_{j = 1, \ldots, r}\{|\gamma_ja + \delta_jb|\} \geq C|a|.
$$
In the first case we can immediately conclude that $|P(a, b)| \geq C^rH(a, b)^r$ and the result follows. Otherwise we have $|Q(a, b)| \geq C^r|a|^r$. Combining this inequality with $|P(a, b)| \geq C^r|b|^r$ yields the result.
\end{proof}

For the proof of the following result, recall the definition of the \emph{Mahler measure} and the \emph{house} of a polynomial introduced in Section \ref{sec:theory}.

\begin{cor} \label{cor:two_forms_bound}
Let $P, Q \in \Z[x]$ be coprime polynomials of degrees $r$ and $s$, respectively, such that $r \geq \max\{1, s\}$. Define
$$
P(x, y) = y^rP(x/y) \quad \text{and} \quad Q(x, y) = y^rQ(x/y).
$$
Then for all pairs $(a, b) \in \CC^2$ we have
\small
$$
\max\{|P(a, b)|, |Q(a, b)|\} \geq \frac{H(a, b)^r}{2^{r^2} (r + 1)^{3r^2/2} \max\{H(P), H(Q)\}^{2r^2+r}}.
$$
\normalsize
\end{cor}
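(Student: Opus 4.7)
The plan is to apply Lemma~\ref{lem:two_forms_bound} to $P(x, y)$ and $Q(x, y)$, regarded as binary forms of common degree $r$, and to derive the stated inequality by bounding the constant $C = C(P, Q)$ from below.

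First I would factor $P(x,y) = \prod_{i=1}^{r}(\alpha_i x + \beta_i y)$ and $Q(x,y) = \prod_{j=1}^{r}(\gamma_j x + \delta_j y)$ over $\CC$. Because $\deg Q = s \leq r$, the factorization of $Q(x, y)$ includes $r - s$ copies of the factor $y$ (that is, $\gamma_j = 0$ and $\delta_j = 1$ for $s < j \leq r$); the remaining $s$ factors come from the roots of the univariate polynomial $Q$. To verify the hypothesis of Lemma~\ref{lem:two_forms_bound} I need $P$ and $Q$ to share no linear factor over $\CC$: coprimality of $P(x)$ and $Q(x)$ in $\Z[x]$ rules out non-$y$ common factors, and $y$ itself is not a factor of $P(x, y)$ since $P(x, 0) = c_P x^r$ is nonzero. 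Consequently $C > 0$ and the lemma yields $\max\{|P(a, b)|, |Q(a, b)|\} \geq C^r H(a, b)^r$.

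The heart of the argument is then a lower bound on $C$. The denominator $\max_{i, j}\max\{|\alpha_i| + |\gamma_j|,\ |\beta_i| + |\delta_j|\}$ is controlled via the Mahler measure: a direct computation on the chosen factorization gives $\max\{|\alpha_i|, |\beta_i|\} \leq M(P)$, and analogously for $Q$, so Landau's inequality $M(P) \leq \sqrt{r+1}\,H(P)$ bounds the denominator by a modest multiple of $\sqrt{r+1}\,\max\{H(P), H(Q)\}$. For the numerator $\min_{i, j}|\alpha_i \delta_j - \beta_i \gamma_j|$, the crucial observation is the product identity
$$\prod_{i = 1}^{r}\prod_{j = 1}^{r}(\alpha_i \delta_j - \beta_i \gamma_j) = \pm\, c_P^{r - s}\Res(P, Q),$$
whose absolute value is precisely the quantity $\varrho$ from Lemma~\ref{lem:resultant} and is therefore a positive integer, bounded below by $1$. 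Paired with the elementary upper bound $|\alpha_i \delta_j - \beta_i \gamma_j| \leq 2 M(P) M(Q)$ on each of the $r^2$ factors of the product, this converts a lower bound on the product into a lower bound on the minimum factor.

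Assembling the two estimates into a lower bound on $C$, raising to the $r$-th power, and substituting into Lemma~\ref{lem:two_forms_bound} will yield the desired inequality. The main technical obstacle I anticipate is the bookkeeping required to make the constants $2^{r^2}$, $(r+1)^{3r^2/2}$, and the exponent $2r^2 + r$ on $\max\{H(P), H(Q)\}$ come out as stated; the exact placement of the leading coefficients $c_P$ and $c_Q$ within the linear factorizations of $P(x, y)$ and $Q(x, y)$ is what ultimately determines how the Mahler-measure bounds distribute across the estimate for $C$, and the $r$-th power $C^r$ is what promotes the per-factor exponents into the $r^2$ terms appearing in the final statement.
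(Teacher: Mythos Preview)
Your overall strategy---apply Lemma~\ref{lem:two_forms_bound} and bound $C$ from below---matches the paper's, and the product identity $\prod_{i,j}|\alpha_i\delta_j-\beta_i\gamma_j|=\varrho=|c_P^{\,r-s}\Res(P,Q)|\ge 1$ is correct and elegant. However, the obstacle you describe as ``bookkeeping'' is structural, not cosmetic. Converting a lower bound on a product of $r^2$ factors into a lower bound on the \emph{minimum} factor costs $r^2-1$ copies of the upper bound: from $1\le m\cdot U^{\,r^2-1}$ with $U\le 2M(P)M(Q)\le 2(r+1)\max\{H(P),H(Q)\}^{2}$ you only get $m\ge U^{-(r^2-1)}$. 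Feeding this into $C$ and then taking the $r$-th power, the exponent on $\max\{H(P),H(Q)\}$ comes out as $2r^3-r$, not the $2r^2+r$ asserted in the corollary. No rearrangement of leading coefficients in the linear factorizations will recover the missing factor of $r$.

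The paper closes this gap by bounding the numerator of $C$ directly rather than through the resultant product: it writes $\min_{i,j}|\alpha_i\delta_j-\beta_i\gamma_j|$ in terms of $\min_{i,j}|\mu_i-\nu_j|$ (the minimum distance between a root of $P$ and a root of $Q$) and then invokes a root-separation theorem of Bugeaud and Mignotte, which gives $\min_{i,j}|\mu_i-\nu_j|\gg \max\{H(P),H(Q)\}^{-2r}$. That exponent, linear in $r$, is exactly what makes the final exponent $r(2r+1)=2r^2+r$. Your resultant argument does yield a valid inequality of the same shape, and it would still suffice for the qualitative conclusions downstream (Proposition~\ref{prop:alternative_gap_principle} and the gap principles absorb all such constants into the threshold $C_1$), but it cannot recover the explicit constants stated in Corollary~\ref{cor:two_forms_bound}; for those you need the external root-separation input.
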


\begin{proof}
Let $c_P$ and $c_Q$ be the leading coefficients of $P$ and $Q$, respectively. Then
$$
|c_P| \cdot \max\{1, \house{P}\} \leq M(P) \quad \text{and} \quad |c_Q| \cdot \max\{1, \house{Q}\} \leq M(Q),
$$
and so it follows from \cite[Lemma 1.6.7]{bombieri-gubler} that
\small
\begin{equation} \label{eq:house-bounds}
|c_P| \cdot \max\{1, \house{P}\} \leq (r + 1)^{1/2}H(P) \quad \text{and} \quad |c_Q| \cdot \max\{1, \house{Q}\} \leq (s + 1)^{1/2}H(Q).
\end{equation}
\normalsize
Let $\mu_1, \ldots, \mu_r$ be the roots of $P(x)$ and write
$$
P(x, y) = c_P\prod\limits_{i = 1}^r(x - \mu_i y) = \prod\limits_{i = 1}^r(\alpha_ix + \beta_iy),
$$
where $\alpha_i = c_P^{1/r}$, $\beta_i = - c_P^{1/r}\mu_i$. We consider the following two cases.

\textbf{Case 1.} Suppose that $s = 0$, i.e., $Q(x) = c_Q$. Then
$$
Q(x, y) = c_Qy^r = \prod\limits_{j = 1}^r(\gamma_jx + \delta_jy),
$$
where $\gamma_j = 0$ and $\delta_j = c_Q^{1/r}$. Using (\ref{eq:house-bounds}), the constant $C$ in (\ref{eq:two_forms_bound_C}) can be estimated from below as follows:
\begin{align*}
C
& = \frac{|c_Pc_Q|^{1/r}}{\max_i\left\{\max\{|c_P|^{1/r}, |c_P|^{1/r}|\mu_i| + |c_Q|^{1/r}\}\right\}}\\
& = \frac{|c_Q|^{1/r}}{\max_i\{\ \max\{1, |\mu_i| + \left|\frac{c_Q}{c_P}\right|^{1/r}\}\ \}}\\
& = \frac{1}{\max_i\{\ \max\{\left|\frac{1}{c_Q}\right|^{1/r}, \left|\frac{\mu_i}{c_Q}\right|^{1/r} + \left|\frac{1}{c_P}\right|^{1/r}\}\ \}}\\
& = \frac{1}{\max\{\ \left|\frac{1}{c_Q}\right|^{1/r}, \frac{\house{P}}{|c_Q|^{1/r}} + \left|\frac{1}{c_P}\right|^{1/r} \ \}}\\
& \geq \frac{1}{|c_P|\cdot \max\{1, \house{P}\} + |c_Q| \cdot \max\{1, \house{Q}\}}\\
& \geq \frac{1}{2(r + 1)^{1/2}\max\{H(P), H(Q)\}}.
\end{align*}

\textbf{Case 2.} Suppose that $s \geq 1$. Let $\nu_1, \ldots, \nu_s$ be the roots of $Q(x)$ and write
$$
Q(x, y) = c_Qy^{r - s}\prod\limits_{j = 1}^s(x - \nu_jy) = \prod\limits_{j = 1}^r(\gamma_jx + \delta_jy),
$$
where
$$
\gamma_j =
\begin{cases}
c_Q^{1/r}, & \text{if $1 \leq i \leq s$,}\\
0, & \text{if $s + 1 \leq i \leq r$,}
\end{cases}
\quad
\text{and}
\quad
\delta_j =
\begin{cases}
-c_Q^{1/r}\nu_i, & \text{if $1 \leq i \leq s$,}\\
c_Q^{1/r}, & \text{if $s + 1 \leq i \leq r$.}
\end{cases}
$$
Using (\ref{eq:house-bounds}), the constant $C$ in (\ref{eq:two_forms_bound_C}) can be estimated from below as follows:
\begin{align*}
C
& = \frac{\min_{i, j}\{|\alpha_i\delta_j -\beta_i\gamma_j|\}}{\max_{i, j}\left\{\max\{|\alpha_i| + |\gamma_j|, |\beta_i| + |\delta_j|\}\right\}}\\
& \geq \frac{|c_Pc_Q|^{1/r}\min\{1, \min_{i, j}\{|\mu_i - \nu_j|\}\}}{|c_P|^{1/r}\cdot \max\{1, \house{P}\} + |c_Q|^{1/r}\cdot \max\{1, \house{Q}\}}\\
& \geq \frac{\min\{1, \min_{i, j}\{|\mu_i - \nu_j|\}\}}{2(r + 1)^{1/2}\max\{H(P), H(Q)\}}.
\end{align*}
By \cite[Theorem A]{bugeaud-mignotte}, 
$$
\min\limits_{\substack{1 \leq i \leq r\\ 1 \leq j \leq s}}\{|\mu_i - \nu_j|\} \geq 2^{1 - r}(r + 1)^{(1 - 3r)/2}\max\{H(P), H(Q)\}^{-2r}.
$$
Since $P, Q \in \Z[x]$ and $r \geq 1$, we have $\max\{H(P), H(Q)\} \geq 1$, so the quantity on the right-hand side of the above inequality does not exceed one. Combining the lower bound on $\min_{i, j}\{|\mu_i - \nu_j|\}$ with the lower bound on $C$ established above, we obtain
$$
C \geq 2^{-r} (r + 1)^{-3r/2} \max\{H(P), H(Q)\}^{-2r - 1}.
$$
The result now follows from Lemma \ref{lem:two_forms_bound}.
\end{proof}

\begin{proof}[Proof of Proposition \ref{prop:alternative_gap_principle}]
From equation (\ref{eq:R_vanishes_at_rational_point}) it follows that $Q(x_1/y_1) \neq 0$, for otherwise $P(x_1/y_1) = 0$, which means that $P$ and $Q$ are not coprime. Let
$$
P(x, y) = y^rP(x/y) \quad \text{and} \quad Q(x, y) = y^rQ(x/y).
$$
Since $|y_1| \geq 1$, it must be the case that $Q(x_1, y_1) = y_1^rQ(x_1/y_1) \neq 0$, so
$$
\frac{x_2}{y_2} = -\frac{P(x_1/y_1)}{Q(x_1/y_1)} = -\frac{P(x_1, y_1)}{Q(x_1, y_1)}.
$$
Since $x_2$ and $y_2$ are coprime, and $P(x_1, y_1)$ and $Q(x_1, y_1)$ are integers, we see that
$$
|x_2| = \frac{|P(x_1, y_1)|}{g} \quad \text{and} \quad |y_2| = \frac{|Q(x_1, y_1)|}{g},
$$
where $g = \gcd\left(P(x_1, y_1), Q(x_1, y_1)\right)$. By Lemma \ref{lem:resultant}, $g \leq (r + 1)^r\max\{H(P), H(Q)\}^{2r}$. Thus,
$$
H(x_2, y_2) = \frac{\max\{|P(x_1, y_1)|, |Q(x_1, y_1)|\}}{g} \geq \frac{\max\{|P(x_1, y_1)|, |Q(x_1, y_1)|\}}{(r + 1)^r\max\{H(P), H(Q)\}^{2r}}.
$$
Finally, since $P$ and $Q$ are coprime, Corollary \ref{cor:two_forms_bound} applies:
\begin{align*}
H(x_2, y_2)
& \geq \frac{\max\{|P(x_1, y_1)|, |Q(x_1, y_1)|\}}{(r + 1)^r\max\{H(P), H(Q)\}^{2r}}\\
& \geq \frac{H(x_1, y_1)^r}{2^{r^2} (r + 1)^{(3r^2 + 2r)/2} \max\{H(P), H(Q)\}^{2r^2 + 3r}}.
\end{align*}
\end{proof}

\section{A Generalized Archimedean Gap Principle} \label{sec:generalized_archimedean_gap_principle}

In this section we prove Theorem \ref{thm:archimedean_gap_principle}. Recall that, for any $h \in \Z[x]$ and $i$ such that $0 \leq i \leq \deg h$, the inequality
\begin{equation} \label{eq:Dih_bound}
|D_ih(\alpha)| \leq H(h)\binom{\deg h + 1}{i + 1}\max\{1, |\alpha|\}^{\deg h - i}
\end{equation}
holds.

Let $P, Q$ be a minimal pair for $(\alpha, \beta)$, and define $R(x, y) = P(x) + yQ(x)$, so that $R(\alpha, \beta) = 0$. Choose $C_1$ so that
$$
C_1 \geq C_0^{1/\mu}.
$$
Then
$$
\left|\alpha - \frac{x_1}{y_1}\right| < 1 \quad \text{and} \quad \left|\beta - \frac{x_2}{y_2}\right| < 1.
$$
If $R(x_1/y_1, x_2/y_2) \neq 0$, then it follows from the triangle inequality, (\ref{eq:Dih_bound}), and the two inequalities established above that
\begin{align*}
\frac{1}{H(x_1, y_1)^r H(x_2, y_2)}
& \leq \left|R\left(\frac{x_1}{y_1}, \frac{x_2}{y_2}\right)\right|\\
& \leq \sum\limits_{i = 0}^r\sum\limits_{j = 0}^1\left|D_{i,j}R(\alpha, \beta)\right|\left|\alpha - \frac{x_1}{y_1}\right|^i\left|\beta - \frac{x_2}{y_2}\right|^j\\
& < \frac{C_0}{H(x_1, y_1)^{\mu}}\sum\limits_{i = 0}^r\left(\left|D_{i, 0}R(\alpha, \beta)\right| + \left|D_{i, 1}R(\alpha, \beta)\right|\right)\\
& \leq \frac{C_0}{H(x_1, y_1)^{\mu}}\sum\limits_{i = 0}^r\left(|D_iP(\alpha)| + (1 + |\beta|)\cdot |D_iQ(\alpha)|\right)\\
& \leq \frac{C_0}{H(x_1, y_1)^{\mu}}\sum\limits_{i = 0}^r\left(\binom{r + 1}{i + 1}H(P) + (1 + |\beta|)\binom{r + 1}{i + 1}H(Q)\right)\max\{1, |\alpha|\}^{r - i}\\
& \leq \frac{C_0}{H(x_1, y_1)^{\mu}}(2 + |\beta|)\max\{H(P), H(Q)\}\max\{1, |\alpha|\}^r\sum\limits_{i = 0}^r\binom{r + 1}{i + 1}\\
& < \frac{C_0}{H(x_1, y_1)^{\mu}}2^{r + 1}(2 + |\beta|)C_{12}\max\{1, |\alpha|\}^r\\
& \leq \frac{C_0}{H(x_1, y_1)^{\mu}}2^{(d/2) + 1}(2 + |\beta|)C_{12}\max\{1, |\alpha|\}^{d/2}\\
& = \frac{C_2}{H(x_1, y_1)^{\mu}},
\end{align*}
where the second-to-last inequality follows from (\ref{eq:height_PQ}). By (\ref{eq:r_bound}),
$$
H(x_2,y_2) > C_2^{-1}H(x_1, y_1)^{\mu - r} \geq C_2^{-1}H(x_1, y_1)^{\mu - d/2},
$$
which means that case 1 holds.

Suppose that $R(x_1/y_1, x_2/y_2) = 0$. If $r = 1$, then by definition $R(x, y) = (sx + t) - y(ux + v)$ for some integers $s$, $t$, $u$ and $v$. Note that \mbox{$sv - tu \neq 0$}, for otherwise the number $\beta$ would have to be rational. Since \mbox{$R(\alpha, \beta) = 0$} and \mbox{$R(x_1/y_1,\ x_2/y_2) = 0$}, case 2 holds.

It remains to consider the case when $R(x_1/y_1, x_2/y_2) = 0$ and $r \geq 2$. We will prove that $H(x_1, y_1) < C$ for some positive real number $C$, which depends only on $\alpha$, $\beta$, $\mu$ and $C_0$. By choosing $C_1$ so that $C_1 \geq C$, we then arrive to a contradiction. Note that
\begin{equation} \label{eq:beta_minus_x2y2}
\left|\beta - \frac{x_2}{y_2}\right| = \left|\frac{P(\alpha)}{Q(\alpha)} - \frac{P(x_1/y_1)}{Q(x_1/y_1)}\right| = \frac{|P(\alpha)Q(x_1/y_1) - Q(\alpha)P(x_1/y_1)|}{|Q(\alpha)Q(x_1/y_1)|}.
\end{equation}
Further,
\begin{equation} \label{eq:Q_alpha_upper_bound}
|Q(\alpha)| \leq (r + 1)\max\{H(P), H(Q)\}\max\{1, |\alpha|\}^r \leq (r + 1)C_{12}\max\{1, |\alpha|\}^r,
\end{equation}
\begin{align} \label{eq:Q_x1y1_upper_bound}
\left|Q\left(\frac{x_1}{y_1}\right)\right|
& \leq \sum\limits_{i = 0}^r|D_iQ(\alpha)|\cdot\left|\alpha - \frac{x_1}{y_1}\right|^i\\\notag
& < \sum\limits_{i = 0}^r|D_iQ(\alpha)|\\\notag
& \leq H(Q)\sum\limits_{i = 0}^r\binom{r + 1}{i + 1}\max\{1, |\alpha|\}^{r - i}\\\notag
& < 2^{r + 1}C_{12}\max\{1, |\alpha|\}^r.\\\notag
\end{align}

It remains to estimate $|P(\alpha)Q(x_1/y_1) - Q(\alpha)P(x_1/y_1)|$ from below. Let $W = PQ' - QP'$ denote the Wronskian of $P$ and $Q$. By Taylor's Theorem, (\ref{eq:Dih_bound}) and (\ref{eq:W_alpha_archimedean_lower_bound}),
$$
\begin{array}{l}
\left|P(\alpha)Q\left(\frac{x_1}{y_1}\right) -Q(\alpha)P\left(\frac{x_1}{y_1}\right)\right|\\
= \left|P(\alpha)\sum\limits_{i = 0}^rD_iQ(\alpha)\left(\frac{x_1}{y_1} - \alpha\right)^i - Q(\alpha)\sum\limits_{i = 0}^rD_iP(\alpha)\left(\frac{x_1}{y_1} - \alpha\right)^i\right|\\
= \left|\alpha - \frac{x_1}{y_1}\right|\cdot \left|\sum\limits_{i = 0}^{r-1}\left(P(\alpha)D_{i+1}Q(\alpha) - Q(\alpha)D_{i+1}P(\alpha)\right)\left(\frac{x_1}{y_1} - \alpha\right)^i\right|\\
= \left|\alpha - \frac{x_1}{y_1}\right|\cdot \left|W(\alpha) + \left(\frac{x_1}{y_1} - \alpha\right)\sum\limits_{i = 1}^{r-1}\left(P(\alpha)D_{i+1}Q(\alpha) - Q(\alpha)D_{i+1}P(\alpha)\right)\left(\frac{x_1}{y_1} - \alpha\right)^{i-1}\right|\\
 > \left|\alpha - \frac{x_1}{y_1}\right|\left(|W(\alpha)| - \frac{C_0}{H(x_1, y_1)^\mu}\sum\limits_{i = 1}^{r-1}\left|P(\alpha)D_{i+1}Q(\alpha) - Q(\alpha)D_{i+1}P(\alpha)\right|\right)\\
\geq \left|\alpha - \frac{x_1}{y_1}\right|\left(|W(\alpha)| - \frac{C_0}{H(x_1, y_1)^\mu}2(r + 1)\max\{H(P), H(Q)\}^2\sum\limits_{i = 1}^{r - 1}\binom{r + 1}{i + 2}\max\{1, |\alpha|\}^{2r - i - 1}\right)\\
\geq \left|\alpha - \frac{x_1}{y_1}\right|\left(C_{13} - C_1^{-\mu}C_02^{r + 2}(r + 1)C_{12}^2\max\{1, |\alpha|\}^{2r}\right),
\end{array}
$$
where the last inequality follows from $H(x_1, y_1) \geq C_1$, (\ref{eq:height_PQ}) and (\ref{eq:W_alpha_archimedean_lower_bound}). Thus, if we choose $C_1$ so that
$$
C_1^\mu \geq 2^{(d/2) + 3}((d/2) + 1)C_0C_{12}^2C_{13}^{-1}\max\{1, |\alpha|\}^d,
$$
then it follows from $r \leq d/2$ that
\begin{align*}
\left|P(\alpha)Q\left(\frac{x_1}{y_1}\right) -Q(\alpha)P\left(\frac{x_1}{y_1}\right)\right|
& \geq \left|\alpha - \frac{x_1}{y_1}\right|\left(C_{13} - C_1^{-\mu}C_02^{r + 2}(r + 1)C_{12}^2\max\{1, |\alpha|\}^{2r}\right)\\
& \geq \left|\alpha - \frac{x_1}{y_1}\right|\left(C_{13} - C_1^{-\mu}C_02^{(d/2) + 2}((d/2) + 1)C_{12}^2\max\{1, |\alpha|\}^d\right)\\
& \geq \frac{C_{13}}{2}\left|\alpha - \frac{x_1}{y_1}\right|.
\end{align*}
Combining the above result with (\ref{eq:beta_minus_x2y2}), (\ref{eq:Q_alpha_upper_bound}) and (\ref{eq:Q_x1y1_upper_bound}) yields
$$
\left|\beta - \frac{x_2}{y_2}\right|
= \frac{|P(\alpha)Q(x_1/y_1) - Q(\alpha)P(x_1/y_1)|}{|Q(\alpha)Q(x_1/y_1)|}\\
> \frac{C_{13}}{2^{r + 2}(r + 1)C_{12}^2\max\{1, |\alpha|\}^{2r}}\left|\alpha - \frac{x_1}{y_1}\right|.
$$
By Proposition \ref{prop:alternative_gap_principle},
$$
H(x_2, y_2) \geq \frac{H(x_1, y_1)^r}{C_{15}\max\{H(P), H(Q)\}^{2r^2 + 3r}} \geq \frac{H(x_1, y_1)^r}{C_{15}C_{12}^{2r^2 + 3r}},
$$
where
\begin{equation} \label{eq:C15-upper-bound}
C_{15} = 2^{r^2}(r + 1)^{(3r^2 + 2r)/2} \leq 2^{d^2/4} ((d/2) + 1)^{(3d^2+4d)/8}.
\end{equation}
Consequently,
\begin{align*}
\left|\alpha - \frac{x_1}{y_1}\right|
& < \frac{2^{r + 2}(r + 1)C_{12}^2\max\{1, |\alpha|\}^{2r}}{C_{13}}\left|\beta - \frac{x_2}{y_2}\right|\\
& < \frac{2^{r + 2}(r + 1)C_{12}^2\max\{1, |\alpha|\}^{2r}}{C_{13}} \cdot \frac{C_0}{H(x_2, y_2)^\mu}\\
& \leq \frac{2^{r + 2}(r + 1)C_0 C_{12}^{(2r^2 + 3r)\mu + 2} C_{15}^\mu\max\{1, |\alpha|\}^{2r}}{C_{13}H(x_1, y_1)^{r\mu}}.
\end{align*}
Thus, we obtain an upper bound on $|\alpha - x_1/y_1|$. On the other hand, by \mbox{Lemma \ref{lem:liouville}}, the lower bound (\ref{eq:liouville_inequality}) holds. Combining upper and lower bounds,
$$
\frac{C_6}{H(x_1, y_1)^d} \leq \left|\alpha - \frac{x_1}{y_1}\right| < \frac{2^{r + 2}(r + 1)C_0 C_{12}^{(2r^2 + 3r)\mu + 2} C_{15}^\mu\max\{1, |\alpha|\}^{2r}}{C_{13}H(x_1, y_1)^{r\mu}}
$$
Since $\mu > (d/2) + 1$ and $r \geq 2$, we see that $r\mu - d > 0$, so
\begin{align*}
H(x_1, y_1)
& < \left(2^{r + 2}(r + 1)C_0(C_6C_{13})^{-1}C_{12}^{(2r^2 + 3r)\mu + 2} C_{15}^\mu \max\{1, |\alpha|\}^{2r}\right)^{1/(r\mu - d)}\\
& \leq \left(2^{d^2\mu/4} ((d/2) + 1)^{(3d^2+4d)\mu/8}C_0(C_6C_{13})^{-1}C_{12}^{(d^2 + 3d)\mu/2 + 2}\max\{1, |\alpha|\}^d\right)^{1/(2\mu - d)}\\
& = C.
\end{align*}
Notice how in the second-to-last inequality we have utilized the upper bound on $C_{15}$ given in (\ref{eq:C15-upper-bound}). Thus, if we choose $C_1$ so that $C_1 \geq C$, then $H(x_1, y_1) \geq C_1 \geq C$, and so we arrive to a contradiction.

\section{A Generalized Non-Archimedean Gap Principle} \label{sec:generalized_non-archimedean_gap_principle}

In this section we prove Theorem \ref{thm:non-archimedean_gap_principle}. Let $P, Q$ be a minimal pair for $(\alpha, \beta)$, and define $R(x, y) = P(x) + yQ(x)$, so that $R(\alpha, \beta) = 0$. Suppose that \mbox{$R(x_1/y_1, x_2/y_2) \neq 0$.} Then the following trivial lower bound holds:
\begin{align*}
\left|y_1^ry_2R\left(\frac{x_1}{y_1}, \frac{x_2}{y_2}\right)\right|_p
& \geq \frac{1}{|y_1^ry_2R(x_1/y_1, x_2/y_2)|}\\
& \geq \frac{1}{2(r + 1)\max\{H(P), H(Q)\}H(x_1, y_1)^rH(x_2, y_2)}\\
& \geq \frac{1}{2((d/2) + 1)C_{12}H(x_1, y_1)^{d/2}H(x_2, y_2)}.
\end{align*}
Let $c_\alpha$ and $c_\beta$ denote the leading coefficients of the minimal polynomials of $\alpha$ and $\beta$, respectively. Note that for each $(i, j) \in \{0, \ldots, r\} \times \{0, 1\}$ the $p$-adic number $c_\alpha^{r - i}c_\beta^{1 - j}D_{ij}R(\alpha, \beta)$ is an algebraic integer. Thus, its $p$-adic absolute value does not exceed one. Via the application of Taylor's Theorem we obtain the following upper bound:
\begin{align*}
\left|y_1^ry_2R\left(\frac{x_1}{y_1}, \frac{x_2}{y_2}\right)\right|_p
& \leq \max\limits_{(i, j) \neq (0, 0)}\left\{\left|D_{ij}R(\alpha, \beta)\right|_p \cdot |y_1\alpha - x_1|_p^i\cdot |y_2\beta -x_2|_p^j\right\}\\
& = \max\limits_{(i, j) \neq (0, 0)}\left\{\left|\frac{c_\alpha^{r - i}c_\beta^{1 - j}D_{ij}R(\alpha, \beta)}{c_\alpha^{r - i}c_\beta^{1 - j}}\right|_p \cdot |y_1\alpha - x_1|_p^i\cdot |y_2\beta -x_2|_p^j\right\}\\
& \leq |c_\alpha^rc_\beta|_p^{-1} \max\limits_{(i, j) \neq (0, 0)}\left\{|y_1c_\alpha\alpha - c_\alpha x_1|_p^i\cdot |y_2c_\beta\beta -c_\beta x_2|_p^j\right\}\\
& \leq c_\alpha^rc_\beta\max\{|y_1c_\alpha\alpha - c_\alpha x_1|_p, |y_2c_\beta\beta - c_\beta x_2|_p\}\\
& \leq c_\alpha^rc_\beta\max\{|c_\alpha|_p, |c_\beta|_p\}\max\{|y_1\alpha - x_1|_p, |y_2\beta - x_2|_p\}\\
& < \frac{C_0c_\alpha^rc_\beta}{H(x_1, y_1)^\mu}.
\end{align*}
Upon combining the upper and lower bounds, we obtain
$$
\frac{1}{2((d/2) + 1)C_{12}H(x_1, y_1)^{d/2}H(x_2, y_2)} < \frac{C_0c_\alpha^r c_\beta}{H(x_1, y_1)^\mu}.
$$
If we now set $C_4 = (d + 2)C_0C_{12}c_\alpha^{d/2}c_\beta$, then
$$
H(x_2, y_2) > C_4^{-1}H(x_1, y_1)^{\mu - d/2},
$$
which means that case 1 holds.

Suppose that $R(x_1/y_1, x_2/y_2) = 0$. If $r = 1$, then by definition $R(x, y) = (sx + t) - y(ux + v)$ for some integers $s, t, u, v$. Note that $sv - tu \neq 0$, for otherwise the number $\beta$ would have to be rational. Since \mbox{$R(\alpha, \beta) = 0$} and \mbox{$R(x_1/y_1, x_2/y_2) = 0$}, case 2 holds.

It remains to consider the case when $R(x_1/y_1, x_2/y_2) = 0$ and $r \geq 2$. We will prove that $H(x_1, y_1) < C$ for some positive number $C$, which depends only on $\alpha$ and $\beta$. By choosing $C_3$ so that $C_3 \geq C$, we then arrive to a contradiction.

First, we claim that by choosing $C_3$ so that
$$
C_3 \geq C_0^{1/\mu}
$$
we can ensure that $|y_1|_p \geq c_\alpha^{-1}$. This inequality clearly holds when $p$ does not divide $y_1$, so we assume that $p \mid y_1$. Since $x_1$ and $y_1$ are coprime, it must be the case that $p$ does not divide $x_1$. Suppose that $|y_1\alpha|_p \neq |x_1|_p$. Then it follows from the strong triangle inequality that
$$
|y_1\alpha - x_1|_p = \max\left\{|y_1\alpha|_p,\ |x_1|_p\right\} = \max\left\{|y_1\alpha|_p,\ 1\right\} \geq 1.
$$
Since $|y_1\alpha - x_1|_p < C_0H(x_1, y_1)^{-\mu}$, we find that $H(x_1, y_1) < C_0^{1/\mu} \leq C_3$, so we reach a contradiction. Thus, $|y_1\alpha|_p = |x_1|_p = 1$. Since $c_\alpha \alpha$ is an algebraic integer, it must be the case that $|c_\alpha \alpha|_p \leq 1$, so
$$
|y_1|_p = \left|\alpha^{-1}\right|_p \geq \left|c_\alpha\right|_p \geq c_\alpha^{-1},
$$
as claimed.

Now that we have chosen $C_3$ so that $|y_1|_p \geq c_\alpha^{-1}$, we turn our attention to the equation $R(x_1/y_1, x_2/y_2) = 0$, which implies that
$$
|x_2| = \frac{|P(x_1, y_1)|}{g} \quad \text{and} \quad |y_2| = \frac{|Q(x_1, y_1)|}{g},
$$
where $g = \gcd(P(x_1, y_1), Q(x_1, y_1))$. Consequently,
\begin{equation} \label{eq:non-archimedean_beta_minus_x2y2}
|y_2\beta - x_2|_p = \frac{|P(\alpha)Q(x_1, y_1) - Q(\alpha)P(x_1, y_1)|_p}{|gQ(\alpha)|_p}.
\end{equation}
Since $g$ is an integer, we have
\begin{equation} \label{eq:g_is_an_integer}
|g|_p \leq 1.
\end{equation}
Since $c_\alpha^r Q(\alpha)$ is an algebraic integer,
\begin{equation} \label{eq:non-archimedean_Q_alpha_bound}
|Q(\alpha)|_p \leq |c_\alpha|_p^{-r} \leq c_\alpha^r \leq c_\alpha^{d/2}.
\end{equation}

It remains to estimate $|P(\alpha)Q(x_1, y_1) - Q(\alpha)P(x_1, y_1)|_p$ from below. Before we proceed, note that for any $i$ the number
$$
c_\alpha^{2r-i-1}\left(P(\alpha)D_{i+1}Q(\alpha) - Q(\alpha)D_{i+1}P(\alpha)\right)
$$
is an algebraic integer, so its $p$-adic absolute value does not exceed one. Consequently,
\begin{align*}
\left|\frac{P(\alpha)D_{i+1}Q(\alpha) - Q(\alpha)D_{i+1}P(\alpha)}{c_\alpha^i}\right|_p
& = \left|\frac{c_\alpha^{2r-1-i}\left(P(\alpha)D_{i+1}Q(\alpha) - Q(\alpha)D_{i+1}P(\alpha)\right)}{c_\alpha^{2r-1}}\right|_p\\
& \leq |c_{\alpha}|_p^{-(2r-1)}\\
& \leq c_\alpha^{2r - 1}\\
& \leq c_\alpha^{d - 1}.
\end{align*}
Now, let $W = PQ' - QP'$ denote the Wronskian of $P$ and $Q$. By Taylor's Theorem and (\ref{eq:W_alpha_non-archimedean_lower_bound}),
\small
$$
\begin{array}{l}
\left|P(\alpha)Q(x_1, y_1) - Q(\alpha)P(x_1, y_1)\right|_p\\
= \left|P(\alpha)\sum\limits_{i = 0}^rD_iQ(\alpha)\left(x_1 - \alpha y_1\right)^iy_1^{r-i} - Q(\alpha)\sum\limits_{i = 0}^rD_iP(\alpha)\left(x_1 - \alpha y_1\right)^iy_1^{r-i}\right|_p\\
= \left|y_1\alpha - x_1\right|_p\left|\sum\limits_{i = 0}^{r-1}\left(P(\alpha)D_{i+1}Q(\alpha) - Q(\alpha)D_{i+1}P(\alpha)\right)\left(x_1 - \alpha y_1\right)^iy_1^{r-1-i}\right|_p\\
\geq \left|y_1\alpha - x_1\right|_p\left(|W(\alpha)y_1^{r - 1}|_p - |y_1\alpha - x_1|_p\max\limits_{i = 0, \ldots, r - 1}\left\{\left|\frac{P(\alpha)D_{i+1}Q(\alpha) - Q(\alpha)D_{i+1}P(\alpha)}{c_\alpha^i}\right|_p\right\}\right)\\
> \left|y_1\alpha - x_1\right|_p\left(|W(\alpha)y_1^{r - 1}|_p - \frac{C_0}{H(x_1, y_1)^\mu}c_\alpha^{2r - 1}\right)\\
\geq \left|y_1\alpha - x_1\right|_p\left(C_{14}c_\alpha^{-(r - 1)} - C_3^{-\mu}C_0c_\alpha^{2r - 1}\right),
\end{array}
\normalsize
$$
where the last inequality follows from $H(x_1, y_1) \geq C_3$, (\ref{eq:W_alpha_non-archimedean_lower_bound}) and $|y_1|_p \geq c_\alpha^{-1}$. Thus, if we choose $C_3$ so that
$$
C_3^\mu \geq 2C_0C_{14}^{-1}c_\alpha^{(3d-4)/2},
$$
then
$$
\left|P(\alpha)Q(x_1, y_1) - Q(\alpha)P(x_1, y_1)\right|_p > \left|y_1\alpha - x_1\right|_p\left(C_{14}c_\alpha^{-(r - 1)} - C_3^{-\mu}C_0c_\alpha^{2r - 1}\right) \geq \frac{C_{14}}{2c_\alpha^{(d-2)/2}}\left|y_1\alpha - x_1\right|_p.
$$
Combining this observation with (\ref{eq:non-archimedean_beta_minus_x2y2}), (\ref{eq:g_is_an_integer}) and (\ref{eq:non-archimedean_Q_alpha_bound}),
$$
\left|y_2\beta - x_2\right|_p = \frac{|P(\alpha)Q(x_1, y_1) - Q(\alpha)P(x_1, y_1)|_p}{|gQ(\alpha)|_p} > \frac{C_{14}}{2c_\alpha^{d - 1}}|y_1\alpha - x_1|_p.
$$
By Proposition \ref{prop:alternative_gap_principle}, 
$$
H(x_2, y_2) \geq \frac{H(x_1, y_1)^r}{C_{15}\max\{H(P), H(Q)\}^{2r^2 + 3r}} \geq \frac{H(x_1, y_1)^r}{C_{15}C_{12}^{2r^2 + 3r}},
$$
where $C_{15}$ satisfies the upper bound (\ref{eq:C15-upper-bound}). Consequently,
$$
\left|y_1\alpha - x_1\right|_p
< \frac{2c_\alpha^{d - 1}}{C_{14}}\left|y_2\beta - x_2\right|_p
< \frac{2c_\alpha^{d - 1}C_0}{C_{14}H(x_2, y_2)^\mu}
\leq \frac{2c_\alpha^{d - 1}C_0C_{12}^{(2r^2 + 3r)\mu}C_{15}^\mu}{C_{14}H(x_1, y_1)^{r\mu}}
$$
Thus, we obtain an upper bound on $|y_1\alpha - x_1|_p$. On the other hand, by Lemma \ref{lem:p-adic_liouville} we have the lower bound (\ref{eq:p-adic_liouville_inequality}). Combining upper and lower bounds,
$$
\frac{C_7}{H(x_1, y_1)^d} \leq \left|y_1\alpha - x_1\right|_p < \frac{2c_\alpha^{d-1}C_0C_{12}^{(2r^2 + 3r)\mu}C_{15}^\mu}{C_{14}H(x_1, y_1)^{r\mu}}
$$
Since $\mu > (d/2) + 1$ and $r \geq 2$, we have
\begin{align*}
H(x_1, y_1)
& < \left(2c_\alpha^{d-1}C_0C_7^{-1}C_{12}^{(2r^2 + 3r)\mu} C_{14}^{-1} C_{15}^\mu\right)^{1/(r\mu - d)}\\
& \leq \left(2^{d^2\mu/4} ((d/2) + 1)^{(3d^2+4d)\mu/8}c_\alpha^{d-1}C_0C_7^{-1}C_{12}^{(d^2+3d)\mu/2}C_{14}^{-1}\right)^{1/(2\mu - d)}\\
& = C.
\end{align*}
Notice how in the second-to-last inequality we have utilized the upper bound on $C_{15}$ given in (\ref{eq:C15-upper-bound}). Thus, if we choose $C_3$ so that $C_3 \geq C$, then $H(x_1, y_1) \geq C_3 \geq C$, and so we arrive to a contradiction.

\section{The Enhanced Automorphism Group} \label{sec:automorphisms}

In this section we establish several results about the enhanced automorphism group $\Aut' |F|$ of a binary form $F$. At the end we prove Proposition \ref{prop:automorphisms}, where we explain the relation between automorphisms of $F$ and the roots of $F(x, 1)$. 

\begin{lem} \label{lem:Aut_QF_is_finite}
Let $F \in \Z[x, y]$ be a binary form of degree $d \geq 3$ and nonzero discriminant $D(F)$. Then $\Aut_{\Q} |F|$ is $\GL_2(\Q)$-conjugate to one of the groups from \mbox{Table \ref{tab:finite_subgroups}}.
\end{lem}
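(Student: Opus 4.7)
The plan is to combine an elementary argument that $\Aut_\Q|F|$ sits inside $\GL_2(\Q)$ as a finite subgroup with the classical classification of finite subgroups of $\GL_2(\Q)$ up to $\GL_2(\Q)$-conjugacy. Table \ref{tab:finite_subgroups} should then be precisely the list of representatives produced by that classification.

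First, I would verify that every $M \in \Aut_\Q|F|$ is invertible, so that $\Aut_\Q|F| \subseteq \GL_2(\Q)$. If $M$ had rank at most one, then $(sx+uy, tx+vy)$ would factor through a single linear form, forcing $F_M(x,y)$ to be either identically zero or a nonzero constant times the $d$-th power of a linear form. However, $D(F) \neq 0$ together with $d \geq 3$ means that $F(x,1)$ has $d$ distinct roots, so $F$ itself is not such a $d$-th power. This contradicts $F_M = \pm F$. Next, each $M \in \Aut_\Q|F|$ acts on $\P^1(\overline{\Q})$ by the Möbius transformation $\sigma_M \colon [x\!:\!y] \mapsto [sx+uy\!:\!tx+vy]$, and the identity $F_M = \pm F$ shows that $\sigma_M$ permutes the $d$ distinct projective roots of $F$. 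Since a Möbius transformation is uniquely determined by its action on three distinct points and $d \geq 3$, the assignment $M \mapsto \sigma_M$ has kernel contained in the scalar matrices. A direct check shows that $\lambda I \in \Aut_\Q|F|$ forces $\lambda^d = \pm 1$, hence $\lambda \in \{\pm 1\}$ over $\Q$. Therefore $\Aut_\Q|F|$ embeds in $S_d \times \{\pm 1\}$ and is in particular finite.

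With finiteness in hand, I would appeal to the classification of finite subgroups of $\GL_2(\Q)$ up to $\GL_2(\Q)$-conjugacy. The standard averaging/lattice argument (choose any $\Z$-lattice $L \subset \Q^2$ and replace it by $\sum_{g \in G} gL$, which is $G$-stable) reduces the problem to finite subgroups of $\GL_2(\Z)$. An element $g$ of finite order $n$ in $\GL_2(\Q)$ has minimal polynomial a divisor of $x^n - 1$ whose irreducible factors have degree at most $2$, which forces $\varphi(n) \leq 2$, hence $n \in \{1, 2, 3, 4, 6\}$. A straightforward case analysis, first on the cyclic subgroup generated by an element of maximal order and then on the involutions that can normalize it, shows that up to $\GL_2(\Q)$-conjugacy the finite subgroups of $\GL_2(\Q)$ are exactly the cyclic groups $C_n$ for $n \in \{1, 2, 3, 4, 6\}$ together with the dihedral groups $D_n$ for $n \in \{2, 3, 4, 6\}$, which I expect to be the content of Table \ref{tab:finite_subgroups}.

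The main obstacle is the second step: although the classification of finite subgroups of $\GL_2(\Q)$ is classical, carrying it out carefully (choosing explicit representatives, distinguishing non-conjugate copies of the same abstract group, and confirming that the entries of Table \ref{tab:finite_subgroups} exhaust the list) is the technical heart of the argument. The first step, by contrast, is essentially a formal consequence of $D(F) \neq 0$ and $d \geq 3$.
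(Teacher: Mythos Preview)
Your argument is correct in outline, but the paper takes a much shorter route: its entire proof is the single sentence ``See \cite{newman}.'' In other words, the lemma is quoted as a classical fact about finite subgroups of $\GL_2(\Q)$, with Newman's reference doing all the work. Your proposal instead sketches why the result is true: first that $\Aut_\Q|F|$ is a finite subgroup of $\GL_2(\Q)$ (via the faithful action of $\operatorname{PGL}_2$ on the $d \geq 3$ distinct projective roots, with kernel $\{\pm I\}$), and second that finite subgroups of $\GL_2(\Q)$ are classified by reduction to $\GL_2(\Z)$ via an averaged lattice and the constraint $\varphi(n) \leq 2$ on element orders. This buys the reader a self-contained explanation at the cost of the case analysis you rightly flag as the technical heart.

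One small correction to your expected list: you omit $\bm D_1$, the order-$2$ group generated by $\left(\begin{smallmatrix}0&1\\1&0\end{smallmatrix}\right)$. Abstractly this is cyclic of order $2$, but it is not $\GL_2(\Q)$-conjugate to $\bm C_2 = \{\pm I\}$ (its generator has determinant $-1$, hence the two are distinguished already in $\operatorname{GL}_2/\operatorname{SL}_2$), and Table~\ref{tab:finite_subgroups} lists it separately. The full list is $\bm C_n$ and $\bm D_n$ for $n \in \{1,2,3,4,6\}$.
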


\begin{proof}
See \cite{newman}.
\end{proof}

\begin{table}[t]
\centering
\begin{tabular}{| l | l | l | l |}
\hline
Group & Generators & Group & Generators\\
\hline\rule{0pt}{4ex}
$\bm C_1$ &
$\begin{pmatrix}1 & 0\\0 & 1\end{pmatrix}$ &
$\bm D_1$ &
$\begin{pmatrix}0 & 1\\1 & 0\end{pmatrix}$\\\rule{0pt}{5ex}
$\bm C_2$ &
$\begin{pmatrix}-1 & 0\\0 & -1\end{pmatrix}$ &
$\bm D_2$ &
$\begin{pmatrix}0 & 1\\1 & 0\end{pmatrix}, \begin{pmatrix}-1 & 0\\0 & -1\end{pmatrix}$\\\rule{0pt}{5ex}
$\bm C_3$ &
$\begin{pmatrix}0 & 1\\-1 & -1\end{pmatrix}$ &
$\bm D_3$ &
$\begin{pmatrix}0 & 1\\1 & 0\end{pmatrix}, \begin{pmatrix}0 & 1\\-1 & -1\end{pmatrix}$\\\rule{0pt}{5ex}
$\bm C_4$ &
$\begin{pmatrix}0 & 1\\-1 & 0\end{pmatrix}$ &
$\bm D_4$ &
$\begin{pmatrix}0 & 1\\1 & 0\end{pmatrix}, \begin{pmatrix}0 & 1\\-1 & 0\end{pmatrix}$\\\rule{0pt}{5ex}
$\bm C_6$ &
$\begin{pmatrix}0 & -1\\1 & 1\end{pmatrix}$ &
$\bm D_6$ &
$\begin{pmatrix}0 & 1\\1 & 0\end{pmatrix}, \begin{pmatrix}0 & 1\\-1 & 1\end{pmatrix}$\\
\hline
\end{tabular}
\caption{Representatives of equivalence classes of finite subgroups of $\GL_2(\Q)$ under $\GL_2(\Q)$-conjugation.}
\label{tab:finite_subgroups}
\end{table}

\begin{lem} \label{lem:G_is_finite}
Let $F \in \Z[x, y]$ be a binary form of degree $d \geq 3$ and nonzero discriminant $D(F)$. Let $\Aut' |F|$ be as in (\ref{eq:G}). Then $\Aut' |F| \cong \bm C_n$ or $\Aut'|F| \cong \bm D_n$, where $n \in \{1, 2, 3, 4, 6, 8, 12\}$.
\end{lem}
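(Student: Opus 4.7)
The plan is to exhibit $\Aut'|F|$ as a central $\{\pm I\}$-extension of a finite subgroup of $\operatorname{PGL}_2(\Q)$ and then classify both the base and the extension. I would first verify that $\Aut'|F|$ is closed under multiplication (the product of $\frac{1}{\sqrt{n}}M_0$ and $\frac{1}{\sqrt{m}}N_0$ is $\frac{1}{\sqrt{nm}}M_0 N_0$ with $|\det(M_0N_0)|=nm$), note that every $M \in \Aut'|F|$ has $\det M \in \{\pm 1\}$, and observe that $-I \in \Aut'|F|$ automatically. The projection $\pi \colon \Aut'|F| \to \operatorname{PGL}_2(\overline{\Q})$, $M \mapsto [M]$, lands in $\operatorname{PGL}_2(\Q)$ (since each $M$ is a scalar multiple of an integer matrix), with kernel $\{\pm I\}$; finiteness of $\Aut'|F|$ follows from $\Aut'|F| \subseteq \Aut_{\overline{\Q}}|F|$, and any element of the latter permutes the $d \geq 3$ distinct roots of $F(x,1)$ on $\mathbb{P}^1(\overline{\Q})$, leaving finitely many possibilities for a M\"obius transformation. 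Set $\bar H := \pi(\Aut'|F|)$.

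Next I would classify finite subgroups $\bar H \leq \operatorname{PGL}_2(\Q)$. An element of projective order $n$ has eigenvalue ratio a primitive $n$-th root of unity inside an at-most-quadratic extension of $\Q$, forcing $\varphi(n) \leq 2$ and $n \in \{1,2,3,4,6\}$. Combining this with Klein's classification of finite subgroups of $\operatorname{PGL}_2(\CC)$, and ruling out $A_5$ (no order-$5$ elements) and $A_4, S_4$ (incompatible rationality of an order-$3$ element normalizing the Klein four-group over $\Q$), one concludes that $\bar H$ is cyclic $\bm C_n$ or dihedral $\bm D_n$ with $n \in \{1, 2, 3, 4, 6\}$. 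For the extension, I would use that any $M \in \Aut'|F|$ has $\tr M = \tr M_0/\sqrt{n} \in \R$, so its eigenvalues are either both real or complex conjugates. Combined with $\det M = \pm 1$, this forces that for $[M]$ of projective order $n \geq 3$ the eigenvalues are $e^{\pm i\pi/n}$ and hence $M^n = -I$, so $M$ has order $2n$ in $\Aut'|F|$. For $[\sigma]$ of projective order $2$, the lift satisfies either $\sigma^2 = I$ with $\det \sigma = -1$ or $\sigma^2 = -I$ with $\det \sigma = +1$, the choice governed by the sign of $\det \sigma_0$ for any integer representative $\sigma_0$.

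Assembling these observations, I would conclude: for $\bar H = \bm C_n$ with $n \in \{3,4,6\}$, $\Aut'|F| = \bm C_{2n}$; for $\bar H = \bm C_1$, $\Aut'|F| = \bm C_2$; for $\bar H \in \{\bm C_2, \bm D_1\}$, the lift is $\bm C_4$ or $\bm D_2$ depending on the determinant sign; for $\bar H = \bm D_n$ with $n \geq 3$, the rotation subgroup lifts to $\bm C_{2n}$ and all reflections must have $\det < 0$ (otherwise $\bar H$ would embed in $\operatorname{PSL}_2(\R)$, whose finite subgroups are cyclic, a contradiction), hence $\Aut'|F| = \bm D_{2n}$; and for $\bar H = \bm D_2$, the same $\operatorname{PSL}_2(\R)$ obstruction forces exactly two of the three involutions of $V_4$ to have $\det<0$, which combined with the order-$4$ lift of the third yields $\Aut'|F| = \bm D_4$. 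The main obstacle is this final step: showing that a dihedral $\bar H$ lifts to a dihedral (and not binary dihedral or quaternion) extension hinges on the determinant-sign analysis linking the algebraic structure of $\Aut'|F|$ to the geometric classification of involutions in $\operatorname{PGL}_2(\R)$.
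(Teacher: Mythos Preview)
Your argument is essentially correct but takes a genuinely different route from the paper's. The paper never passes to $\operatorname{PGL}_2(\Q)$; instead it observes that for any $M = n^{-1/2}M_0 \in \Aut'|F|$ one has $M^2 = n^{-1}M_0^2 \in \GL_2(\Q)$, so $M^2 \in \Aut_\Q|F|$. Lemma~\ref{lem:Aut_QF_is_finite} (Newman's classification) then bounds the element orders in $\Aut_\Q|F|$ by $6$, hence those in $\Aut'|F|$ lie in $\{1,2,3,4,6,8,12\}$. Finiteness is obtained via Schur's theorem on finitely generated periodic linear groups, and the cyclic/dihedral dichotomy from the classification of finite subgroups of $\GL_2(\R)$ (conjugate into $O_2(\R)$). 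The value of $n$ is then read off from the element-order constraint.

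Your approach trades the black-box citation of Lemma~\ref{lem:Aut_QF_is_finite} and Schur's theorem for a direct analysis of the quotient $\bar H \subset \operatorname{PGL}_2(\Q)$ and the central $\{\pm I\}$-extension. This is more self-contained and in fact yields sharper information: since $-I \in \Aut'|F|$ is central of order $2$, your case analysis actually excludes $\bm C_1, \bm C_3, \bm D_1, \bm D_3$ and pins down exactly which extension occurs over each possible $\bar H$. The cost is twofold. First, excluding $A_4$ (hence $S_4$) from $\operatorname{PGL}_2(\Q)$ is a nontrivial fact; your phrase ``incompatible rationality of an order-$3$ element normalizing the Klein four-group'' would need to be replaced by a concrete argument (e.g.\ Beauville's criterion that $A_4 \hookrightarrow \operatorname{PGL}_2(K)$ iff $-1$ is a sum of two squares in $K$). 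Second, in the dihedral extension step you need to verify not only that reflections have $\det = -1$ but also that $\sigma M \sigma^{-1} = M^{-1}$ rather than $-M^{-1}$ (a trace argument when $n \geq 3$), and in the $\bar H = \bm D_2$ case that the order-$4$ lift $C$ and an involution $A$ with $\det A = -1$ genuinely anticommute (which follows since a real matrix commuting with $C$ must be scalar). These points are implicit in your sketch but would need to be spelled out.
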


\begin{proof}
Note that $\Aut_{\Q} |F|$ is a subgroup of $\Aut' |F|$. Furthermore, for any $M \in \Aut' |F|$ we have $M^2 \in \Aut_{\mathbb Q}|F|$. By Lemma \ref{lem:Aut_QF_is_finite}, $\Aut_{\mathbb Q}|F|$ is finite, and so any $M \in \Aut' |F|$ has finite order. In fact, since the orders of elements in $\Aut_{\mathbb Q}|F|$ are $\{1, 2, 3, 4, 6\}$, the only possible orders of elements in $\Aut'|F|$ are $\{1, 2, 3, 4, 6, 8, 12\}$.

Next, recall a classical result that any finite subgroup of $\GL_2(\R)$ is $\GL_2(\R)$-conjugate to a finite subgroup of the orthogonal group $O_2(\R)$. %\footnote{For the proof see the discussion at \url{https://mathoverflow.net/questions/107902/finite-subgroups-of-sl-2r}.}
Since finite subgroups of $O_2(\R)$ correspond to rotations and reflections on a plane, we conclude that each finite subgroup of $\GL_2(\R)$, including $\Aut' |F|$, is isomorphic to either a cyclic group $\bm C_n$ of order $n$ or a dihedral group $\bm D_n$ of order $2n$.

Now suppose that $\Aut' |F|$ contains at least $25$ distinct elements $M_1, \ldots, M_{25}$. By Schur's Theorem \cite{curtis-reiner}, any finitely generated torsion subgroup of $\GL_n(\CC)$ is finite. Hence $\langle M_1, \ldots, M_{25}\rangle$ is a finite subgroup of $\GL_2(\R)$, so it is isomorphic to either $\bm C_n$ or $\bm D_n$ for some $n$. In the former case we see that $n \geq 25$, while in the latter case $n \geq 13$. In both cases we obtain a contradiction, since the largest order that an element of $\Aut' |F|$ can have is $12$. Therefore $\Aut' |F|$ contains at most $24$ elements.
\end{proof}

Let us give an example of a group of the form (\ref{eq:G}) that is not a subgroup of $\GL_2(\Q)$. Consider
$$
G = \left\langle
\begin{pmatrix}
0 & 1\\
1 & 0
\end{pmatrix},
\begin{pmatrix}
1/\sqrt 3 & 1/\sqrt 3\\
-1/\sqrt 3 & 2/\sqrt 3
\end{pmatrix}
\right\rangle.
$$
Then $G \cong \bm D_{12}$. If we choose coprime integers $a$ and $b$ so that $a \equiv 3b \pmod{10}$, then any (reciprocal) binary form
\begin{align*}
F(x, y)
& = a(x^{12} + y^{12}) -6axy(x^{10} + y^{10})\\
& + \frac{231a + 2b}{5}x^2y^2(x^8 + y^8) - (176a + 2b)x^3y^3(x^6 + y^6)\\
& + \frac{495a + 5b}{2}x^4y^4(x^4 + y^4) + 2bx^5y^5(x^2 + y^2)\\
& - \frac{1122a+29b}{5}x^6y^6
\end{align*}
will have integer coefficients and satisfy $F_M = F$ for any $M \in G$. Consequently, if $(x, y)$ is a solution to the Thue equation $F(x, y) = m$, then so are $(y, -x + y)$, $(-x + y, -x)$, $(-x, -y)$, \mbox{$(-y, x - y)$,} $(x - y, x), (y, x), (-x + y, y), (-x, -x + y), (-y, -x), (x-y, -y), (x, x-y)$. This phenomenon was observed by Stewart in \mbox{\cite[Section 6]{stewart2}} with respect to binary forms invariant under $\bm D_6$, which is a subgroup of $G$. In addition to these $12$ solutions, we have $F(x', y') = 729m$ for any $(x', y') \in \left\{(x + y, -x + 2y)\right.$, $(-x + 2y, -2x + y)$, $(-2x + y, -x - y)$, $(-x - y, x - 2y)$, $(x - 2y, 2x - y)$, $(2x - y, x + y)$, $(-x + 2y, x + y)$, $(-2x + y, -x + 2y)$, $(-x - y, -2x + y)$, $(x - 2y, -x - y)$, $(2x - y, x - 2y)$, $\left.(x + y, 2x - y)\right\}$.

\begin{prop} \label{prop:automorphisms}
Let $F(x, y) = c_dx^d + c_{d-1}x^{d-1}y + \cdots + c_0y^d \in \Z[x, y]$ be an irreducible binary form of degree $d \geq 3$. Let $\alpha_1, \ldots, \alpha_d$ be the roots of $F(x, 1)$. There exists an index $j \in \{1, \ldots, d\}$ such that
$$
\alpha_j = \frac{v\alpha_1 - u}{-t\alpha_1 + s}
$$
for some integers $s$, $t$, $u$ and $v$ if and only if the matrix
$$
M = \frac{1}{\sqrt{|sv - tu|}}
\begin{pmatrix}
s & u\\
t & v
\end{pmatrix}
$$
is an element of $\Aut'|F|$. Furthermore, if $M \in \Aut'|F|$, then $|sv - tu| = \left|\frac{F(s, t)}{c_d}\right|^{2/d}$.
\end{prop}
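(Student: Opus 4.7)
The plan is to reduce both directions to a single computational identity coming from the factorization $F(x, y) = c_d \prod_{i=1}^d (x - \alpha_i y)$. Expanding $F(sx + uy, tx + vy)$ and factoring out $(s - \alpha_i t)$ from the $i$-th linear form (possible because each $\alpha_i$ has degree $d \geq 3$ over $\Q$, which precludes $s - \alpha_i t = 0$ whenever $(s,t) \neq (0,0)$) yields the master identity
$$
F(sx + uy, tx + vy) \;=\; F(s, t) \prod_{i = 1}^d \bigl(x - \phi(\alpha_i)\, y\bigr), \qquad \phi(\alpha) = \frac{v\alpha - u}{s - t\alpha}. \quad (\star)
$$
The entire proposition then amounts to comparing $(\star)$ with the identity $F_M = \pm F$ and reading off both a permutation of the roots and the scalar-matching formula.

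For the forward direction, assume $\alpha_j = \phi(\alpha_1)$. First I would check that $sv - tu \neq 0$: a short calculation shows $sv = tu$ collapses $\phi$ to a rational constant, contradicting the fact that $\alpha_j$ is not in $\Q$. Since $\phi$ has rational coefficients, Galois equivariance gives $\sigma(\phi(\alpha_1)) = \phi(\sigma(\alpha_1))$ for every $\sigma \in \Gal(\overline{\Q}/\Q)$; because $\alpha_j$ is a root of the irreducible polynomial $F(x, 1)$, its Galois conjugates are precisely $\{\alpha_1, \ldots, \alpha_d\}$, so $\phi$ sends every $\alpha_i$ back into this root set, and injectivity of $\phi$ (from $sv - tu \neq 0$) promotes this to a permutation. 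Hence $\prod_i (x - \phi(\alpha_i) y) = F(x, y)/c_d$, and dividing $(\star)$ by $|sv - tu|^{d/2}$ gives $F_M = \lambda F$ with $\lambda = F(s, t)/\bigl(c_d\, |sv - tu|^{d/2}\bigr)$. To upgrade this to $M \in \Aut'|F|$, I must show $|\lambda| = 1$; for this I would invoke the discriminant transformation laws $D(F \circ A) = (\det A)^{d(d - 1)} D(F)$ and $D(\lambda F) = \lambda^{2(d - 1)} D(F)$ together with $D(F) \neq 0$ (from irreducibility) to deduce $|sv - tu|^d = (F(s, t)/c_d)^2$. This simultaneously proves the ``Furthermore'' identity $|sv - tu| = |F(s, t)/c_d|^{2/d}$ and forces $\lambda = \pm 1$, placing $M$ in $\Aut'|F|$.

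For the converse, assuming $M \in \Aut'|F|$ I have $F(sx + uy, tx + vy) = \pm |sv - tu|^{d/2} F(x, y)$; equating with $(\star)$ and comparing leading coefficients in $x$ immediately yields $|F(s, t)| = |c_d|\, |sv - tu|^{d/2}$ -- the ``Furthermore'' identity -- while cancellation forces the two monic polynomials $\prod_i (x - \phi(\alpha_i) y)$ and $\prod_i (x - \alpha_i y)$ to agree, so $\{\phi(\alpha_1), \ldots, \phi(\alpha_d)\} = \{\alpha_1, \ldots, \alpha_d\}$ as multisets and in particular $\alpha_j = \phi(\alpha_1)$ for some $j$. I expect the main obstacle to be the scalar-identification step in the forward direction: the root-permutation argument alone determines $\lambda$ only up to an undetermined nonzero real constant, and the cleanest route I see to $|\lambda| = 1$ is the discriminant transformation law applied to $(\star)$.
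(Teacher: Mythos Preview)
Your proposal is correct and follows essentially the same approach as the paper: the paper also expands $F(sx+uy,\,tx+vy)$ through the factorization $F(x,y)=c_d\prod_i(x-\alpha_i y)$, uses the Galois action on the roots (via irreducibility) to obtain the permutation, and then invokes the discriminant transformation law $D(F_M)=(\det M)^{d(d-1)}D(F)$ to pin down the scalar and conclude $|\det M|=1$. Your additional remark that $sv-tu\neq 0$ follows from the irrationality of $\alpha_j$ is a nice explicit check that the paper leaves implicit.
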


\begin{proof}
Suppose that there exists an index $j \in \{1, \ldots, d\}$ such that $\alpha_j = \frac{v\alpha_1 - u}{-t\alpha_1 + s}$ for some integers $s$, $t$, $u$ and $v$. Since $F(x, 1)$ is irreducible, its Galois group acts transitively on the roots $\alpha_1, \alpha_2, \ldots, \alpha_d$. Therefore,
$$
\frac{v\alpha_1 - u}{-t\alpha_1 + s}, \quad \frac{v\alpha_2 - u}{-t\alpha_2 + s}, \quad \ldots, \quad \frac{v\alpha_d - u}{-t\alpha_d + s}
$$
is a permutation of $\alpha_1, \ldots, \alpha_d$. Thus,
\begin{align*}
F(x, y)
& = c_d\prod\limits_{i = 1}^d\left(x - \frac{v\alpha_i - u}{-t\alpha_i + s}y\right)\\
& = \frac{c_d}{\prod_{i = 1}^d(s - t\alpha_i)}\prod\limits_{i = 1}^d\left((-t\alpha_i + s)x - (v\alpha_i - u)y\right)\\
& = \frac{c_d}{F(s, t)}F(sx + uy,\ tx + vy)\\
& = \pm \eta^dF(sx + uy,\ tx + vy)\\
& = \pm F_M(x, y),
\end{align*}
where $\eta = |c_d/F(s, t)|^{1/d}$ and $M = \eta\left(\begin{smallmatrix}s & u\\t & v\end{smallmatrix}\right)$. Since
$$
D(F_M) = (\det M)^{d(d - 1)}D(F)
$$
and $F_M = \pm F$, we see that $|D(F_M)| = |D(F)|$, so $|\det M| = 1$. Hence $|\eta|^2\cdot |sv - tu| = 1$, which leads us to the conclusion that $\eta = |\eta| = |sv - tu|^{-1/2}$ and $M \in \operatorname{Aut}'|F|$.

Conversely, suppose that $M = |sv - tu|^{-1/2}\left(\begin{smallmatrix}s & u\\t & v\end{smallmatrix}\right)$ is in $\Aut' |F|$. Then
\begin{align*}
\pm F(x, y) & = F_M(x, y)\\
& = \frac{c_d}{|sv - tu|^{d/2}}\prod\limits_{i = 1}^d(sx + uy - \alpha_i(tx + vy))\\
& = \frac{F(s, t)}{|sv - tu|^{d/2}}\prod\limits_{i = 1}^d\left(x - \frac{v\alpha_i - u}{-t\alpha_i + s}y\right).
\end{align*}
We see that the polynomial $F_M(x, 1)$ vanishes at $\frac{v\alpha_i - u}{-t\alpha_i + s}$ for $i = 1, \ldots, d$. Since $F_M = \pm F$, the polynomials $F_M(x, 1)$ and $F(x, 1)$ have the same roots, so there exists some index $j$ such that $\alpha_j = \frac{v\alpha_1 - u}{-t\alpha_1 + s}$. Furthermore, the leading coefficients of $F(x, 1)$ and $F_M(x, 1)$ must equal up to a sign, i.e., $|c_d| = \frac{|F(s, t)|}{|sv - tu|^{d/2}}$. Of course, this is the same as $|sv - tu| = \left|\frac{F(s, t)}{c_d}\right|^{2/d}$.
\end{proof}

\section{Counting Primitive Solutions of Large Height to Certain Thue Inequalities} \label{sec:thue-inequality}
In this section we prove Theorem \ref{thm:thue_large}. It follows from a more general result stated in Theorem \ref{thm:count_large_approximations}, where we count approximations $x/y$ of large height to distinct algebraic numbers $\alpha_1, \ldots, \alpha_n$ such that $\Q(\alpha_i) = \Q(\alpha_1)$ for all $i = 1, 2, \ldots, n$. In order to state the main result of this section we need to introduce the notion of an \emph{orbit}. For an irrational number $\alpha$, the \emph{orbit} of $\alpha$ is the set
$$
\orb(\alpha) = \left\{\frac{v\alpha - u}{-t\alpha + s}\ \colon\ s,t,u,v \in \Z,\ sv-tu \neq 0\right\}.
$$

\begin{thm} \label{thm:count_large_approximations}
Let $K = \CC$ or $\Q_p$, where $p$ is a rational prime, and denote the standard absolute value on $K$ by $|\quad|$. Let $\alpha_1 \in K$ be an algebraic number of degree $d \geq 3$ over $\Q$ and $\alpha_2, \alpha_3, \ldots, \alpha_n$ be distinct elements of $\Q(\alpha_1)$, different from $\alpha_1$, each of \mbox{degree $d$.} Let $\mu$ be such that $(d/2) + 1 < \mu < d$. Let $C_0$ be a real number such that $C_0 > (4e^A)^{-1}$, where
\begin{equation} \label{eq:A}
A = 500^2\left(\log \max\limits_{i = 1, \ldots, n}\{M(\alpha_i)\} + \frac{d}{2}\right).
\end{equation}
There exists a positive real number $C_{16}$, which depends on $\alpha_1, \alpha_2, \ldots, \alpha_n$, $\mu$ and $C_0$, with the following property. The total number of rationals $x/y$ in lowest terms, which satisfy \mbox{$H(x, y) \geq C_{16}$} and
\begin{equation} \label{eq:roths_inequality}
\left|\alpha_j - \frac{x}{y}\right| < \frac{C_0}{H(x, y)^\mu}
\end{equation}
for some $j \in \{1, 2, \ldots, n\}$ is less than
$$
\gamma\left\lfloor1 + \frac{11.51 + 1.5\log d + \log \mu}{\log(\mu - d/2)}\right\rfloor,
$$
where
\begin{equation} \label{eq:gamma}
\gamma = \max\{\gamma_1, \ldots, \gamma_n\}, \quad \gamma_i = \#\{j \colon \alpha_j \in \orb(\alpha_i)\}.
\end{equation}
\end{thm}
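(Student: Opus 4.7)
The plan is to combine the generalized gap principle (Theorem~\ref{thm:archimedean_gap_principle} when $K = \CC$, Theorem~\ref{thm:non-archimedean_gap_principle} when $K = \Q_p$) with the Thue--Siegel principle (Lemma~\ref{lem:thue-siegel}), following the classical Thue--Siegel counting strategy. First I would choose $C_{16}$ large enough, invoking Lemma~\ref{lem:unique} for each pair $(\alpha_j,\alpha_k)$, so that every solution $x/y$ satisfies~(\ref{eq:roths_inequality}) for a \emph{unique} index, which I denote by $j(x/y)$. (When $K = \CC$, any non-real $\alpha_j$ satisfies $|\alpha_j - x/y| \geq |\im \alpha_j| > 0$ and so contributes no solutions once $C_{16}$ is large.)

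Next I would introduce an equivalence relation on the solution set: declare $x_1/y_1 \sim x_2/y_2$ whenever there exist integers $s,t,u,v$ with $sv - tu \neq 0$ satisfying
\[
\alpha_{j(x_2/y_2)} = \frac{s\,\alpha_{j(x_1/y_1)} + t}{u\,\alpha_{j(x_1/y_1)} + v}
\quad\text{and}\quad
\frac{x_2}{y_2} = \frac{sx_1 + ty_1}{ux_1 + vy_1}.
\]
Reflexivity is trivial, symmetry follows by replacing the matrix $\bigl(\begin{smallmatrix}s & t\\ u & v\end{smallmatrix}\bigr)$ by its adjugate (still integral and inducing the same linear fractional transformation projectively), and transitivity by composition. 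All members of a given class project into a single LFT-orbit among $\{\alpha_1, \ldots, \alpha_n\}$, of cardinality at most $\gamma$. Because $d \geq 3$, the equation $M\alpha_j = \alpha_j$ reduces to $u\alpha_j^2 + (v - s)\alpha_j - t = 0$, forcing $u = t = 0$ and $v = s$, so the integer-LFT stabilizer of each $\alpha_j$ is trivial; consequently each class contains at most one solution per $\alpha_j$ in its orbit, hence at most $\gamma$ solutions altogether.

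Let $K$ denote the number of classes. Choose a representative of smallest height from each one and order them $H^{(1)} \leq \cdots \leq H^{(K)}$. For each consecutive pair, Case~2 of the gap principle would place the two representatives into a common class, contradicting our choice, so Case~1 applies and yields $H^{(k+1)} > C^{-1}(H^{(k)})^{\mu - d/2}$ for a constant $C$ depending only on $\alpha_1, \ldots, \alpha_n, \mu, C_0$. Iterating,
\[
\log H^{(K)} \geq (\mu - d/2)^{K-1}\log H^{(1)} - (K-1)\log C.
\]
For the matching upper bound I apply Lemma~\ref{lem:thue-siegel} to the extreme pair $(x^{(1)}/y^{(1)}, x^{(K)}/y^{(K)})$, picking $t \in (0, \sqrt{2/d})$ maximal subject to $t^2/(2 - dt^2) \leq 500^2$ (so that $A_i \leq A$) and setting $\tau = t - 2/\mu$ (so that $\lambda = \mu$). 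Since $\mu > \lambda$, the inequality $C_0/H^\mu < (4e^AH)^{-\lambda}$ holds once $C_{16}$ is chosen large enough, verifying the Thue--Siegel hypothesis for all representatives. The lemma then delivers $\log H^{(K)} \leq \delta^{-1}\log H^{(1)} + O(1)$ with $\delta = (dt^2 + \tau^2 - 2)/(d-1)$. Combining the two bounds and absorbing the $O(K)$ and $O(1)$ error terms into the hypothesis $H^{(1)} \geq C_{16}$ gives $(\mu - d/2)^{K-1} \leq \delta^{-1}$, hence $K \leq 1 + (\log \delta^{-1})/\log(\mu - d/2)$; multiplying by the class-size bound $\gamma$ produces the stated total.

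The main obstacle will be matching the explicit numerical constant. The structural argument above is classical, but pinning $\log \delta^{-1}$ down to the form $11.51 + 1.5\log d + \log\mu$ requires careful asymptotic bookkeeping of $\delta$ at the extremal choices $t^2 = 2\cdot 500^2/(1 + 500^2 d)$ and $\tau = t - 2/\mu$, together with tracking the additive contribution of the factor $4e^A$ in the Thue--Siegel conclusion and its absorption into the lower-bound hypothesis $H^{(1)} \geq C_{16}$.
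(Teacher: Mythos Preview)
Your overall architecture is exactly the paper's: reduce to equivalence classes under the simultaneous linear-fractional relation, bound each class by $\gamma$, then sandwich the extreme representatives between an iterated gap-principle lower bound and a Thue--Siegel upper bound. Two points need correction, one minor and one genuine.

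\textbf{Minor slip in the iteration.} From $\log H^{(k+1)} > E\log H^{(k)} - \log C$ with $E = \mu - d/2 > 1$ you do not get $\log H^{(K)} \geq E^{K-1}\log H^{(1)} - (K-1)\log C$; the error term is the geometric sum $(1 + E + \cdots + E^{K-2})\log C = \frac{E^{K-1}-1}{E-1}\log C$, which is what the paper uses. Your version subtracts too little and is therefore false as stated, though the correct version still combines with the upper bound in the same way (the paper absorbs the factor $\frac{1}{E-1}\log C$ by enlarging $C_{16}$).

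\textbf{Genuine gap in the Thue--Siegel parameters.} You write ``setting $\tau = t - 2/\mu$ (so that $\lambda = \mu$)'' and then in the very next sentence invoke ``since $\mu > \lambda$'' to force $C_0/H^\mu < (4e^A H)^{-\lambda}$ for large $H$. These two assertions contradict each other: with $\lambda = \mu$ the desired inequality becomes $C_0 < (4e^A)^{-\mu}$, which is incompatible with the hypothesis $C_0 > (4e^A)^{-1}$ and cannot be rescued by enlarging $C_{16}$. The paper avoids this by choosing $\tau$ differently: with $a = 1/500$, it sets $t = \sqrt{2/(d+a^2)}$ and $\tau = 2at$, so that $\lambda = 2/((1-2a)t) < 1.42\sqrt{d}$. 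Since $\mu > d/2 + 1 > 1.42\sqrt{d}$ for $d \geq 3$, one then has $\mu - \lambda > 0$ and the hypothesis of Lemma~\ref{lem:thue-siegel} is met once $C_{16} \geq C_0^{1/(\mu - 1.42\sqrt d)}(4e^A)^{1.42\sqrt d/(\mu - 1.42\sqrt d)}$. With this choice one computes $\delta^{-1} < 41667\,d^2$, and the remaining arithmetic (bounding $\log(4e^{A_1})/\log C_{16}$ via the lower bound on $C_0$ and simplifying) yields exactly the constant $11.51 + 1.5\log d + \log\mu$. Your anticipated ``obstacle'' is thus not merely numerical bookkeeping: the parameter choice $\tau = t - 2/\mu$ must be abandoned before any bookkeeping can begin.
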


Let us see why Theorem \ref{thm:thue_large} follows from Theorem \ref{thm:count_large_approximations}.

\begin{proof}[Proof of Theorem \ref{thm:thue_large}]
Let $\alpha_1, \alpha_2, \ldots, \alpha_d$ be the roots of $F(x, 1)$. Notice that, since $F(x, y)$ is irreducible, the roots of $F(1, x)$ are given by $\alpha_1^{-1}, \ldots, \alpha_d^{-1}$. Furthermore, since the field extension $\mathbb Q(\alpha)/\mathbb Q$ is Galois, we have $\mathbb Q(\alpha_i) = \mathbb Q(\alpha_1)$ for all $i = 1, \ldots, d$.

Choose $C_5$ so that
$$
C_5^{d - \mu} > \frac{2^{d-1}d^{(d-1)/2}M(F)^{d-2}m}{|D(F)|^{1/2}}.
$$
Let $(x, y)$ be a primitive solution to (\ref{eq:thue-inequality}) such that $H(x, y) \geq C_5$. Then it follows from the result of Lewis and Mahler stated in Lemma \ref{lem:lewis-mahler} that there exists an index $j \in \{1, 2, \ldots, d\}$ such that
$$
\min\left\{\left|\alpha_j - \frac{x}{y}\right|, \left|\alpha_j^{-1} - \frac{y}{x}\right|\right\} \leq \frac{2^{d-1}d^{(d-1)/2}M(F)^{d-2}m}{|D(F)|^{1/2}H(x, y)^d} < \frac{1}{H(x, y)^\mu}.
$$
Next, adjust the choice of $C_5$ so that Theorem \ref{thm:count_large_approximations} applies:
$$
C_5 \geq \max\left\{C_{16}(\alpha_1, \ldots, \alpha_d, \mu, C_0),\ C_{16}(\alpha_1^{-1}, \ldots, \alpha_d^{-1}, \mu, C_0)\right\},
$$
where $C_0 = 1$. If we let $\gamma$ be as in (\ref{eq:gamma}), then it follows from Theorem \ref{thm:count_large_approximations} that $x/y$ is one of at most
$$
2\gamma\left\lfloor1 + \frac{11.51 + 1.5\log d + \log \mu}{\log(\mu - d/2)}\right\rfloor
$$
rationals in lowest terms that satisfy either of the two inequalities
$$
\left|\alpha_j - \frac{x}{y}\right| < \frac{C_0}{H(x, y)^\mu}, \quad \left|\alpha_j^{-1} - \frac{y}{x}\right| < \frac{C_0}{H(x, y)^\mu}.
$$
It now follows from Proposition \ref{prop:automorphisms} that $\gamma \leq \frac{\Aut' |F|}{2}$. The division by $2$ appears due to the presence of the matrix $\left(\begin{smallmatrix}-1 & 0\\0 & -1\end{smallmatrix}\right)$ in $\Aut' |F|$, which maps $(x, y)$ to $(-x, -y)$.
\end{proof}

We conclude this section with the proof of Theorem \ref{thm:count_large_approximations}.

\begin{proof}[Proof of Theorem \ref{thm:count_large_approximations}]
Throughout the proof we will be adjusting our choice of $C_{16}$ four times. First, let $C_{16} \geq C_{11}$, where the positive real number $C_{11}$ is defined in Corollary \ref{cor:unique}. Then it follows from Lemma \ref{cor:unique} that for each $x/y$ satisfying (\ref{eq:roths_inequality}) the index $j \in \{1, 2, \ldots, n\}$ is unique.

Let $x_1/y_1, x_2/y_2, \ldots, x_\ell/y_\ell$ be the list of rational numbers in lowest terms that satisfy the following conditions.
\begin{enumerate}
\item $C_{16} \leq H(x_1, y_1) \leq H(x_2, y_2) \leq \ldots \leq H(x_\ell, y_\ell)$.

\item $\gcd(x_j, y_j) = 1$ for all $j = 1, 2, \ldots, \ell$.

\item For each $j \in \{1, 2, \ldots, \ell\}$, there exists the index $i_j \in \{1, 2, \ldots, n\}$ such that
$$
\left|\alpha_{i_j} - \frac{x_j}{y_j}\right| < \frac{C_0}{H(x_j, y_j)^\mu}.
$$
By the discussion above, this index is unique.

\item For every $j, k \in \{1, 2, \ldots, \ell\}$, if $\alpha_{i_k} \in \orb(\alpha_{i_j})$, i.e.,
$$
\alpha_{i_k} = \frac{s\alpha_{i_j} + t}{u\alpha_{i_j} + v}
$$
for some integers $s$, $t$, $u$ and $v$, then
$$
\frac{x_k}{y_k} \neq \frac{sx_j + ty_j}{ux_j + vy_j}.
$$
\end{enumerate}
Due to the fourth condition this list need not be uniquely defined. This fact, however, does not affect our estimates. The fourth property requires additional clarification: to each rational approximation in the list
$$
\frac{x_1}{y_1}, \quad \frac{x_2}{y_2}, \quad \ldots, \quad \frac{x_\ell}{y_\ell}
$$
correspond several rational approximations, which we call \emph{derived}. To be more precise, from $x_j/y_j$ one can naturally construct a (possibly bad) rational approximation to arbitrary $\alpha \in \orb(\alpha_{i_j})$ as follows. Let
$$
\alpha = \frac{s\alpha_{i_j} + t}{u\alpha_{i_j} + v} \quad \textrm{and} \quad \frac{x_j'}{y_j'} = \frac{sx_j + ty_j}{ux_j + vy_j}
$$
for some integers $s$, $t$, $u$ and $v$. Then
$$
\alpha - \frac{x_j'}{y_j'} = \frac{tu - sv}{(u\alpha_{i_j} + v)(u(x_j/y_j) + v)}\left(\alpha_{i_j} - \frac{x_j}{y_j}\right),
$$
so rational approximations to $\alpha$ and $\alpha_{i_j}$ are connected. Thus, by imposing condition (4), we insist that $x_j'/y_j'$ does not appear in the list $x_1/y_1, x_2/y_2, \ldots, x_\ell/y_\ell$.

In order to account for the presence of derived rational approximations, we introduce the value $\gamma_i$ defined in (\ref{eq:gamma}). Note that the value $\gamma_{i_j}$ is equal to the number of rational approximations derived from $x_j/y_j$, including $x_j/y_j$ itself. Consequently, if we let $N$ denote the total number of rationals satisfying the conditions specified in the hypothesis, then $N$ does not exceed $\sum_{j = 1}^\ell \gamma_{i_j}$. Therefore,
$$
N \leq \sum\limits_{j = 1}^\ell\gamma_{i_j} \leq \gamma\ell,
$$
where $\gamma$ is defined in (\ref{eq:gamma}). Thus, it remains to estimate $\ell$.

To derive an upper bound on $\ell$, we begin by applying a generalized gap principle to the ordered pair $(\alpha_{i_k}, \alpha_{i_{k+1}})$. Choose $C_{16}$ and define $C$ as follows:
$$
C_{16} \geq \max\limits_{j, k}\{C_1(\alpha_j, \alpha_k, \mu, C_0),\ C_3(\alpha_j, \alpha_k, \mu, C_0)\},
$$
$$
C = \max\limits_{j, k}\{C_2(\alpha_j, \alpha_k, \mu, C_0),\ C_4(\alpha_j, \alpha_k, \mu, C_0)\},
$$
where the positive real numbers $C_1$, $C_2$, $C_3$ and $C_4$ are taken from Theorems \ref{thm:archimedean_gap_principle} and \ref{thm:non-archimedean_gap_principle}, respectively. Note that if $K = \Q_p$, then $|y_k| \leq 1$, and so
$$
|y_k\alpha_{i_k} - x_k| = |y_k|\cdot \left|\alpha_{i_k} - \frac{x_k}{y_k}\right| < \frac{C_0}{H(x_k, y_k)^\mu}.
$$
Analogously,
$$
|y_{k + 1}\alpha_{i_{k + 1}} - x_{k + 1}| < \frac{C_0}{H(x_{k + 1}, y_{k + 1})^\mu}.
$$
It follows from Theorems \ref{thm:archimedean_gap_principle} and \ref{thm:non-archimedean_gap_principle} that, for every $k \in \{1, 2, \ldots, \ell - 1\}$,
$$
H(x_{k + 1}, y_{k + 1}) > C^{-1}H(x_k, y_k)^E,
$$
where
\begin{equation}
E = \mu - d/2.
\end{equation}
Notice that case 2 in the aforementioned theorems is impossible due to the fact that the list $x_1/y_1, \ldots, x_\ell/y_\ell$ does not contain derived rational approximations. Consequently,
\begin{align*}
\log H(x_\ell, y_\ell)
& > E\log H(x_{\ell - 1}, y_{\ell - 1}) - \log C\\
& > E^2\log H(x_{\ell - 2}, y_{\ell - 2}) - (1 + E)\log C\\
& > \cdots\\
& > E^{\ell - 1}\log H(x_1, y_1) - (1 + E + \cdots + E^{\ell - 2})\log C.
\end{align*}
Thus, we obtain the following lower bound on $\log H(x_\ell, y_\ell)$:
\begin{equation} \label{eq:qell_bound}
\log H(x_\ell, y_\ell) > E^{\ell - 1}\log H(x_1, y_1) - \frac{E^{\ell - 1} - 1}{E - 1}\log C.
\end{equation}

Next, we apply the Thue-Siegel principle from Lemma \ref{lem:thue-siegel} to the pair $(\alpha, \beta) = (\alpha_{i_1}, \alpha_{i_\ell})$. Observe that, since all $\alpha_i$'s have degree $d$, we have $\Q(\alpha_{i_1}) = \Q(\alpha_{i_\ell})$, so $\alpha_{i_{\ell}} \in \Q(\alpha_{i_1})$. For $a = 1/500$, set
$$
t = \sqrt{\frac{2}{d + a^2}}, \quad \tau = 2at.
$$
Then
$$
\lambda = \frac{2}{t - \tau} = \frac{2}{(1 - 2a)t} < 1.42 \sqrt d.
$$
Further,
$$
\frac{t^2}{2 - dt^2} = \frac{1}{a^2} = 500^2,
$$
$$
A_1 = 500^2\left(\log M(\alpha_{i_1}) + \frac{d}{2}\right), \quad A_\ell = 500^2\left(\log M(\alpha_{i_\ell}) + \frac{d}{2}\right),
$$
$$
\delta = \frac{dt^2 + \tau^2 - 2}{d - 1} = \frac{6a^2}{(d + a^2)(d - 1)}.
$$
Note that
\begin{equation} \label{eq:delta_inverse_inequality}
\delta^{-1} < 41667d^2.
\end{equation}
We further adjust our definition of $C_{16}$ by choosing it so that
\begin{equation} \label{eq:large_C1_third_adjustment}
C_{16} \geq C_0^{\frac{1}{\mu - 1.42\sqrt d}}\left(4e^A\right)^{\frac{1.42\sqrt d}{\mu - 1.42\sqrt d}},
\end{equation}
where $A$ is defined in (\ref{eq:A}). Now with the help of inequalities $\lambda < 1.42\sqrt d$ and  $H(x_j, y_j) \geq C_{16}$ we obtain
$$
\left|\alpha_{i_j} - \frac{x_j}{y_j}\right| < \frac{C_0}{H(x_j, y_j)^\mu} \leq \frac{1}{\left(4e^AH(x_j, y_j)\right)^{1.42 \sqrt d}}  < \frac{1}{(4e^A H(x_j, y_j))^\lambda},
$$
so that the hypothesis of Lemma \ref{lem:thue-siegel} is satisfied. Thus, we arrive at the conclusion that
\begin{align*}
\log H(x_\ell, y_\ell)
& \leq \delta^{-1}\left(\log(4e^{A_1}) + \log H(x_1, y_1)\right) - \log(4e^{A_\ell})\\
& < 41667d^2\left(\log(4e^{A_1}) + \log H(x_1, y_1)\right),
\end{align*}
where the last inequality follows from (\ref{eq:delta_inverse_inequality}). Thus
$$
\log H(x_\ell, y_\ell) < 41667d^2\left(\log\left(4e^{A_1}\right) + \log H(x_1, y_1)\right).
$$
We combine the above upper bound on $\log H(x_\ell, y_\ell)$ with the lower bound given in (\ref{eq:qell_bound}):
$$
E^{\ell - 1}\log H(x_1, y_1) - \frac{E^{\ell - 1} - 1}{E - 1}\log C < 41667d^2\left(\log\left(4e^{A_1}\right) + \log H(x_1, y_1)\right).
$$
Reordering the terms yields
\begin{equation} \label{eq:simplifying}
\left(E^{\ell - 1} - 41667d^2\right)\log H(x_1, y_1) - \frac{E^{\ell - 1} - 1}{E - 1}\log C < 41667d^2\log\left(4e^{A_1}\right).
\end{equation}
Let us assume that
$$
\ell \geq 1 + \frac{\log(41667d^2)}{\log(\mu - d/2)},
$$
for otherwise the statement of our theorem holds. Then $E^{\ell - 1} \geq 41667d^2$, so we may use the inequality $H(x_1, y_1) \geq C_{16}$ to replace $H(x_1, y_1)$ with $C_{16}$ in (\ref{eq:simplifying}):
$$
\left(E^{\ell - 1} - 41667d^2\right)\log C_{16} - \frac{E^{\ell - 1} - 1}{E - 1}\log C < 41667d^2\log\left(4e^{A_1}\right).
$$
Since $E = \mu - d/2$,
$$
(\mu - d/2)^{\ell - 1}\left(\log C_{16} - \frac{\log C}{E - 1}\right) < 41667d^2\log C_{16} + 41667d^2\log\left(4e^{A_1}\right) + \frac{\log C}{E - 1}.
$$
We make a final adjustment to $C_{16}$ by choosing it so that
\begin{equation} \label{eq:large_C1_fourth_adjustment}
C_{16} \geq C^{2/(E - 1)}.
\end{equation}
Then
$$
(\mu - 0.5d)^{\ell - 1}\frac{\log C_{16}}{2} < \left(41667d^2 + \frac{1}{2}\right)\log C_{16} + 41667d^2\log\left(4e^{A_1}\right),
$$
leading us to a conclusion
\begin{equation} \label{eq:will-continue}
(\mu - 0.5d)^{\ell - 1} < 1 + 83334d^2\left(1 + \frac{\log\left(4e^{A_1}\right)}{\log C_{16}}\right).
\end{equation}
By our choice of $C_{16}$,
$$
\log C_{16} \geq \frac{1}{\mu - 1.42\sqrt d}\log C_0 + \frac{1.42\sqrt d}{\mu - 1.42\sqrt d}\log(4e^A),
$$
which means that
$$
\frac{\log(4e^A)}{\log C_{16}} \leq \frac{\mu - 1.42\sqrt d}{1.42\sqrt d + \log C_0/\log(4e^A)} < \frac{\mu - 1.42\sqrt d}{1.42\sqrt d - 1},
$$
where the last inequality follows from the fact that $C_0 > (4e^A)^{-1}$. Plugging the above inequality into (\ref{eq:will-continue}), we obtain
$$
(\mu - 0.5d)^{\ell - 1} < 1 + 83334d^2\left(1 + \frac{\mu - 1.42\sqrt d}{1.42\sqrt d - 1}\right) = 1 + 83334d^2\frac{\mu - 1}{1.42\sqrt d - 1} \leq 1 + 98896d^{3/2}\mu,
$$
where the last inequality follows from $d \geq 3$. We conclude that
$$
\ell < 1 + \frac{\log(98897d^{3/2}\mu)}{\log(\mu - d/2)} < 1 + \frac{11.51 + 1.5\log d + \log \mu}{\log(\mu - d/2)}.
$$
The result follows once we multiply the right-hand side by the constant $\gamma$ defined in (\ref{eq:gamma}).
\end{proof}

\section*{Acknowledgements}

The author is grateful to Prof.\ Cameron L.\ Stewart for his wise supervision as well as to the anonymous referee for their excellent suggestions on how to improve the article.

\bibliography{mosunov-gap-principle}
\bibliographystyle{plain}
\end{document}